\documentclass[12pt]{article}
\usepackage{amsmath, amsfonts, amsthm, amssymb, color, hyperref, extarrows, enumitem, nicefrac,blindtext, mathtools, cite, anyfontsize, comment, appendix}
\usepackage{tikz}
\usetikzlibrary{decorations.pathreplacing, arrows.meta}
\usepackage{placeins}

\newtheorem{theorem}{Theorem}[section]
\newtheorem{definition}[theorem]{Definition}

\newtheorem{lemma}[theorem]{Lemma}
\newtheorem{proposition}[theorem]{Proposition}

\numberwithin{equation}{section}

\newtheorem{remark}[theorem]{Remark}

\newtheorem{example}{Example}

\usepackage[hyperpageref]{backref}

\usepackage{cite}

\hypersetup{
	colorlinks   = true,
	citecolor    = magenta}
	
\begin{document}

\title{\textbf{\Large Constructing bounded pseudoconvex domains of finite D'Angelo type with prescribed weak loci}}
\author{Martino Fassina, \ \ Yifei Pan\ \  and \ \ Yuan Zhang}
\date{}

\maketitle

\begin{abstract}
We construct  smoothly  bounded pseudoconvex or convex domains in $\mathbb{C}^n$ whose weakly pseudoconvex loci realize    prescribed closed sets.
\end{abstract}

\renewcommand{\thefootnote}{\fnsymbol{footnote}}
\footnotetext{\hspace*{-7mm}
\begin{tabular}{@{}r@{}p{16.5cm}@{}}
& 2020 Mathematics Subject Classification. Primary  32T27; Secondary 32T25. \\
& Key words and phrases. Weakly pseudoconvex locus, D'Angelo type, 
Levi determinant,  convexity. 
\end{tabular}}

\section{Introduction}

A fundamental problem in several complex variables and complex geometry is to understand the geometry and analysis of weakly pseudoconvex boundary points of pseudoconvex domains. One of the central invariants in this direction is D'Angelo type, which measures the maximal order of contact between the boundary and germs of holomorphic curves passing through a given boundary point.

For  smooth hypersurfaces locally defined by real-analytic functions, the structure of points of infinite D'Angelo type is rigidly constrained. A landmark theorem of D'Angelo \cite{DA82} states that  the D'Angelo type  at a   point of such a hypersurface  is infinite if and only if the hypersurface  contains a  germ of a nonconstant  holomorphic curve through that point. On the other hand, according to a global theorem of Diederich--Forn\ae ss \cite{DF78}, every compact real-analytic variety in $\mathbb C^n$  contains no such germ. Combining these two facts, one obtains that every smoothly bounded pseudoconvex domain with real-analytic boundary is of finite D'Angelo type at every boundary point.     %Thus, in the real-analytic category, weakly pseudoconvex points may occur, but they cannot be of infinite type; equivalently, the boundary cannot contain germs of holomorphic curves.

  While real-analyticity forces infinite-type points to arise from complex analytic varieties, this characterization fails for general smooth hypersurfaces. Nevertheless, D'Angelo proved in \cite%[Theorem 4.11]
  {DA82} that the set of points of finite  type is open on the hypersurfaces. In particular, the condition that the  D'Angelo type is equal to  2 corresponds to  Levi non-degeneracy   and hence is    open. For pseudoconvex domains, this is precisely strict pseudoconvexity. The weakly pseudoconvex locus is the set of boundary points at which strict
pseudoconvexity fails. This locus is important not only geometrically  but
also analytically: in the $\bar\partial$-Neumann problem, the availability
of local subelliptic estimates depends sensitively on its size, flatness, and
structure. The role of the weakly pseudoconvex locus is also evident in the celebrated Diederich--Forn\ae ss worm domain \cite{DF76}, which has served as a fundamental example permeating the field of several complex variables and showing that smooth pseudoconvexity alone does not guarantee the expected global analytic behavior. Its weakly pseudoconvex boundary structure is closely tied to phenomena such as the failure of Stein neighborhood bases and the irregularity of canonical operators.

In a related work, the second author, jointly with Shao, Shi and Wu \cite{PSSW}, has shown that, given a smoothly bounded real-analytic pseudoconvex domain $\Omega$, there exists a smooth  perturbation  $\Omega_\epsilon$ of $\Omega$ whose weakly pseudoconvex locus coincides with that of $\Omega$.  Moreover, $\Omega_\epsilon$ is strictly pseudoconvex and real-analytic away from this locus. At each point of this locus, the boundary of $\Omega_\epsilon$ fails to be real-analytic, while the corresponding D'Angelo type coincides with that of the original domain.
%that is strictly pseudoconvex and real-analytic away from  the weakly pseudoconvex locus of $\Omega$. %so that its weakly pseudoconvex locus is identical to that of $\Omega$. 
%Moreover,   at each point of this  locus, the boundary of $\Omega_\epsilon$ fails to be real-analytic, while  the corresponding  D'Angelo type  coincides with that of the original domain $\Omega$.  %Namely,  a pre-existing weakly pseudoconvex locus in the real-analytic setting can be preserved under a suitable flat perturbation,  with no change in its finite-type geometry.

Unlike  the rigid  analytic structure of weakly pseudoconvex loci in the real-analytic category, smooth pseudoconvex boundaries may exhibit substantially greater flexibility.  
 The purpose of this paper is to address a complementary realization phenomenon:  in the smooth category, weakly pseudoconvex loci can be
prescribed a priori with considerable freedom, including sets that need not arise from real-analytic geometry,  while retaining finite   type control. %More precisely, we construct smoothly bounded  pseudoconvex domains of finite D'Angelo type whose weak loci realize arbitrary compact subsets of \(\mathbb R^{n-1}\subset \mathbb C^{n-1}\). %Such sets include, for instance, arbitrary Cantor sets in $\mathbb R^{n-1}$. 
Thus, even under a uniform finite-type bound,  weak loci may have  exotic  non-analytic structures. Throughout the paper, we write points in \(\mathbb C^n\) as
\((z,w)\in \mathbb C^{n-1}\times \mathbb C\).

The first   theorem  constructs a smooth convex domain of finite type  in $\mathbb C^2$ whose weakly pseudoconvex locus has the structure of a family  of tree-like sets in the   $\operatorname{Im } w $ (i.e., the Reeb) direction, rooted at the points of a  prescribed compact set $E$ in the  $z$ (complex tangential) direction. 

\begin{theorem}\label{main1}
Let \(E\subset \mathbb R\subset \mathbb C\) be an arbitrary compact set. For every integer \(m\ge 2\), there exist  a smoothly bounded   convex domain \(\Omega\subset \mathbb C^2\) and a neighborhood $\mathcal U$   of  the set $ (E+i0)\times (\mathbb R+i0)
$ in $\mathbb C^2$,   such that the weakly pseudoconvex locus of $\Omega$ is precisely
\[
W=\{(z,w)\in b\Omega:\operatorname{Re} z\in E,\  \operatorname{Im} z=0\}\cap \  \overline{\mathcal U}.
\]
Moreover,  the   D'Angelo type is equal to $2m$ at every point  in $W$.
\end{theorem}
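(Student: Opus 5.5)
Write $z=x+iy$ and $w=u+iv$. The plan is to build a local defining function near the set $(E+i0)\times(\mathbb R+i0)$ of rigid tube type,
\[
\rho(z,w)=u+\phi(z),
\]
with $\phi$ convex and depending only on $z$. I will then globalize by capping off to a bounded convex domain that is strongly convex away from a neighborhood of this piece.

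\textbf{Step 1: choosing $\phi$.} Since $E$ is closed, there is a smooth function $\chi\ge 0$ on $\mathbb R$ with $\chi^{-1}(0)=E$. It can be chosen to vanish to infinite order on $E$ and to be as small as we like in every $C^k$ norm. I would take
\[
\phi(x,y)=x^2+y^{2m}+ y^2\,\chi(x)\;+\;\epsilon\,\chi(x)\,\bigl(\text{something positive}\bigr).
\]
The idea is this: for a function of $z$ alone, the Levi form of $\rho$ is $\Delta\phi/4$. We need $\Delta\phi>0$ exactly off the set $\{x\in E,\ y=0\}$, and we need $\phi$ convex.

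A cleaner choice is
\[
\phi=x^2+y^{2m}+\epsilon\,\chi(x)\,y^2 .
\]
Here $\Delta\phi=2+\dots$, which is always positive; that is bad, because $\Delta x^2=2>0$ and we get no weak points. So the $x$-direction must also degenerate.

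The better choice is $\phi=\psi(x)+y^{2m}$, where $\psi$ is convex with $\psi''\ge 0$ and $\{\psi''=0\}$ is the desired set. Then $\Delta\phi=\psi''(x)+2m(2m-1)y^{2m-2}$. This vanishes exactly where $\psi''(x)=0$ and $y=0$. So I take $\psi''=\chi$ near $E$, with $\chi^{-1}(0)=E$. Convexity of $\phi$ is immediate, because the Hessian is diagonal with entries $\chi(x)\ge 0$ and $2m(2m-1)y^{2m-2}\ge 0$.

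\textbf{Step 2: the D'Angelo type.} At a point $p$ with $x_0\in E$ and $y=0$, the tube structure gives that the holomorphic curve $\zeta\mapsto(x_0+i\zeta,\,\cdot)$, adjusted in the $w$ component, realizes contact of order $2m$ with the boundary. For the upper bound on the type, I use that $\psi\ge0$ and $y^{2m}$ controls contact: for any curve $\gamma=(z(t),w(t))$, $\rho\circ\gamma$ has order at most $2m\,\mathrm{ord}(z)$. The argument goes as follows. Since $\psi$ is convex with $\psi(x)-\psi(x_0)-\psi'(x_0)(x-x_0)\ge0$, subtracting the harmonic (pluriharmonic) linear part leaves a sum of two nonnegative terms. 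The term $y^{2m}$ along $z(t)-x_0=at^k+\dots$ has order exactly $2mk$ unless $a$ is real. When $a$ is real, the analytic real-part cancellation forces the lower order to come from... This is the delicate point. I would handle it as in the standard convex-domain argument: for convex domains, the D'Angelo type equals the line type (McNeal / Boas--Straube). So it suffices to check complex lines through $p$. Along the $z$-line, $\psi(x_0+\mathrm{Re}(\lambda t))+(\mathrm{Im}\lambda t)^{2m}$ vanishes to order exactly $2m$ in $t$. The term $\psi$ is $\ge0$ and cannot cancel the $|t|^{2m}$ growth in the direction where $\mathrm{Im}(\lambda t)\ne 0$. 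Along lines with a $w$ component the order is $1$.

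\textbf{Step 3: globalizing.} Intersect the local model with a large ball, $\{u+\phi<0\}\cap\{|z|^2+|w|^2<R\}$, and smooth the corners with a convex regularized maximum. Alternatively, define $\rho=\max_{\text{reg}}(u+\phi,\ \delta(|z|^2+|w|^2)-C)$, arranged so that the second function dominates outside $\mathcal U$. The weak locus is then contained in $\overline{\mathcal U}$. Inside $\mathcal U$ it is exactly $W$, and points in the smoothing region are strongly convex. I would also need $\psi$ globally convex with $\psi''>0$ outside a neighborhood of $E$; this holds since $\chi>0$ off $E$.

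The main obstacle I expect is this gluing: the transition region has to be strongly convex and meet $\{x\in E, y=0\}$ only within $\overline{\mathcal U}$. The statement's phrasing, with $W$ cut off by $\overline{\mathcal U}$ in the $v$-direction, suggests exactly this truncation.
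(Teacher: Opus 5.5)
Your construction is the paper's (your $\psi,\chi$ are the paper's $\Phi,\psi$). You use the rigid convex model $u+y^{2m}+\psi(x)$ with $\psi''=\chi\ge 0$, $\chi^{-1}(0)=E$ and $\chi$ flat on $E$ (Proposition~\ref{r2}), truncated by $M_\epsilon(\rho_{\mathrm{loc}},|z|^2+|w|^2-K)$ as in Section~\ref{b2}.

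\textbf{The type bound.} You invoke Boas--Straube (line type equals D'Angelo type on convex hypersurfaces). That works, and the paper mentions it as an alternative, but the ``delicate point'' that sent you there does not exist. Normalize by $\tilde z=z-x_p$ and $\tilde w=w-w_p+\psi'(x_p)\tilde z$. The pullback along a curve is then $\operatorname{Re}\tilde w(\zeta)+(\operatorname{Im}\tilde z(\zeta))^{2m}+R(\operatorname{Re}\tilde z(\zeta))$ with $R$ flat. If $\tilde z(\zeta)=a\zeta^k+\cdots$ with $a\neq0$, then $\operatorname{Im}(a\zeta^k)\not\equiv0$ whether or not $a$ is real. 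So $(\operatorname{Im}\tilde z)^{2m}$ has a nonzero, nonnegative, hence non-harmonic leading part of degree exactly $2mk$, which the harmonic $\operatorname{Re}\tilde w$ cannot cancel. This is the paper's direct proof of $\tau\le 2m$.

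In either version you should say explicitly that the vanishing of $\chi$ to infinite order on $E$ is what makes the type exactly $2m$ (order $2m-2$ would already suffice). Also, in your line computation $\psi$ must be replaced by $\psi$ minus its tangent line at $x_0$. Equivalently, the line must be taken in the complex tangent direction $(\lambda,-\psi'(x_0)\lambda)$.

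\textbf{The gluing.} This is what is missing, and the obstacle you name is not the operative one.
\begin{itemize}
\item The band $\{|\rho_{\mathrm{loc}}-\rho_{\mathrm{out}}|<\epsilon\}$ may well meet $\{x\in E,\ y=0\}$. It still contains no weak points, because there $\partial_tM_\epsilon>0$ and $D^2M_\epsilon\ge0$ (Lemma~\ref{mm}), so the Levi form is at least $\partial_tM_\epsilon\cdot L_{\rho_{\mathrm{out}}}>0$.
\item What must be checked is the reverse containment. The fibers $\{(x+i0,-\psi(x)+iv):x\in E,\ |v|<1\}$ must lie in $\{\rho_{\mathrm{loc}}-\rho_{\mathrm{out}}>\epsilon\}$, where $\rho\equiv\rho_{\mathrm{loc}}$, so that both weakness and the type $2m$ carry over. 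On these fibers $\rho_{\mathrm{loc}}-\rho_{\mathrm{out}}=K-x^2-\psi(x)^2-v^2$, which is why the paper takes $\epsilon<1$ and $K>a^2+\sup_E\psi^2+4$ with $E\subset[-a,a]$.
\item You never verify that $b\Omega$ is smooth. The paper does this by a componentwise gradient computation using $x\psi'(x)\ge0$. Convexity gives it for free: $\rho$ is convex on $\mathbb R^4$, so $\nabla\rho(p)=0$ would make $p$ a global minimum, which is impossible at a zero of $\rho$ once $\Omega\neq\emptyset$.
\item Your claim that ``the second function dominates outside $\mathcal U$ and inside $\mathcal U$ the weak locus is exactly $W$'' fails if $\mathcal U$ contains a whole line $\{(x_0+i0,u+i0):u\in\mathbb R\}$ with $x_0\in E$. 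The lower intersection point of that line with $b\Omega$ lies in the ball or band part and is strictly pseudoconvex. The zone analysis actually yields $W=W_{\mathrm{loc}}\cap\mathcal Z_3$. So $\mathcal U$ has to be a neighborhood of the roots $(x+i0,-\psi(x)+i0)$, $x\in E$, for instance $\mathcal U=\{\rho_{\mathrm{loc}}-\rho_{\mathrm{out}}>\epsilon\}$, not of the full real $u$-lines over $E$.
\end{itemize}
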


By adding a controlling term in  the $\operatorname{Im } w $ direction to  the defining equation, we shall   eliminate the fibers in that direction and  obtain a smoothly bounded convex domain whose weakly pseudoconvex locus is precisely the prescribed compact set. 

\begin{theorem}\label{main1'}
Let \(E\subset \mathbb R\subset \mathbb C\) be an arbitrary  compact set. For every integer \(m\ge 2\), there exists a smoothly bounded   convex domain \(\Omega\subset \mathbb C^2\),  such that the weakly pseudoconvex locus of $\Omega$ is precisely
\[
W=\{(z,w)\in b\Omega:\operatorname{Re} z\in E,\  \operatorname{Im} z=0\ \text{and}\  \operatorname{Im} w=0 \} \cong_{\text{diff}} E.
\]
Moreover,  the   D'Angelo type is equal to $2m$ at every point in $W$.
\end{theorem}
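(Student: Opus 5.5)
Since $E$ is an arbitrary closed subset of $\mathbb R$, we begin with a Whitney-type function: a smooth $\phi:\mathbb R\to[0,\infty)$ with $\phi^{-1}(0)=E$, $\phi\equiv$ const outside a large interval, and with $\phi$ and its first two derivatives small enough that convexity survives. The local model near the weak locus will be
\[
\rho(z,w)=\operatorname{Re} w+|z|^{2m}+\phi(\operatorname{Re} z)\,|z|^2+(\operatorname{Im} z)^2+\psi(\operatorname{Im} w),
\]
where $\psi(t)=t^2\,\chi(t)$, or more simply $\psi(t)=t^{2}$. The term $\psi$ is the new ``controlling term'' in the $\operatorname{Im} w$ direction; it cuts the fibres of Theorem \ref{main1} down to $\operatorname{Im} w=0$. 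We then glue this local model into a large strongly convex ball-like domain (for instance by $\max$-smoothing with $|z|^2+|w|^2-R^2$, or a convex cutoff). This makes $\Omega$ bounded, and it is strongly convex (hence strictly pseudoconvex) away from a neighborhood of the origin region.

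The key steps are as follows.

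\textbf{(1) Convexity.} Each summand is convex. For $\phi(x)(x^2+y^2)$ we arrange it by choosing $\phi$ with $|\phi'|,|\phi''|$ controlled relative to $\phi$, in the spirit of the Whitney lemma, and by absorbing errors into $(\operatorname{Im} z)^2$ and $|z|^{2m}$. Alternatively, we replace it with a convex function vanishing to infinite order exactly on $E$. That is the natural route: take $g(x)=\int\!\!\int h$, where $h\ge0$ is smooth with zero set $E$, so that $g$ is convex with $g''=0$ exactly on $E$. We then use $g(\operatorname{Re} z)$, normalized, in place of the $\phi|z|^2$ term.

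\textbf{(2) Levi form.} For $\rho=\operatorname{Re} w+F(z)+\psi(\operatorname{Im} w)$, the Levi determinant is computed with the tangential vector $L=\rho_w\partial_z-\rho_z\partial_w$. It equals, up to the positive factor $|\rho_w|^2$,
\[
F_{z\bar z}+|F_z|^2\,\tfrac14\psi''(\operatorname{Im} w)/|\rho_w|^2 ,
\]
where the Hessian of $\psi$ contributes through $\rho_{w\bar w}=\psi''/4$. Hence the Levi form vanishes iff $F_{z\bar z}=0$ and ($F_z=0$ or $\psi''=0$). With $F=|z|^{2m}+y^2+g(x)$, we have $F_{z\bar z}=0$ iff $z=0$ when $E\ni0$. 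This is too small, so the design must be changed so that $F_{z\bar z}$ vanishes exactly on $E\times\{0\}$. Concretely, we take $F(z)=y^2+G(x)$ with $G''(x)\ge0$ vanishing exactly on $E$ to order $2m-2$. The order is to be controlled via $G''(x)\approx \mathrm{dist}(x,E)^{2m-2}$ flattened appropriately; D'Angelo type forces finite-order vanishing only in the normal direction, which is fine. With this $F$, $F_z=(G'(x)-2iy)/2$. To kill the locus off $\operatorname{Im} w=0$, we choose $\psi$ with $\psi''>0$ except at $0$, and arrange $G'\neq0$ on $E$ by adding a linear term in $x$ and translating. Then weakness requires $\psi''(\operatorname{Im} w)=0$, i.e.\ $\operatorname{Im} w=0$, giving exactly $W\cong E$ via the graph over $\operatorname{Re} z\in E$.

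\textbf{(3) D'Angelo type $2m$ at points of $W$.} At $p\in W$, the complex line in the $z$-direction tangent to $b\Omega$ has contact given by $y^2+G(x)+\psi$ restricted to it. We use $G''$ flat along $E$ but $|z|^{2m}$-type growth supplied by the extra term $|z-x_0|^{2m}$. This is the main obstacle: a single smooth function must contribute exactly order $2m$ at every $x_0\in E$ simultaneously, uniformly, while staying Levi-positive off $E$. We would try $F(z)=\int_E|z-t|^{2m}\,d\mu(t)$ plus flat corrections, or, more robustly, work locally near each $x_0$ and bound the type above by $2m$ via the regular type $2m$ (convex domains, McNeal: D'Angelo type equals line type). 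Line type is then computed directly: the restriction of $\rho$ to lines through $p$ vanishes to order $\le 2m$ in the $z$-direction, and to order $2$ in the others because $\psi''(0)$ and the $y^2$ term are positive. Gluing to the global convex domain is routine by comparison.
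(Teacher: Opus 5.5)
\textbf{There is a genuine gap: the construction you actually specify has no weakly pseudoconvex points at all.}

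With $F(z)=y^2+G(x)$ you get $F_{z\bar z}=\tfrac14\bigl(G''(x)+2\bigr)\ge \tfrac12$. By your own formula in step (2), the Levi form is therefore positive at every boundary point. Step (3), which assumes $\psi''(0)>0$ (as does your opening choice $\psi(t)=t^2$), destroys the weak locus a second time: weakness would then also require $F_z=0$, which you excluded on $E$ by arranging $G'\neq 0$.

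Geometrically, at $p$ over $x_p\in E$ the unique complex tangent line is not the $z$-axis. It is the tilted line $\zeta\mapsto (x_p+\zeta,\,w_p-G'(x_p)\zeta)$, along which $\operatorname{Im} w=-G'(x_p)\operatorname{Im}\zeta$. So both $y^2$ and a $\psi$ with $\psi''(0)>0$ give contact of order $2$ in the only complex tangent direction. There are no ``other'' directions of order $2$; transverse lines have order $1$.

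What is needed is an $\operatorname{Im} z$-term and an $\operatorname{Im} w$-term with vanishing second derivative on $\{y=0\}$ and $\{v=0\}$. The $\operatorname{Im} w$-exponent must also be at least $2m$, or it lowers the type along the tilted line to that exponent. This is the paper's choice $\rho_{\mathrm{loc}}=u+v^{2N}+y^{2m}+\Phi(x)$ with $N\ge m\ge 2$, $\Phi''=\psi\ge 0$, $\psi^{-1}(0)=E$, and $0\notin E$ so that $\Phi'\neq 0$ on $E$. With that choice your weak-locus logic in (2) is exactly right: weakness forces $F_{z\bar z}=0$, and then $F_z\neq 0$ forces $\psi''(v)=0$, i.e.\ $v=0$.

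\textbf{Your mechanism for type $2m$ also cannot work for general $E$.} A smooth $G''$ vanishing on $E$ is automatically flat at every accumulation point of $E$. If $G''(x_k)=0$ with $x_k\to x_0$, repeated use of Rolle's theorem gives $G^{(j)}(x_0)=0$ for all $j\ge 2$. So for a Cantor set, $G''$ cannot vanish to order $2m-2$ anywhere on $E$. Likewise $\int_E|z-t|^{2m}\,d\mu(t)$ has $\partial_z\partial_{\bar z}$ equal to $m^2\int_E|z-t|^{2m-2}\,d\mu(t)$, which is positive off at most one point.

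The ``main obstacle'' you flag disappears once the finite-order contact comes from a term independent of $x$. In the paper $\Phi''$ is flat on $E$. After $\tilde z=z-x_p$, $\tilde w=w-w_p+\Phi'(x_p)(z-x_p)$, the defining function becomes $\operatorname{Re}\tilde w+(\operatorname{Im}\tilde w-\Phi'(x_p)\operatorname{Im}\tilde z)^{2N}+(\operatorname{Im}\tilde z)^{2m}+R(\operatorname{Re}\tilde z)$ with $R$ flat. Take a curve with $\nu(\tilde z)<\nu(\tilde w)$. The lowest homogeneous term of the nonnegative sum of the two even powers has degree at most $2m\nu(\tilde z)$, is nonnegative, and vanishes at $0$. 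Hence it is not harmonic and cannot be cancelled by $\operatorname{Re}\tilde w$. This gives $\tau(p)\le 2m$, with equality along $\zeta\mapsto(\zeta,0)$.

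The gluing step is routine but still has to be checked, as the paper does with the regularized maximum. One must show there are no critical points of $\rho$ in the transition band, and that $W$ lies in the zone where $\rho=\rho_{\mathrm{loc}}$.
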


A surprising special case occurs when $E$ is a Cantor set, which is totally disconnected but may have either zero or  positive Lebesgue measure. More generally, depending on the construction, a Cantor set in $\mathbb R$ may have any prescribed Hausdorff dimension between $0$ and $1$. In this setting,  the construction in Theorem \ref{main1'} can realize any such Cantor set as the weakly pseudoconvex locus, while       Theorem \ref{main1} produces what may be regarded intuitively as   a ``Cantor forest'' as the weakly pseudoconvex locus; see Figure \ref{fig:cantor-forest}.

In higher dimensions, we have constructed a similar result as in Theorem \ref{main1}, although in this setting the construction produces pseudoconvex domains.

\begin{theorem}\label{main2}
Let $n\ge 3$ and $E\subset \mathbb R^{n-1}\subset \mathbb C^{n-1}$ be an arbitrary closed set. For every integer \(m\ge 2\) and every point $x_0\in E$, there exist    a smoothly  bounded  pseudoconvex domain \(\Omega\subset \mathbb C^n\) and a neighborhood $\mathcal U$ of the real line $\{  x_0+i0 \}\times (\mathbb R+i0) $ %$\{(x_0+i0, u)\in \mathbb C^n: u\in \mathbb R\} $ 
in $\mathbb C^{n}$,    such that the weakly pseudoconvex locus of $\Omega$ is precisely
\[
W=\{(z,w)\in b\Omega: \operatorname{Re} z\in E,\  \operatorname{Im} z=0 \}\cap\  \overline{ \mathcal U}.
\]
Moreover, the D'Angelo type  is less than or equal to $2m+2 $ at every point in $W$. %If, in addition, \(E\) is a smooth submanifold, then the D'Angelo type  is   equal to $2m $ at every point $p\in W$. ???
\end{theorem}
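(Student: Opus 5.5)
Our plan is to build the domain near the line $\{x_0+i0\}\times(\mathbb R+i0)$ as a rigid hypersurface $\operatorname{Re} w + \rho(z)<0$, with $\rho$ plurisubharmonic and the Levi form degenerate exactly on $E+i0$. Since the Levi form of such a hypersurface is $i\partial\bar\partial\rho(z)$ restricted to $T^{1,0}$, which is independent of $\operatorname{Im} w$, the weak locus of the local piece is automatically $\{\operatorname{Re} z\in E,\operatorname{Im} z=0\}$, cut off by whatever region $\overline{\mathcal U}$ the local model is kept in. After that we glue in a strictly pseudoconvex cap.

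\textbf{Step 1 (flat function).} By Whitney's theorem there is a smooth $\phi\ge 0$ on $\mathbb R^{n-1}$ with $\phi^{-1}(0)=E$. We pass to a function vanishing to infinite order exactly on $E$, for instance $\phi\mapsto e^{-1/\phi}$, and normalize so that $\phi$ and its derivatives up to order $2$ are as small as needed on a fixed ball around $x_0$.

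\textbf{Step 2 (plurisubharmonic model).} Write $z=x+iy$ and set
\[
\rho(z)=\sum_{j=1}^{n-1} y_j^{2m}+\phi(x)\,|z|^2+\epsilon|z-x_0|^2\phi(x)\quad(\text{or a variant}),
\]
more simply $\rho=\sum_j y_j^{2m}+\phi(x)|y|^2+\psi(x)$ with $\psi$ chosen below. The term $\sum_j y_j^{2m}$ has Levi form $\tfrac{2m(2m-1)}4\,\mathrm{diag}(y_j^{2m-2})$. This is positive semidefinite and degenerates on the real slice $y=0$. We then need a term that is strictly plurisubharmonic wherever $x\notin E$ and contributes nothing on $E$; the natural candidate is $\phi(x)\|z-x_0\|^2$. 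Its Levi form is $\phi\,\mathrm{Id}$ plus error terms of the form $(\partial\phi)\otimes(\bar z-\bar x_0)$ and $|z-x_0|^2\partial\bar\partial\phi$. The main difficulty lies here, because off $E$ these errors must be dominated both by $\phi$ and by the $y^{2m-2}$ terms. We therefore choose $\phi$ flat enough that $|\nabla\phi|\le C\phi^{1/2}$ and $|D^2\phi|\le C\phi^{1/2}$. This is possible by composing with a sufficiently flat function, using a Glaeser-type inequality for nonnegative functions. We then restrict to a small neighborhood $|z-x_0|<\delta$, which makes the errors smaller than $\tfrac12\phi$. The resulting Levi form is positive definite off $E$. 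On $E\times\{y\ne0\}$ the $y^{2m-2}$ terms give positivity only in directions where $y_j\neq 0$; to fix this we replace $\sum y_j^{2m}$ by $|y|^{2m}$, which is strictly plurisubharmonic for $y\ne0$. At points of $E+i0$ the Levi form vanishes, so these points are weak. The type there is bounded by $2m+2$, obtained by testing contact with holomorphic curves against the lower bound $\rho\gtrsim|y|^{2m}+\phi|z-x_0|^2$, and using the regular-type/D'Angelo-type comparison in the rigid setting. The extra $2$ comes from the quadratic factor in directions where $E$ may be thick.

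\textbf{Step 3 (globalization).} We take $\Omega=\{\operatorname{Re}w+\rho(z)+\chi(z,w)<0\}$, where $\chi$ vanishes on a neighborhood $\mathcal U$ of the line (within $|z-x_0|<\delta$, $|\operatorname{Im}w|<R$, $|\operatorname{Re} w|$ small), and outside $\mathcal U$ we add a large strictly plurisubharmonic convex term such as $A(|z-x_0|^2+|w|^2)^N\cdot$(cutoff). The aim is a bounded domain that is strictly pseudoconvex outside $\overline{\mathcal U}$, by the standard patching of strictly plurisubharmonic functions with large multipliers. We must check that in the transition region the Levi form stays positive definite. This holds where $x\notin E$ or $y\ne0$, since $\rho$ is already strictly plurisubharmonic there. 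On the weak set the added term must itself be strictly plurisubharmonic, which is why $W$ is intersected with $\overline{\mathcal U}$, as in the statement.
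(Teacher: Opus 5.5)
There is a genuine gap in Step 2: as you define it, $\rho=|y|^{2m}+\phi(x)|z-x_0|^2$ is not plurisubharmonic for general closed $E$, and your error estimate goes the wrong way. With $|\nabla\phi|\le C\phi^{1/2}$, the cross term $(\partial\phi)\otimes(\bar z-\bar x_0)$ has size about $C\phi^{1/2}\delta$. That is \emph{larger} than $\tfrac12\phi$ wherever $\phi<4C^2\delta^2$, i.e.\ exactly near $E$. Moreover, the part of this term coming from $x-x_0$ gets no help from $|y|^{2m-2}$, which vanishes at $y=0$.

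No choice of $\phi$ or $\delta$ fixes this. At $y=0$ one computes $4\langle H_\rho e,e\rangle=\partial_e^2\Phi+2\phi$ for real unit $e$, where $\Phi=\phi\,|x-x_0|^2$. Suppose $E$ contains $x_{1,2}=x_0+re_1\mp\tfrac r2e_2$ and the open segment between them misses $E$. Then $F(s)=\Phi(x_1+se_2)$, $0\le s\le r$, vanishes at both ends and is positive inside. Since $|x-x_0|\ge r$ on the segment, plurisubharmonicity would force $F''+\tfrac2{r^2}F\ge0$. Take $u(s)=\sin\bigl(\sqrt2(s+\tau)/r\bigr)$, which is positive on $[0,r]$ for small $\tau>0$ because $\sqrt2<\pi$. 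Integrate $(u'F-uF')'=-u\bigl(F''+\tfrac2{r^2}F\bigr)$ over $[0,r]$ and use $F'(0)\ge0\ge F'(r)$: this forces $F\equiv0$, a contradiction. Placing such pairs at scales $r=2^{-k}$ gives a compact $E$ for which plurisubharmonicity fails in every neighbourhood of $x_0$. A psh potential whose Levi determinant vanishes exactly on $E+i0$ is the heart of the theorem. The paper gets it from Theorem~\ref{Ka} by solving $\det H_f=|y|^{2m}+\psi(x)$, and then bounds the type by $2m+2$ via Hadamard's inequality.

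Your own ``more simply'' variant, taken with $\psi=0$ and $|y|^{2m}$, does avoid this obstruction, because it drops the factor $|x-x_0|^2$. For $f=|y|^{2m}+\phi(x)|y|^2$,
\[
4H_f=D_y^2|y|^{2m}+2\phi I+|y|^2D^2\phi+2i\bigl(\nabla\phi\,y^T-y\,\nabla\phi^T\bigr),
\]
so every error term carries a factor of $|y|$. Take $\phi=(\lambda g)^N$ with $g\ge0$ smooth, $g^{-1}(0)=E$, $N\ge2m-2$ and $\lambda$ small. Young's inequality then gives $H_f\ge\tfrac14\bigl(m|y|^{2m-2}+\phi\bigr)I$ for $x$ near $x_0$, while $H_f=0$ on $E+i0$. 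Since $f\ge|y|^{2m}$ and $f$ vanishes to second order on $E+i0$, the argument of Proposition~\ref{lc} gives type $\le2m$. That would be an elementary substitute for the Monge--Amp\`ere step, but it is not what you wrote.

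Step 3 is also unjustified as stated. A cutoff times $A(|z-x_0|^2+|w|^2)^N$ adds indefinite terms to the complex Hessian on the transition region. The weak fibres over $E+i0$ are unbounded in $\operatorname{Im}w$, so that region meets points where $H_\rho$ has a kernel, and nothing there absorbs those terms. Large-multiplier patching only works where $\rho$ is already strictly psh. You need a gluing that preserves plurisubharmonicity without any strictness. The paper uses the convex, coordinatewise nondecreasing regularized maximum $M_\epsilon(\rho_{\text{loc}},\rho_{\text{out}})$ with $\rho_{\text{out}}=A|z-x_0|^2+|w|^2-K$; on its transition band, strict pseudoconvexity comes from the term $\beta L_{\rho_{\text{out}}}$. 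The paper then chooses $A$, $K$, $\epsilon$ so that three things hold: $\overline\Omega$ stays inside the chart where $f$ is defined, $\nabla\rho\ne0$ on $b\Omega$, and a neighbourhood of the base point of the weak locus lies where $\rho=\rho_{\text{loc}}$.
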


If   the prescribed closed set $E$ is of real dimension $n-2$,  then the construction can be altered to produce convex domains, as given below.

\begin{theorem}\label{main3}
Let $n\ge 3$ and $E\subset \mathbb R^{n-2}\subset \mathbb C^{n-2}$ be an arbitrary closed set. For every integer \(m\ge 2\) and every point $\hat x_0\in E$, there exist  a smoothly  bounded   convex domain \(\Omega\subset \mathbb C^n\)  and a neighborhood $\mathcal U$ of the real line $\{  (\hat x_0, 0)+i0 \}\times (\mathbb R+i0) $ % $\{((\hat x_0, 0)+i0, u)\in \mathbb C^n: u\in \mathbb R\} $ 
in $\mathbb C^{n}$,    such that the weakly pseudoconvex locus of $\Omega$ is precisely
\[
W=\{(z,w)\in b\Omega: \operatorname{Re} z\in  E\times \{0\},\  \operatorname{Im} z=0\} \cap\  \overline{ \mathcal U}.
\]
Moreover, the D'Angelo type  is less than or equal to $2m $ at every point in $W$.
\end{theorem}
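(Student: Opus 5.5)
We build the domain near the line $\{(\hat x_0,0)+i0\}\times(\mathbb R+i0)$ as a local model $\operatorname{Re} w > \psi(z)$ with a convex, rigid $\psi$, that is, one independent of $\operatorname{Im} w$. We then cap it off globally by a strictly convex piece. Write $z=(\hat z, z_{n-1})$ with $\hat z\in\mathbb C^{n-2}$ and $z_j=x_j+iy_j$. Since $E$ is closed, Whitney's theorem gives a smooth function $f\ge 0$ on $\mathbb R^{n-2}$ with $f^{-1}(0)=E$. We take
\[
\psi(z)=\sum_{j=1}^{n-2} y_j^{2m}+ |z_{n-1}|^{2m} + \phi(\hat x)\Big(|\hat z|^2+|z_{n-1}|^2\Big) + \chi(\hat x) ,
\]
or a simpler variant of it. Here $\phi=\varepsilon f^{k}$ (possibly composed with a flat function) is nonnegative, vanishes exactly on $E$, and has derivatives small enough that convexity is preserved. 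The term $\chi$ is a convex function of $\hat x$ vanishing to infinite order on $E$; if it causes trouble we drop it. A cleaner alternative is a quadratic strictly convex part multiplied by a flat weight: $\psi=\sum_j y_j^{2m}+|z_{n-1}|^{2m}+\lambda(\hat x)\,|z|^2$, with $\lambda\ge0$ small in $C^2$ and $\lambda^{-1}(0)=E$. The $\mathbb R^{n-2}$ real directions $\hat x$ are the ones needing care, since $\psi$ must be convex in them while remaining strictly plurisubharmonic off $E\times\{0\}$.

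The key steps are as follows. First, compute the complex Hessian of $\psi$. Since $\psi$ is rigid, the Levi form of $\rho=\psi-\operatorname{Re} w$ restricted to the complex tangent space is, up to a harmless projection, $\partial\bar\partial\psi$. The terms $y_j^{2m}$ and $|z_{n-1}|^{2m}$ contribute $\operatorname{diag}(c\,y_j^{2m-2})$ and $c|z_{n-1}|^{2m-2}$. The weight term contributes roughly $\lambda I$ plus error terms involving $\nabla\lambda\cdot |z|$ and $\nabla^2\lambda\,|z|^2$. Second, show the following. (a) Off $\hat x\in E$, the Hessian is positive definite near the line, because $\lambda I$ dominates the errors once $\lambda$ is chosen with $|\nabla\lambda|\le C\lambda^{1/2}\cdot$small and $|\nabla^2\lambda|\le$small, which is possible by taking $\lambda=\eta\,f^2$ with $f$ flat, where $|\nabla f^2|\lesssim f$. (b) On $\{\hat x\in E\}$, the Hessian is degenerate exactly when $y=0$ and $z_{n-1}=0$, and it is positive definite otherwise. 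This gives precisely $W$ inside $\overline{\mathcal U}$, where $\mathcal U$ is a small tube $|z|<\delta$ around the line. Third, verify real convexity of $\psi$: the real Hessian of the polynomial terms is positive semidefinite. The term $\lambda|z|^2$ has real Hessian $2\lambda I + 2\nabla\lambda\otimes z + z\otimes\nabla\lambda+ \nabla^2\lambda|z|^2$. This is nonnegative by the same estimates, except possibly in the $\hat x$-directions at points of $E$, where everything vanishes anyway. Fourth, globalize: intersect or glue with a large strictly convex ball in $\operatorname{Re} w$ and $|w|$, using a smooth convex maximum, so that the final $\rho$ is convex, bounded, strictly convex outside $\mathcal U$, and unchanged inside $\overline{\mathcal U}$. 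Fifth, bound the type at points of $W$. Restricting to complex lines in the $z$-variables, the function $\psi$ has the homogeneous part $\sum_j y_j^{2m}+|z_{n-1}|^{2m}$, and its restriction to any nonconstant holomorphic curve $\gamma$ through the point is nonvanishing to order at most $2m$. This works because $\sum y_j^{2m}+|z_{n-1}|^{2m}$ vanishes only on the totally real $\mathbb R^{n-2}\times\{0\}$, which contains no holomorphic curve. The $\operatorname{Re} w$ component handles curves with $w$ nonconstant, giving order $\le 2m$.

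The main obstacle is the $\hat x$-direction behaviour at boundary points of $E$ and points with $\hat x\notin E$ close to $E$. There, convexity and strict pseudoconvexity must both come from the weight $\lambda$, while its derivatives must not spoil them. I expect to resolve this with the standard trick of choosing $\lambda=\eta\,g^2$, where $g$ is a Whitney-type function that is flat on $E$. This choice gives $|\nabla\lambda|^2\le C\eta\lambda$, and restricting to $|z|<\delta$ with $\delta$ small makes the cross terms absorbable by $\lambda I$ via Cauchy--Schwarz, possibly after adding the convex term $\eta' g^2(\hat x)$ to supply the missing $\hat x\hat x$-positivity.
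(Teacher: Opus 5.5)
\textbf{There is a genuine gap in your choice of potential.} For a general closed $E$, your $\psi$ is neither convex nor even plurisubharmonic. Moreover, over $E$ its Levi determinant degenerates on a set strictly larger than $W$.

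\emph{Convexity.} Translate so that $\hat x_0=0$ and restrict $\psi=\sum_j y_j^{2m}+|z_{n-1}|^{2m}+\lambda(\hat x)|z|^2$ to the real slice $\{\operatorname{Im}\hat z=0,\ z_{n-1}=0\}$. There $\psi=\lambda(\hat x)|\hat x|^2\ge 0$, and it vanishes exactly on $E$. A nonnegative convex function has a convex zero set, so convexity of $\psi$ on a ball around $\hat x_0$ forces $E$ to be convex there. This already excludes Cantor sets. The variant with $\chi$ fails the same way: a convex $\chi$ that is flat on $E$ is $\ge 0$ and vanishes on the convex hull of $E$, so the zero set on the slice is still $E$. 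Your proposed fix fails too, since $\eta' g^2$ is not convex unless $E$ is.

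\emph{Where your estimates go wrong.} Absolute smallness of $\nabla^2\lambda$ does not let $\lambda I$ absorb $|z|^2\nabla^2\lambda$. What you would need is the relative bound $(\nabla^2\lambda)_-\lesssim\lambda/|z|^2$, and that fails near $E$. Concretely, at $\operatorname{Im}\hat z=0$ the $z_j\bar z_j$ entry of $H_\psi$ ($j\le n-2$) is fed only by the weight, and equals $\frac14\big(|z|^2\lambda_{x_jx_j}+4x_j\lambda_{x_j}+4\lambda\big)$. For fixed $z_{n-1}\neq 0$, this is a uniformly elliptic operator in $x_j$ with positive zeroth-order term. By the maximum principle it cannot be nonnegative on a sufficiently short segment on which $\lambda>0$ and at whose endpoints $\lambda=0$. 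So pseudoconvexity is lost in the short gaps of $E$.

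\emph{Claim (b).} Independently, claim (b) is false. At $\hat x\in E$ the flat weight contributes nothing, so
$H_\psi=\operatorname{diag}\big(c\,y_1^{2m-2},\dots,c\,y_{n-2}^{2m-2},\,m^2|z_{n-1}|^{2m-2}\big)$ with $c=\frac{m(2m-1)}{2}$. Its determinant vanishes as soon as a single $y_j$, or $z_{n-1}$, vanishes. For instance, every boundary point with $\operatorname{Re}\hat z\in E$ and $z_{n-1}=0$ is weak, including those with $\operatorname{Im}\hat z\neq 0$.

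\textbf{The missing idea.} You need to encode $E$ in the degeneracy of the real Hessian of a convex function of $x=\operatorname{Re}z$, not in the zero set of a weight. For a convex $\Phi$, the zero set must be convex, but the set where $\det D^2\Phi$ vanishes can be an arbitrary closed set. In one variable this is just $\Phi''=\psi$, which is the paper's $n=2$ construction.

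\emph{The paper's route.} The paper obtains such a $\Phi$ near $(\hat x_0,0)$ from Hong--Zuily's local solvability of $\det D^2\Phi=|x_{n-1}|^{2m}+\psi(\hat x)$. It then takes $f=|y|^{2m}+\Phi(x)$ with the Euclidean norm $|y|$, so that $H_f=\frac14\big(D_x^2\Phi+D_y^2|y|^{2m}\big)$. Since $D_y^2|y|^{2m}$ is positive definite for every $y\neq 0$ (your coordinatewise $\sum_j y_j^{2m}$ is not), $H_f$ is singular exactly when $y=0$, $x_{n-1}=0$ and $\psi(\hat x)=0$. Finally it glues with the anisotropic barrier $A|z-(\hat x_0,0)|^2+|w|^2-K$, taking $A$ large so that the glued domain stays inside the chart where the local potential is defined.

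\emph{Salvaging your weight idea.} It works if the weight multiplies a transverse square and you keep a strictly convex term in $\hat x$. Take $\Phi=\frac12|\hat x|^2+x_{n-1}^{2m}+\frac12 g(\hat x)^2x_{n-1}^2$. For $|x_{n-1}|$ small, the $\hat x\hat x$ block of $D^2\Phi$ is $\ge\frac12 I$. Using $|\nabla g^2|^2=4g^2|\nabla g|^2$, the Schur complement is $\ge 2m(2m-1)x_{n-1}^{2m-2}+\frac12 g^2$. Hence $\Phi$ is convex with $\det D^2\Phi=0$ exactly on $E\times\{0\}$, and it can replace the Hong--Zuily solution.

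Your D'Angelo type argument is essentially the paper's: the leading term is nonnegative and nonharmonic, so $\operatorname{Re}w$ cannot cancel it. It goes through once the potential is corrected.
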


%We provide an algorithm to construct bounded smooth pseudoconvex domains with bizarre weak loci. 
Our construction is based on three main ingredients.  First, 
 we make use of the Whitney extension theorem to construct hypersurface models whose weak loci agree  with the prescribed closed sets.  When \(n=2\), the construction can be carried out using elementary subharmonic
potentials, since the weak pseudoconvexity condition corresponds to  a linear Poisson equation in this
dimension. The    hypersurfaces   obtained by this construction are, however, initially
unbounded.  When $n\ge 3$, the weak pseudoconvexity condition is reduced to a fully nonlinear  partial differential equation. We instead use local solvability results for  the degenerate elliptic Monge--Amp\`ere equation,  due to Hong and Zuily \cite{HZ87} and Kallel-Jallouli \cite{Ka03}, to obtain  smooth convex/plurisubharmonic potentials and hence the  desired local hypersurface models. Secondly, in order to pass from   unbounded or local hypersurfaces  to  bounded global hypersurfaces  without changing the prescribed weak loci, we use a symmetric smooth regularized maximum function. This function is obtained by mollifying the absolute value function and is designed to preserve the monotonicity and positive semi-definiteness properties. Finally,  we introduce  an anisotropic ellipsoidal global barrier, which, together with the  regularized maximum function, allows us to perform the patching construction and yield smoothly bounded global domains,  while preserving the weakly pseudoconvex loci and keeping  the D'Angelo type under the  prescribed upper bound.

%\begin{remark}
%???The Euclidean subspace $\mathbb{R}^{n-1}$ can be generalized to any totally real $C^\omega$ submanifold $M \subset \mathbb{C}^{n-1}$ of real dimension $d \le n-1$, with $E \subset M$ serving as an arbitrary compact subset.
%\end{remark}

All   examples constructed in this paper  are of finite D'Angelo type.   Moreover, in each of these examples, the prescribed weak locus has zero surface measure on the boundary, and the corresponding Levi determinant vanishes to finite order at every point of this locus. These observations would   suggest the following natural questions for smooth pseudoconvex domain of finite   type. 
\medskip

 \noindent\textbf{Conjecture 1:}
   Let $\Omega$ be  a smoothly bounded pseudoconvex domain of finite D'Angelo type  in $\mathbb C^n, n\ge 3$. Then the collection of all weakly pseudoconvex points in $b\Omega$  has zero surface measure on \(b\Omega\).
\medskip

   \noindent\textbf{Conjecture 2:}
 Let $\Omega$ be  a smoothly bounded pseudoconvex domain of finite D'Angelo type  in $\mathbb C^n, n\ge 3$. Then the Levi determinant, regarded as a smooth function on
$b\Omega$, is not flat at any point of \(b\Omega\). Equivalently, the Levi determinant  does not vanish to infinite order along the  tangential directions of $b\Omega$ at any weakly pseudoconvex boundary point. %has finite order vanishing  along tangential directions    at every point of \(b\Omega\).
 \medskip

As shown in Proposition \ref{2to1},    Conjecture  2 implies Conjecture 1.  In complex dimension two, the conclusions of both conjectures hold, as a direct consequence of a result of Kohn \cite{Ko72}, together with the equivalence between finite D'Angelo type and finite commutator type; see also \cite{DA93} or Proposition \ref{cn=2}. In higher dimensions, to the best of our knowledge, both questions remain open.

The rest of the  paper is organized as follows. In Section \ref{pre}, we review the definitions of weakly pseudoconvex  locus and D'Angelo type, together  with several  examples. In  Section \ref{wh}, we  prove  the   Whitney extension theorem needed for the construction. Local models with the prescribed weakly pseudoconvex  loci are constructed in Section \ref{2t} in dimension two, and in Sections \ref{np} and \ref{nc} in higher dimensions. To patch the  local models with exterior barrier functions, we discuss a regularized maximum function in Section \ref{rm}. Finally, in Sections \ref{b2}--\ref{bn}, we construct global smoothly bounded domains and verify  the desired properties, thereby completing the proofs of  our main theorems.

\section{Preliminaries}\label{pre}

Let $\Omega$ be a smoothly  bounded pseudoconvex domain in $\mathbb C^n$ with a smooth   defining function $\rho$, $n\ge 2$. Namely, $\rho$ is a smooth real function near a neighborhood $U$ of the boundary  $b\Omega$ with $\Omega \cap U =\{ p\in U: \rho(p)<0\}$, $ b\Omega \cap U=\{ p\in U: \rho(p)=0\}$ and   $\nabla \rho\ne 0$ on $b\Omega$. 

\subsection{Levi determinant and weakly pseudoconvex locus}
\begin{definition} At a boundary point $p \in b\Omega$, the complex normal gradient in terms of coordinates $(z_1, \ldots, z_n)$ is given by the   vector 
\[
 {\partial}\rho(p) = \left.\left( \frac{\partial \rho}{\partial  {z}_1}, \frac{\partial \rho}{\partial {z}_2}, \dots, \frac{\partial \rho}{\partial  {z}_n} \right)\right|_p.
\]
The \textbf{complex tangent space} $T_p^{1,0}(b\Omega)$
is defined by
\begin{equation*}
T_p^{1,0}(b\Omega)
=
\left\{
\xi=\sum_{j=1}^n \xi_j \frac{\partial}{\partial z_j}
:
  \partial\rho(p)(\xi)=0
\right\} = \left\{
\xi=\sum_{j=1}^n \xi_j \frac{\partial}{\partial z_j}
:
\sum_{j=1}^n
\frac{\partial \rho}{\partial z_j}(p)\xi_j
=
0
\right\}.
\end{equation*}
\end{definition}

After identifying
$\xi=\sum_{j=1}^n \xi_j \frac{\partial}{\partial z_j}$
with its coefficient vector $(\xi_1,\ldots,\xi_n)\in\mathbb C^n$,
the condition for the complex tangent space $T_p^{1,0}(b\Omega) $  can also be written as
\begin{equation*}
\langle \bar\partial\rho(p),\xi\rangle_{\mathbb C^n}=0,
\end{equation*}
where $\langle\cdot,\cdot\rangle_{\mathbb C^n}$ denotes the
standard Hermitian inner product, taken to be linear in the second
variable.  

The intrinsic geometric behavior of the boundary is captured by the Levi form defined on the complex tangent space as  below. 
\begin{definition} 
Let $H_\rho = \left( \frac{\partial^2 \rho}{\partial z_j \partial \bar{z}_k} \right)$ be the full $n \times n$ complex Hessian matrix of $\rho$. The \textbf{Levi form} at a point $p\in b\Omega$ evaluated on a vector ${\xi} \in T_p^{1,0}(b\Omega)$ is the quadratic form
\begin{equation*}
L_\rho(p; {\xi}, \bar{{\xi}}) = {\xi}^* H_\rho(p) {\xi}
\end{equation*}
restricted to the $(n-1)$-dimensional subspace $T_p^{1,0}(b\Omega)$.
\end{definition}

Since the Levi form is intrinsically defined only on the complex tangent space, we shall use the complex Hessian restricted to this subspace. For \(p\in b\Omega\), choose an \(n\times (n-1)\) matrix \(E_p\) whose columns form an orthonormal basis of the complex tangent space \(T_p^{1,0}(b\Omega)\) with respect to the standard Hermitian metric on \(\mathbb C^n\). We define  the \textbf{restricted complex Hessian} at $p$ by 
\begin{equation*}
\tilde{H}_\rho(p) = E_p^* H_\rho(p) E_p.
\end{equation*}
This  is an $(n-1) \times (n-1)$ Hermitian matrix representing the Levi form of  \(b\Omega\) at \(p\) with respect to the chosen orthonormal basis of \(T_p^{1,0}(b\Omega)\). 

Although the matrix \(\widetilde H_\rho(p)\) depends on the choice of orthonormal basis, a different choice changes it only by unitary conjugation. Thus its eigenvalues are independent of this choice. We denote these restricted eigenvalues by \begin{equation*} \lambda_1(p)\le \lambda_2(p)\le \cdots \le \lambda_{n-1}(p). \end{equation*} The product \begin{equation*} \prod_{j=1}^{n-1}\lambda_j(p) = \det \widetilde H_\rho(p) \end{equation*} will be called the \textbf{Levi determinant} of \(b\Omega\) at \(p\), with respect to the defining function \(\rho\).

 \medskip

%The ordered eigenvalues of $\tilde{H}_\rho(z)$, denoted by $\lambda_1(z) \le \lambda_2(z) \le \dots \le \lambda_{n-1}(z)$, are precisely the restricted eigenvalues of $H_\rho(z)$. In particular, the product $\prod_{j=1}^{n-1} \lambda_j(z) $ is called the Levi determinant of $b\Omega$ at $z$.
The following simple example illustrates how to compute the restricted complex Hessian and its eigenvalue(s).  

\begin{example}
Consider the  pseudoconvex domain $\Omega = \{z \in \mathbb{C}^2 : |z_1|^2 + |z_2|^4 - 1 < 0\}$ with defining function $\rho(z) = z_1\bar{z}_1 + (z_2\bar{z}_2)^2 - 1$. We compute the restricted complex Hessian at the boundary point $p=(1,0)$.
The complex gradient is $\partial\rho = (\bar{z}_1, 2\bar z_2|z_2|^2)$, which at $p$ becomes $(1, 0)$. 
It follows that the complex tangent space $T_p^{1,0}(b\Omega)$ is spanned by $E_p = \begin{pmatrix} 0 \\ 1 \end{pmatrix}$. The full complex Hessian at $p$ is 
\[
H_\rho(p) =\left. \begin{pmatrix} 1 & 0 \\ 0 & 4|z_2|^2 \end{pmatrix} \right|_p   = \begin{pmatrix} 1 & 0 \\ 0 & 0 \end{pmatrix}.
\]
Therefore, the restricted Hessian matrix at $p$ is 
\[
\tilde{H}_\rho(p) = E_p^* H_\rho(p) E_p = \begin{pmatrix} 0 & 1 \end{pmatrix} \begin{pmatrix} 1 & 0 \\ 0 & 0 \end{pmatrix} \begin{pmatrix} 0 \\ 1 \end{pmatrix} = 0.
\]
Hence the unique restricted eigenvalue is $\lambda_1 = 0$,   identifying $p$ as a point where the Levi determinant vanishes.  
\end{example}

While the individual restricted eigenvalues $\lambda_j(p)$ are extrinsic and depend heavily on  the scaling of the defining function, Sylvester's law of inertia guarantees that the signature of the Levi form is a biholomorphic invariant. In particular, the weak locus, the degeneracy of the Levi form as defined below, is independent of the choice of basis.   

\begin{definition} 
The \textbf{weakly pseudoconvex locus} (or simply, the \textbf{weak locus}) of a smoothly bounded pseudoconvex domain $\Omega$, denoted by $W \subset b\Omega$, is the   set of all boundary points where the   Levi determinant vanishes. Explicitly, it is the zero locus of the minimal restricted eigenvalue 
\begin{equation*}
W := \{p \in b\Omega : \lambda_1(p) = 0\}.
\end{equation*}
\end{definition}

\subsection{Alternative definition by bordered Hessian}
Computing restricted eigenvalues via point-wise orthonormal frames $E_p$ is useful locally, but it is not well suited for global arguments. In general, the complex tangent bundle need  not admit a global smooth orthonormal frame. We therefore replace the frame-dependent restricted Hessian by   a globally defined   bordered Hessian matrix.

\begin{definition} 
The  \textbf{bordered Hessian matrix} $H^{\text{bd}}_\rho $ associated with $\rho$ is the $(n+1) \times (n+1)$ Hermitian matrix 
\begin{equation*}
H^{\text{bd}}_\rho  = \begin{pmatrix} 0 &  \bar{\partial}\rho  \\ (\partial\rho)^T & H_\rho  \end{pmatrix}\ \ \text{on}\ \ b\Omega.
\end{equation*}
We define the corresponding \textbf{bordered Levi determinant}   by 
\begin{equation*}
 \Lambda_\rho  = -\det H^{\text{bd}}_\rho \ \ \text{on}\ \ b\Omega.
\end{equation*}
\end{definition}

The following known lemma shows that the bordered Levi determinant  agrees with the Levi determinant up to multiplication by a smooth positive factor.  For the reader's convenience, we provide the proof in Appendix \ref{ap}. 

\begin{lemma} \label{ld1}
Let $\rho$ be a smooth defining function for $\Omega$. Then, for every $p\in b\Omega$, the bordered Levi determinant satisfies \begin{equation*}  \Lambda_\rho(p) = |\partial\rho(p)|^2 \prod_{j=1}^{n-1}\lambda_j(p), \end{equation*} where $\lambda_1(p)\le \cdots\le \lambda_{n-1}(p)$ are the eigenvalues of the restricted complex Hessian $\widetilde H_\rho(p)$.    
\end{lemma}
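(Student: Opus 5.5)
The plan is to reduce to a block computation using a unitary change of coordinates adapted to $\partial\rho(p)$. Fix $p\in b\Omega$ and write $v=\overline{\partial\rho(p)}^T\in\mathbb C^n$, viewed as a column vector. Then $v\neq 0$, since $\nabla\rho\neq 0$ on $b\Omega$ and $\rho$ is real. By the definition via $\langle\bar\partial\rho(p),\xi\rangle_{\mathbb C^n}=0$, the complex tangent space $T^{1,0}_p(b\Omega)$ is exactly $v^\perp$ for the Hermitian inner product. Choose the orthonormal frame $E_p$ of $v^\perp$ and set $u=v/|v|$. Then $U=(u\mid E_p)$ is an $n\times n$ unitary matrix, and $|v|=|\partial\rho(p)|$.

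Next, conjugate the bordered Hessian by the unitary block matrix $\widehat U=\operatorname{diag}(1,U)$, which preserves the determinant since $|\det \widehat U|=1$ and the conjugation is $\widehat U^*(\cdot)\widehat U$. The border row $\bar\partial\rho(p)$ is $v^*$, so $v^*U=(v^*u,\,v^*E_p)=(|v|,0,\dots,0)$. Likewise the border column becomes $U^*v=(|v|,0,\dots,0)^T$; I will double-check that the conventions on $\partial\rho$ versus $(\partial\rho)^T$ in the definition indeed make the first column the conjugate transpose of the first row, so that the matrix is Hermitian. The lower block becomes
\[
U^*H_\rho U=\begin{pmatrix} u^*H_\rho u & u^*H_\rho E_p\\ E_p^*H_\rho u & \widetilde H_\rho(p)\end{pmatrix}.
\]

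The resulting $(n+1)\times(n+1)$ matrix has first row $(0,|v|,0,\dots,0)$ and first column $(0,|v|,0,\dots,0)^T$. Expanding the determinant along the first row, and then along the first column of the resulting minor, gives $\det H^{\mathrm{bd}}_\rho(p)=-|v|^2\det\widetilde H_\rho(p)$. The sign comes from the cofactor signs $(-1)^{1+2}$ and $(-1)^{2+1}$, whose product is $+1$. I need to recheck this carefully, since the claimed identity carries a minus sign. The cleanest way is to verify it on the $2\times 2$ principal piece: $\det\begin{pmatrix}0&a\\ a&b\end{pmatrix}=-a^2$, so the bordered block contributes $-|v|^2$ times the complementary minor. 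The entries $u^*H_\rho u$ and $u^*H_\rho E_p$ drop out because they are killed by the zeros in the border. Hence
\[
\Lambda_\rho(p)=-\det H^{\mathrm{bd}}_\rho(p)=|\partial\rho(p)|^2\det\widetilde H_\rho(p)=|\partial\rho(p)|^2\prod_{j=1}^{n-1}\lambda_j(p).
\]

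The only real obstacle is bookkeeping: confirming that the convention for the border entries makes the unitary change of basis act consistently on both the row and the column, and tracking the determinant sign in the Laplace expansion. The independence from the choice of $E_p$ is automatic, because the eigenvalues are unitary invariants, as noted before the lemma.
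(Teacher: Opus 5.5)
Your strategy is the same as the paper's: split off the complex normal with a unitary frame, then Laplace-expand. However, the ``bookkeeping'' step you deferred is exactly where it breaks. With $v=\overline{\partial\rho(p)}^{T}$ you have $v^*=\partial\rho(p)$. The border row of $H^{\mathrm{bd}}_\rho$, however, is $\bar\partial\rho(p)=(\rho_{\bar z_1},\dots,\rho_{\bar z_n})=v^{T}$; compare the paper's example, whose bordered Hessian has first row $(0,z,2|w|^2w)$. Hermitian symmetry is not the issue. The issue is that $v^T\xi=\sum_j\rho_{\bar z_j}\xi_j$ annihilates $\overline{T^{1,0}_p}$, not $T^{1,0}_p$. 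Hence $\widehat U^*H^{\mathrm{bd}}_\rho\widehat U$ has first row $(0,\,v^Tu,\,v^TE_p)$ with $v^TE_p\neq 0$ in general, and your expansion does not apply.

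The conjugation that does clear the border is $\operatorname{diag}(1,U^T)\,H^{\mathrm{bd}}_\rho\,\operatorname{diag}(1,\bar U)$. This is precisely what the paper's holomorphic change $z=U\tilde z$ produces, since there $\partial\tilde\rho=\partial\rho\,U$ and $H_{\tilde\rho}=U^TH_\rho\bar U$. Its complementary block is $E_p^TH_\rho\bar E_p$, which represents the usual Levi form $\sum_{j,k}\rho_{z_j\bar z_k}\xi_j\bar\xi_k$ on $T^{1,0}_p$. It is not $E_p^*H_\rho E_p$.

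This is not a harmless relabeling, because the two determinants can differ. So with $\widetilde H_\rho=E_p^*H_\rho E_p$ taken literally, the formula is false. Take $\rho=2\operatorname{Re}z_1+2\operatorname{Re}(iz_2)+|z_1|^2+|z_2|^2+2\operatorname{Re}(iz_1\bar z_2)$ at $p=0$, which is a plurisubharmonic defining function. Then $\partial\rho(p)=(1,i)$, $H_\rho(p)=\begin{pmatrix}1&i\\-i&1\end{pmatrix}$ and $E_p=(i,-1)^T/\sqrt2$. One finds $E_p^*H_\rho E_p=0$ and $E_p^TH_\rho\bar E_p=2$, while $\Lambda_\rho(p)=4=|\partial\rho(p)|^2\cdot 2$.

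The identity holds, and the paper's proof establishes it, once the restricted Hessian is read as $E_p^TH_\rho\bar E_p$. The paper makes this identification silently when it says the upper-left block of the transformed Hessian is $\widetilde H_\rho$. Equivalently, your computation is a correct proof for a different bordered matrix, the one with row $\partial\rho$ and column $(\bar\partial\rho)^T$. Either way, you need to fix the convention explicitly and conjugate by $\bar U$ rather than $U$.

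A minor point on signs: in the Laplace expansion the second cofactor sits in position $(1,1)$ of the minor. So the signs multiply to $(-1)^{1+2}(-1)^{1+1}=-1$. Your $2\times 2$ check gives the right sign, but the stated product $+1$ is wrong.
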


As an immediate consequence of Lemma \ref{ld1}, we have the following characterization of the weak locus in terms of the bordered Levi determinant.

\begin{remark}  Since $|\partial \rho| $ is nowhere zero on the boundary of the smooth domain,    the invariant $ \Lambda_\rho$ exhibits exactly the same vanishing property as  the corresponding Levi determinant in view of Lemma \ref{ld1}.  Consequently, 
  the weakly pseudoconvex locus is characterized  by 
\begin{equation*}
W = \{p \in b\Omega :  \Lambda_\rho(p) = 0\}.
\end{equation*}
\end{remark}

We now examine  the behavior of weak  loci for two explicit examples below.

\begin{example}
Consider the domain $\Omega = \{(z, w) \in \mathbb{C}^2 : |z|^2 + |w|^4 - 1 < 0\}$. The defining function $\rho = |z|^2 + |w|^4 - 1$ is decoupled. By a direct computation,  the bordered complex Hessian 
\[
H^{\text{bd}}_\rho(z,w)
=
\begin{pmatrix}
0 & z & 2|w|^2w\\
\overline z & 1 & 0\\
2|w|^2\overline w & 0 & 4|w|^2
\end{pmatrix}.
\]
Hence by the cofactor expansion
\[
 \Lambda_\rho = -\det H^{\text{bd}}_\rho
=
z\left(4|w|^2\overline z\right)
+
2|w|^2w\left(2|w|^2\overline w\right) 
=
4|w|^2\left(|z|^2+|w|^4\right).
\]
Restricted on the boundary,  we further  have
 \[
 \Lambda_\rho=4|w|^2\qquad \text{on}\ \ b\Omega.
\]
Therefore
\[
 \Lambda_\rho=0
\quad\Longleftrightarrow\quad
w=0.
\]
Combining this with the boundary equation gives 
\(|z| =1.
\)
Hence the weakly pseudoconvex locus is
\[
W
=
\{(z,0):|z|=1\} 
 \cong_{\text{diff}} S^1.
\]
\end{example}

In the second example below, the weakly pseudoconvex locus is diffeomorphic to   \(S^1\times S^1\). 
\begin{example}
 Consider the domain 
 $
\Omega=\{(z,w)\in\mathbb C^2: |w|^2+16\bigl(|z|^2-1\bigr)^4-\frac14 <0\}.
$
For the defining function $\rho = |w|^2+16\bigl(|z|^2-1\bigr)^4-\frac14$, a straight forward computation gives  the   bordered complex Hessian  
$$
H^{\text{bd}}_\rho(z,w)
=
\begin{pmatrix}
0 & 64({|z|^2}-1)^3  z & w\\
64({|z|^2}-1)^3\overline z & 64({|z|^2}-1)^2(4{|z|^2}-1) & 0\\
 \overline w & 0 & 1
\end{pmatrix}.
$$
 Hence 
\begin{equation}\label{lae}
            \Lambda_\rho (z, w) =  \ 64^2\,{|z|^2}({|z|^2}-1)^6
+
64({|z|^2}-1)^2(4{|z|^2}-1)|w|^2.
     \end{equation}

Since $
16({|z|^2}-1)^4 = \frac14 - |w|^2\le \frac14 
$ on $b\Omega$, we have
 $
|{|z|^2}-1|\le \frac{1}{2\sqrt2}, 
$ and in particular,
$
4{|z|^2}>1
$
on \(b\Omega\).
Therefore   both terms in \eqref{lae} are nonnegative on $b\Omega$.  Consequently, if $ \Lambda_\rho =0$, then the first term  forces
$|z| =1$. The second term vanishes there as well. 
Substituting this into the boundary equation gives 
$
|w|=\frac12.
$
Thus the weakly pseudoconvex  locus is 
$$
  W =
\left\{(z,w)\in\mathbb C^2: |z|=1,\ |w|=\frac12\right\} 
\cong_{\text{diff}} S^1\times S^1.
 $$
% Because $\Omega$ is defined by polynomials, it contains no non-trivial complex varieties in its boundary by the Diederich-Fornaess theorem, confirming it is of finite D'Angelo type everywhere.
\end{example}

\subsection{Rigid domains}
A   domain $\Omega\subset \mathbb C^n$ is called rigid if there exists  a real-valued  function $f:\mathbb C^{n-1}\rightarrow \mathbb R$ such that 
\begin{equation}\label{rn}
     \Omega  = \{(z,w)\in \mathbb C^{n-1}\times \mathbb C: \rho =  \operatorname{Re} w + f(z, \bar z)<0 \}.
\end{equation}
Since $\frac{\partial \rho}{\partial w} = \frac{1}{2}$ does not vanish, $\rho$ is a defining function with nonvanishing gradient on $b\Omega$. Hence $\Omega$ has smooth boundary whenever   $f$ is smooth.  Due to the independence  along the $\text{Im} w$ direction, such domains are never bounded.

We next compute the complex tangent space, the Levi form, and the weakly pseudoconvex locus of a  rigid domain. They will be used repeatedly in our later construction.

\begin{lemma}\label{ri}
   Let $\Omega $ be a smooth rigid domain in $\mathbb C^n$  defined in \eqref{rn}. The following holds.
\begin{enumerate}
  \item The complex tangent space at $p\in b\Omega$ is
 \begin{equation*}
T^{1,0}_{p}(b\Omega)
=\text{span}_{\mathbb C}\left\{ L_j
=
\frac{\partial}{\partial z_j}
-
2f_{z_j}(p)\frac{\partial}{\partial w}: 
 j=1,\ldots,n-1\right\}. 
\end{equation*}
 \item With respect to the  complex tangential frame
\(\{L_1,\ldots,L_{n-1}\}\) as above, the restricted complex Hessian of \(\rho\) is exactly
\begin{equation*}
\tilde H_\rho= H_f=\left(f_{z_j\bar z_k}\right)_{j,k=1}^{n-1}.
\end{equation*}
 In particular, $\Omega$ is pseudoconvex if and only if $f$ is plurisubharmonic.

    \item If $f$ is plurisubharmonic, then the weakly pseudoconvex locus is
\begin{equation*}
    \begin{split}
        W &= \{p\in b\Omega: \det H_f(p) =0\}\\
        &= \{(z, -f(z, \bar z) +i v)\in \mathbb C^{n}: \det H_f(z) =0, v\in \mathbb R  \}.
    \end{split}
\end{equation*} 
%In particular, if $W$ is nonempty, then the Reeb direction $\operatorname{Im } w $ is always contained in $W$.
\end{enumerate}

\end{lemma}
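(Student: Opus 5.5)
The plan is a direct computation from the defining function $\rho=\operatorname{Re} w+f(z,\bar z)$. Since $\operatorname{Re} w=\frac12(w+\bar w)$, we have
\[
\partial\rho=\Bigl(f_{z_1},\dots,f_{z_{n-1}},\tfrac12\Bigr).
\]
For part (1), a vector $\xi=\sum_j\xi_j\partial_{z_j}+\eta\,\partial_w$ is complex tangent exactly when $\sum_j f_{z_j}(p)\xi_j+\frac12\eta=0$, that is, when $\eta=-2\sum_j f_{z_j}(p)\xi_j$. Hence $\xi=\sum_j\xi_j L_j$. The vectors $L_1,\dots,L_{n-1}$ are linearly independent because their $\partial_{z}$-components are the standard basis, and the tangent space has complex dimension $n-1$. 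So they span it.

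For part (2), the key observation is that $\rho$ is linear in $w$, so the full complex Hessian $H_\rho$ has vanishing last row and column: $\rho_{w\bar w}=\rho_{z_j\bar w}=\rho_{w\bar z_k}=0$. Its upper-left block is $H_f$. Writing $\xi=\sum_j\xi_jL_j$, the Levi form is
\[
L_\rho(\xi,\bar\xi)=\sum_{j,k}f_{z_j\bar z_k}\xi_j\bar\xi_k ,
\]
since the $\partial_w$-components contribute nothing. Thus the matrix of the Levi form in the frame $\{L_j\}$ is exactly $H_f$.

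There is one subtlety: this frame is not orthonormal, whereas $\tilde H_\rho$ was defined with respect to an orthonormal frame. I would handle this with Sylvester's law of inertia. If $E_p=LA$ with $A$ invertible, then $\tilde H_\rho=A^*H_fA$. This congruence preserves the signature, and it scales the determinant by the positive factor $|\det A|^2$. So the statement "$\tilde H_\rho=H_f$ with respect to the frame $\{L_j\}$" is to be read in this frame-dependent sense. Pseudoconvexity means the Levi form is positive semidefinite at every boundary point. Since every $z\in\mathbb C^{n-1}$ occurs for the boundary point $(z,-f(z)+iv)$, this holds if and only if $H_f\ge0$ everywhere, which is the plurisubharmonicity of $f$.

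For part (3), I would combine the congruence above with the definition of $W$. The condition $\lambda_1(p)=0$ under positive semidefiniteness is equivalent to $\det\tilde H_\rho(p)=0$, hence to $\det H_f(z)=0$. As an alternative route one could use Lemma \ref{ld1}: expanding the bordered determinant, one expects $\Lambda_\rho=\frac14\det H_f$. Finally, I would parametrize the boundary as $b\Omega=\{(z,-f(z)+iv): z\in\mathbb C^{n-1},\ v\in\mathbb R\}$. Since $H_f$ depends only on $z$, this gives the stated description of $W$.

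The only step needing care is the frame issue in part (2), namely the passage from the non-orthonormal frame to the orthonormal one via congruence. Everything else is routine.
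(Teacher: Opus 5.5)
Your proposal is correct and follows essentially the same route as the paper: compute $\partial\rho=(f_{z_1},\dots,f_{z_{n-1}},\tfrac12)$, observe that $H_f$ is the only nonzero block of $H_\rho$, and conclude that $E^*H_\rho E=H_f$ for the frame matrix $E=\begin{pmatrix} I_{n-1}\\ -2f_z\end{pmatrix}$, with $\Lambda_\rho=\frac14\det H_f$ from the bordered Hessian as an alternative check. Your congruence remark $\tilde H_\rho=A^*H_fA$ adds something the paper omits: the paper uses this non-orthonormal frame even though $\tilde H_\rho$ was defined through an orthonormal one, and your observation is exactly what shows that the signature and the vanishing of the determinant are unaffected.
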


\begin{proof}
Since the complex normal gradient is 
\begin{equation*}
\partial \rho    
= \left(f_{z_1}, \dots, f_{z_{n-1}}, \frac12\right), 
\end{equation*}
a \((1,0)\)-vector
 $ 
L
=
\sum_{j=1}^{n-1}{\xi}_j\frac{\partial}{\partial z_j}
+
{\xi}_n\frac{\partial}{\partial w}$  
is complex tangential to \(b\Omega\) if and only if
\begin{equation*}
\sum_{j=1}^{n-1}f_{z_j}{\xi}_j+\frac12{\xi}_n=0.
\end{equation*}
In particular, for each $j=1, \ldots, n-1$,  setting   $L_j
=
\frac{\partial}{\partial z_j}
-
2f_{z_j} \frac{\partial}{\partial w}$, the above equation is satisfied. Since $\{L_j, j=1, \ldots, n-1\}$ are linearly independent, they form a basis of the complex tangent space \(T_p^{1,0}(b\Omega)\).

The complex Hessian of \(\rho\) has the block form
\begin{equation*}
 H_\rho
=
\begin{pmatrix}
H_f & 0\\
0 & 0
\end{pmatrix},
\qquad
H_f=\left(f_{z_j\bar z_k}\right)_{j,k=1}^{n-1}.
\end{equation*}
Let \(  E\) be the \(n\times(n-1)\) matrix whose \(j\)-th column is
the coefficient vector of \(L_j \), $j=1,\ldots,n-1$.
Then
\begin{equation*}
  E
=
\begin{pmatrix}
I_{n-1}\\
-2f_z
\end{pmatrix},
\qquad
f_z=
\begin{pmatrix}
f_{z_1} & \cdots & f_{z_{n-1}}
\end{pmatrix}.
\end{equation*}
The restricted complex Hessian in the frame
\(\{L_1,\ldots,L_{n-1}\}\) is therefore
\begin{align*}
 \tilde H_\rho =  E^* H_\rho  E
&=
\begin{pmatrix}
I_{n-1} & -2f_z^*
\end{pmatrix}
\begin{pmatrix}
H_f & 0\\
0 & 0
\end{pmatrix}
\begin{pmatrix}
I_{n-1}\\
-2f_z
\end{pmatrix}  = H_f.
\end{align*}
 Thus \(b\Omega\) is pseudoconvex precisely when \(f\) is plurisubharmonic.  Moreover,  the weakly pseudoconvex locus is given by
\begin{equation*}
W
=
\{(z,w)\in b\Omega:\det H_f(z)=0\},
\end{equation*}
provided that \(H_f\ge 0\).
\end{proof}

Alternatively, one can directly compute  the corresponding  bordered complex Hessian 
\begin{equation*}
\mathcal H_\rho^{\mathrm{bd}}
=
%\begin{pmatrix}
%0 & f_{\bar z_1} & \cdots & f_{\bar z_{n-1}} &  \frac12\\
%f_{z_1} & f_{z_1\bar z_1} & \cdots & f_{z_1\bar z_{n-1}} & 0\\
%\vdots & \vdots & \ddots & \vdots & \vdots\\
%f_{z_{n-1}} & f_{z_{n-1}\bar z_1} & \cdots
%& f_{z_{n-1}\bar z_{n-1}} & 0\\
% \frac12 & 0 & \cdots & 0 & 0
%\end{pmatrix}  = 
\begin{pmatrix}
    0 & f_{\bar z}&\frac{1}{2}\\
    f^T_{z} &H_f& 0\\
    \frac{1}{2} &0&0
\end{pmatrix}.
\end{equation*}
The cofactor expansion gives the Levi determinant
\begin{equation*}
\Lambda_\rho = -\det \mathcal H_\rho^{\mathrm{bd}}
=
\frac14\det H_f.
\end{equation*}
from which the form of   the weakly pseudoconvex locus   follows.

\subsection{D'Angelo type}
The D'Angelo type for smooth hypersurfaces  was  introduced by D'Angelo to measure  the maximum order of contact of complex analytic varieties with the hypersurfaces; see, for example,  \cite{DA82, DA93}. We first recall the notion of  vanishing order. Let $g=(g_1,\ldots,g_N)$ be a smooth vector-valued function defined near $0\in\mathbb C$,  with $g(0)=0$.  We define \begin{equation*} \nu(g)=\min_{1\le j\le N}\nu (g_j) . \end{equation*} Equivalently, $\nu(g)=m$ if 
\[ g(\zeta)=P_m(\zeta,\bar \zeta)+O(|\zeta|^{m+1}), \]
where $P_m$ is a nonzero homogeneous polynomial vector of degree $m$ in $\zeta$ and $\bar \zeta$.

\begin{definition}\label{det}
Let $M$ be a smooth hypersurface in $\mathbb C^n$ with  smooth defining function $\rho$.  The \textbf{D'Angelo type}  of $M$ at a point $p\in M$ is defined by  \begin{equation}\label{tau}
     \tau(p): =\sup_\gamma \frac{\nu(\rho\circ \gamma)}{\nu(\gamma -p)}, 
\end{equation}
    where the supremum is taken over all germs of nonconstant  holomorphic curves $\gamma: (\mathbb C, 0)\rightarrow (\mathbb C^n, p)$. The point $p$ is called a point of \textbf{finite type} if $\tau( p)<\infty$. Otherwise, it is called  is a point of infinite type. 
       
        The hypersurface \(M\) is said to be of finite D'Angelo type if
\[
\sup_{p\in M}\tau(p)<\infty.
\] 
A smoothly bounded domain  $\Omega$ in $\mathbb C^n$  is said to be of finite D'Angelo type if its boundary $b\Omega$ is of finite D'Angelo type. 
\end{definition}

For   a smoothly bounded pseudoconvex domain $\Omega$ and a boundary point $p$, the condition \(\tau(p)=2\) is equivalent to strict pseudoconvexity at $p$, where the Levi form is strictly positive. Hence the weakly pseudoconvex locus $W$  can be  equivalently  described in terms of the D'Angelo type as
\begin{equation}\label{aw}
W =\{p\in b\Omega: \tau(p)>2 \}.\end{equation}

If the boundary  $b\Omega$
contains a germ of a nonconstant
holomorphic curve, then the corresponding D'Angelo type is infinite everywhere on the curve. As a consequence of \eqref{aw},  the image  of the whole curve is also contained in the weakly pseudoconvex locus $W$, where the Levi determinant vanishes.    In the real-analytic category, this connection is especially rigid. D'Angelo proved in \cite{DA82} that, if \(b\Omega\) is real-analytic near \(p\), then \(\tau(p)=\infty\) if and only if \(b\Omega\) contains a germ of a nonconstant  holomorphic curve passing through \(p\).  
 This equivalence fails in general for smooth hypersurfaces, as shown by a simple example below. 

 \begin{example}
     Let $M: = \{(z, w)\in \mathbb C^2: \rho = \operatorname{Re} w + e^{-\frac{1}{|z|^2}} = 0\}$, where $e^{-\frac{1}{|z|^2}}$ is understood to be extended smoothly by $0$ at $z=0$. Since $\rho\circ \gamma_0  =e^{-\frac{1}{|\zeta|^2}}
 $ along  the complex line $\gamma_0 (\zeta) =(\zeta, 0)$ and $e^{-\frac{1}{|\zeta|^2}} $ vanishes to infinite order at $0$, it follows that $0$ is a point of infinite D'Angelo type. 
 
 On the other hand, $M$ contains  no nonconstant germ of a holomorphic curve through $0$. Indeed, if $\gamma(\zeta) = (z(\zeta), w(\zeta))$ is such a curve, then $z(0)=w(0)=0$ and 
     $$ \operatorname{Re} w(\zeta)  =- e^{-\frac{1}{|z(\zeta)|^2}} \ \ \text{for all}\ \ \zeta\ \text{near}\ 0.$$
For every positive integer $k$, taking $k$ derivatives with respect to $\zeta$  on both sides and then setting $\zeta=0$,       one immediately obtains that the holomorphic function  $$w(\zeta)\equiv 0.$$ Consequently,
\(
e^{-\frac{1}{|z(\zeta)|^2}}\equiv 0,
\)
and hence \[z(\zeta)\equiv 0.\] Therefore \(\gamma\) is constant, proving that no nonconstant holomorphic curve germ is contained in \(M\).
 \end{example}

 Thus,   a point of infinite D'Angelo type on a hypersurface does not  in general  imply the existence of a complex analytic subvariety contained in the hypersurface. Moreover, Kim--Thu \cite[Example 2]{KT15} and Nguyen--Chu \cite{NC} constructed    some smooth real hypersurfaces with points of infinite type at which the supremum in \eqref{tau} of Definition \ref{det} is not attained. Equivalently, such hypersurfaces admit no holomorphic curve having infinite order contact with the hypersurface  at a  point of infinite type. We also refer  to     \cite{FT18} for pseudoconvex examples of this phenomenon. On the other hand, Forn\ae ss, Lee and the last author \cite{FLZ14} proved the existence of formal complex analytic curves at points of infinite  type.  
 
Despite the simplicity of the  definition of D'Angelo type, determining whether a hypersurface is of  finite type seems technically difficult in practice, even in the  real-analytic setting   in $\mathbb C^2$, due to the involvement of the implicit function theorem. The following elementary-looking example illustrates just that.  %: even for a hypersurface defined explicitly by smooth bump-type functions over the intersection of two cylinders, it is not immediate how to determine whether the D'Angelo type is finite.

\medskip

\noindent\textbf{Question. } 
   Let $c_j\in \mathbb C$, $d_j\in \mathbb R$ and $a_j, r_j\in \mathbb R^+$, $j=1, 2$, be constants,  and let
   $$U_j: =\{(z, w)\in \mathbb C^2: |z-c_j|^2 + |\operatorname{Im} w-d_j|^2<r_j^2\}, \ \ j=1, 2$$
   be  two cylinders in $\mathbb C^2$. Set $U: =U_1\cap U_2$. Suppose that $U\ne \emptyset$  and consider the  hypersurface $M$  in $U$ defined  by 
   $$M =\{ (z, w)\in U: \operatorname{Re}w  =   \sum_{j=1}^2 a_j e^\frac{1}{|z-c_j|^2 + |\operatorname{Im} w  -d_j|^2-r_j^2} \}.  $$
   Is   $M$  of finite D'Angelo type at every point of $M$? 
 
\medskip
   
Note that the hypersurface $M$ in the above is   real-analytic. However, since $M$ is not a closed variety, the global result of Diederich--Forn\ae ss does not apply to establish finite type. We  attempted to compute the type directly, but unfortunately failed to find an effective method.  When only one cylinder is involved, however, the finite type of the hypersurface within the cylinder can be established as follows.
\begin{example}
      Let $c\in \mathbb C$, $d\in \mathbb R$, and $r\in \mathbb R^+$ be constants, and let  $$U:  =\{(z, w)\in \mathbb C^2: |z-c|^2 + |\operatorname{Im} w-d|^2<r^2\}.$$ Consider the  hypersurface $M$  defined by 
   $$M =\{(z, w)\in U: \operatorname{Re} w =   e^\frac{1}{|z-c|^2 + |\operatorname{Im} w -d|^2-r^2} \}. $$ Then $M$  of finite D'Angelo type at every point of $M$.
\end{example}

\begin{proof}
The hypersurface $M$ is real-analytic in $U$. Suppose, by contradiction, that $M$ is of infinite D'Angelo type at some point $p\in M$. By D'Angelo's theorem, there exists a nonconstant holomorphic curve
$$
    \gamma(\zeta)=(z(\zeta),w(\zeta))
$$
such that $\gamma(0)=p$ and $\gamma(\zeta)\in M$ for all $\zeta$ sufficiently close to $0$.

Write $u=\operatorname{Re}w$ and $v=\operatorname{Im}w$. Along the curve $\gamma$, the defining equation of $M$ gives
$$
    u(\zeta) =
    e^{ \frac{1}{|z(\zeta)-c|^2+|v(\zeta)-d|^2-r^2} }.
$$
Since $(z(\zeta),w(\zeta))\in U$, we have $0<u(\zeta)<1$. Taking logarithms and then reciprocals, we obtain
\begin{equation*} 
     \frac{1}{\ln u(\zeta)}
    =
    |z(\zeta)-c|^2+|v(\zeta)-d|^2-r^2.
\end{equation*}
 Since $z$ and $w$ are holomorphic, while $u$ and $v$ are harmonic conjugates, applying the Laplacian with respect to $\zeta$ in the above gives
$$
    F''(u)|\nabla u|^2
    =
    4|z'|^2+2|\nabla v|^2,
$$
where 
$$
    F(s):=\frac{1}{\ln s},\qquad 0<s<1.
$$
Making use of the equalities 
$ 
    |\nabla u|^2=|\nabla v|^2=|w'|^2,
$ 
we further obtain
\begin{equation}\label{hh1}
    g(u(\zeta))|w'(\zeta)|^2=4|z'(\zeta)|^2,
\end{equation}
where
\begin{equation}\label{g}
       g(s):=F''(s)-2
    =
    \frac{2+\ln s}{s^2(\ln s)^3}-2,\qquad 0<s<1.
\end{equation}

We first consider the case  $z'\equiv 0$. Then \eqref{hh1} implies
$$
    g(u(\zeta))|w'(\zeta)|^2\equiv 0.
$$
If $w'\equiv 0$, then $\gamma$ is constant, a contradiction. Otherwise, on an open set where $w'\neq 0$, we have
$$
    g(u(\zeta))\equiv 0.
$$
Since $u=\operatorname{Re} w$ and $w$ is holomorphic and nonconstant on this set,  $u$ is  nonconstant there as well. Therefore, the  image of $u$ contains a nontrivial interval. Hence $g$ must vanish identically on an interval. This is impossible from the explicit expression of $g$ in \eqref{g}.

It remains to consider the case $z'\not\equiv 0$. Then \eqref{hh1} also implies that $w'\not\equiv 0$. Hence there exists a point near which both $z'$ and $w'$ are nonzero. On a sufficiently small neighborhood, say $V$,  of such a point, we may take logarithms in \eqref{hh1}. This gives
$$
    \ln g(u(\zeta))+\ln |w'(\zeta)|^2
    =
    \ln 4+\ln |z'(\zeta)|^2.
$$
Since $z'$ and $w'$ are holomorphic and nonvanishing on this neighborhood, $\ln |z'|^2$ and $\ln |w'|^2$ are harmonic. Therefore,
$$
    \Delta(\ln g(u(\zeta)))=0.
$$
Noting that  $u$ is harmonic, this further yields
$$
    G''(u(\zeta))|\nabla u(\zeta)|^2=0,
$$
where  
$$
    G(s):=\ln g(s)
    =
    \ln\left(\frac{2+\ln s}{s^2(\ln s)^3}-2\right).
$$
But $w'\neq 0$ on the chosen neighborhood, so $|\nabla u|^2=|w'|^2\neq 0$. Hence
$$
    G''(u(\zeta))\equiv 0 \ \ \text{on}\ \ V.
$$
Since $u$ is nonconstant on $V$ by assumption, its image contains an interval. Thus $G''\equiv 0$ on an interval, and consequently $$G(s)=as+b$$ there for some constants $a,b$. This contradicts the explicit expression of $G$, and thus completes the proof. 
\end{proof}

\section{Whitney extension theorem}\label{wh}

The classical Whitney extension theorem may be stated as follows. Let $E$ be a closed subset of $\mathbb R^n$. Suppose that $\{f_\alpha\}$ is a collection of functions on $E$, indexed by all multi-indices $\alpha$, satisfying the following compatibility condition: for every $m\in\mathbb N$ and every multi-index $\alpha$ with $|\alpha|\le m$,
$$
    f_\alpha(x)
    =
    \sum_{|\beta|\le m-|\alpha|}
    \frac{f_{\alpha+\beta}(y)}{\beta!}(x-y)^\beta
    +
    R_\alpha(x,y) 
    \qquad\text{for all}\ \  x,y\in E,
$$
where the function 
$ 
    R_\alpha(x,y)=o\left(|x-y|^{m-|\alpha|}\right)
$ 
uniformly as $x\to y$ in $E$. Then there exists a smooth function $F$ on $\mathbb R^n$ such that
$$
    F=f_0 \quad \text{on } E,
    \qquad
    D^\alpha F=f_\alpha \quad \text{on } E,
$$
for every multi-index $\alpha$. %Moreover, $F$ may be chosen to be real-analytic at every point of $\mathbb R^n\setminus E$. 
See, for instance, \cite{Wh34, Ma66, Na85, Ho90}.

We shall need the following  variant of the  Whitney extension theorem for existence of   nonnegative  smooth functions that are flat on a given closed set $E$ in the later construction. Recall that a smooth function $f$ is said to be flat at a point   if all its derivatives vanish at that point.

\begin{proposition}\label{Wf}
    Let $E $ be a closed subset of $\mathbb R^n$. Then there exists a smooth function $\psi $ on $\mathbb R^n$ such that
    \begin{enumerate}
        \item $\psi\ge 0$ on $\mathbb R^n$;

        \item $\psi^{-1}(0)= E$;

        \item $\psi$ is flat at every point of $E$.

%        \item  $\psi$  is    real-analytic everywhere away from   $   E$.     
    \end{enumerate}
    \end{proposition}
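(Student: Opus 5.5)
The plan is the classical construction of a smooth function whose zero set is exactly a prescribed closed set, through a Whitney-type decomposition of the complement. If $E=\mathbb R^n$ we take $\psi\equiv 0$, and if $E=\emptyset$ we take $\psi\equiv 1$. Otherwise the open set $U=\mathbb R^n\setminus E$ is nonempty, and we write it as a countable union of open balls $B_k=B(x_k,r_k)$. We choose the radii so that $\overline{B_k}\subset U$ and $r_k\le \tfrac12\operatorname{dist}(x_k,E)$; a Whitney cube decomposition (or the rational centers $x_k\in U\cap\mathbb Q^n$ with radius $r_k=\tfrac12\operatorname{dist}(x_k,E)$) gives such a cover. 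For each $k$ we fix a nonnegative bump $\phi_k\in C_c^\infty(\mathbb R^n)$ with $\phi_k>0$ exactly on $B_k$; for instance
\[
\phi_k(x)=\chi\!\left(\frac{x-x_k}{r_k}\right),\qquad \chi(y)=e^{-1/(1-|y|^2)} \text{ for } |y|<1,\quad \chi(y)=0 \text{ otherwise}.
\]

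We then set
\[
\psi=\sum_{k} \varepsilon_k \phi_k ,\qquad \varepsilon_k=\frac{2^{-k}}{1+\max_{|\alpha|\le k}\|D^\alpha\phi_k\|_{L^\infty}}>0 .
\]
For each fixed $\alpha$, the series $\sum_k \varepsilon_k D^\alpha\phi_k$ is dominated by $\sum_{k\ge|\alpha|}2^{-k}$ plus finitely many bounded terms, so it converges uniformly on $\mathbb R^n$. Hence $\psi\in C^\infty(\mathbb R^n)$ and we may differentiate term by term. Property (1) is immediate. For (2), every point of $U$ lies in some $B_k$, where $\phi_k>0$, so $\psi>0$ there. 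Each $\phi_k$ vanishes on $E$, because $\operatorname{supp}\phi_k=\overline{B_k}\subset U$, so $\psi^{-1}(0)=E$.

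The main point, and the step needing care, is (3): flatness at every point of $E$, including non-interior points such as points of a Cantor set. For interior points of $E$ this is trivial, since $\psi$ vanishes on a neighborhood. For a general $p\in E$, we first note that $D^\alpha\psi(p)=\sum_k\varepsilon_kD^\alpha\phi_k(p)=0$, since every $\phi_k$ vanishes together with all its derivatives outside $B_k$ and $p\notin \overline{B_k}$. The term-by-term differentiation is justified by the uniform convergence above, so every derivative of $\psi$ of every order vanishes at $p$, which is exactly flatness. Note that we never need the quantitative bound $D^\alpha\psi(x)=O(\operatorname{dist}(x,E)^N)$: smoothness of the uniformly convergent sum already forces $D^\alpha\psi$ to be continuous, and it is zero pointwise on $E$. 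The only genuine obstacle is choosing the weights $\varepsilon_k$ so that every derivative series converges uniformly even though the $r_k\to0$ make $\|D^\alpha\phi_k\|_\infty\sim r_k^{-|\alpha|}$ blow up; the diagonal choice of $\varepsilon_k$ above handles this.

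If one also wants the optional strengthening that $\psi$ be real-analytic off $E$, this construction does not give it, and we would instead appeal to the Whitney extension theorem with $f_\alpha\equiv0$ on $E$ in its analytic-off-$E$ form. That is not needed for Proposition \ref{Wf} as stated.
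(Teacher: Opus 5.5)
Your proof is correct and is essentially the paper's argument: exponential bumps on a countable family of balls covering $\mathbb R^n\setminus E$, weights chosen against derivative bounds so that every derivative series converges uniformly, and flatness on $E$ by term-by-term differentiation. The main difference is that you weight by global sup norms of the compactly supported bumps, which makes the local finiteness of the cover (the paper's Lemma \ref{covering}) unnecessary; your closing aside is off, however, since the Whitney extension of the zero jet may simply be $F\equiv 0$ and so does not by itself produce $\psi^{-1}(0)=E$.
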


\begin{proof}
   Since  $\mathbb R^n$ is paracompact (i.e.,  every open cover has a locally finite refinement) and $\mathbb R^n\setminus E$ is open, there exists a countable locally finite  collection of open balls $ \{B_{r_j}(c_j)\}_{ j\in \mathbf N}$ in $\mathbb R^n$ centered at $c_j$ with radius $r_j>0$, such that 
   $$\mathbb R^n\setminus E = \bigcup_{j=1}^\infty B_{r_j}(c_j).$$
   See Lemma \ref{covering}. Here, local finiteness means that  every point   has a neighborhood intersecting only finitely many balls in the cover.

   For each $j\in \mathbb N$, define
   $$ \psi_j(x) =   \begin{cases} e^\frac{1}{|x-c_j|^2-r_j^2},\ \ &x\in B_{r_j}(c_j); \\0,  \ \ &\text{otherwise.}         \end{cases}
     $$
    Then $\psi_j\in C^\infty(\mathbb R^n)$, $\psi_j>0$ on $B_{r_j}(c_j)  $ and is flat everywhere else. Choose constants $a_j> 0$  such that 
     \begin{equation}\label{ac}
         a_j \sup_{|x|<j} \sum_{\alpha\in \mathbb N^n, |\alpha|\le j}|D^\alpha \psi_j(x)|\le 2^{-j},
     \end{equation} 
and set  $$\psi: = \sum_{j=1}^\infty a_j \psi_j.$$
By the local finiteness of the covering, the function $\psi$ is  smooth on $C^\infty(\mathbb R^n\setminus E)$. 
   
 To see that $\psi$ is smooth across  $E$, we show that for each   $R>0$ and  multi-index $\lambda \in \mathbb{N}^n$,    
$$  \sum_{j=1}^\infty  \sup_{|x|<R}  |a_j D^\lambda \psi_j(x)|    $$
converges uniformly on $\{|x|<R\}$. Indeed, let $N \in \mathbb{N}$ be such that    $N \ge \max\{R,  |\lambda|\}$. For $q > m > N$ we have
\begin{equation*} \begin{split}
      \sum_{j=m+1}^q  \sup_{|x|<R} | a_j D^\lambda \psi_j(x) |          &\le \sum_{j=m+1}^q a_j \sup_{|x|<N} \sum_{\nu \in \mathbb{N}^n, |\nu|\le N} |D^\nu \psi_j(x)| \\ 
    &\le \sum_{j=m+1}^q a_j \sup_{|x|<j} \sum_{\nu \in \mathbb{N}^n,  |\nu|\le j} |D^\nu \psi_j(x)|  
     \le \sum_{j=m+1}^q 2^{-j},  \end{split}\end{equation*}
where   the last step follows from  \eqref{ac}.  Hence the derivative series converges uniformly on compact subsets of $\mathbb R^n$. Therefore $\psi\in C^\infty(\mathbb R^n)$ and \begin{equation}\label{dpsi}
     D^\lambda\psi = \sum_{j=1}^{\infty}a_jD^\lambda\psi_j 
\end{equation} for every multi-index $\lambda$.

We now verify the desired properties of $\psi$. Since each $\psi_j\ge 0$ and each $a_j>0$, we have $\psi\ge 0$ on $\mathbb R^n$. 
Moreover, $$ \psi(p)=0 \iff \psi_j(p)=0 \ \text{for all } j \iff p\notin B_{r_j}(c_j) \ \text{for all } j \iff p\in E. $$ Thus $\psi^{-1}(0)=E$. Finally, if $p\in E$, then $p\notin B_{r_j}(c_j)$ for every $j$, and hence each $\psi_j$ is flat at $p$.
The flatness of $\psi$ on  $p \in E$ then follows from this together with \eqref{dpsi}. 
\end{proof} 

From the proof, one also sees that the function $\psi$ constructed above is real-analytic at every point of $$ \mathbb R^n\setminus \bigcup_{j=1}^{\infty} bB_{r_j}(c_j), $$
and is not real-analytic elsewhere.

\section{Unbounded convex models in dimension two}\label{2t}
In this section, we shall construct a   convex domain in $\mathbb C^2$ whose weak locus forms a trivial \(\mathbb R\)-family over the prescribed
closed set \(E\subset \mathbb R\), and hence is diffeomorphic to \(E\times\mathbb R\). Note that  $E$   may not be  real-analytic. Thus   we ought to carry out  the construction entirely within the smooth   category. 

Recall that a domain $\Omega\subset\mathbb R^n$ is called {\bf convex} if, for any two points  in $\Omega$, the line segment joining them is contained in $\Omega$.  For a domain $\Omega\subset\mathbb C^n$, convexity is understood after identifying $\mathbb C^n$ with $\mathbb R^{2n}$. 
A $C^2$ function $\rho$ on a convex   domain $D\subset\mathbb R^n$ is called {\bf convex} if  its real Hessian $D^2\rho$ is positive semi-definite at every point of $D$. %Consequently, if $\rho$ is convex on $D$, then each sublevel set
%$$   \{x\in D:\rho(x)<c\} $$ is convex. 
In particular,   a domain   defined as a sublevel set of a convex function on a convex ambient domain  is convex.

 \begin{proposition}\label{r2}
Let \(E\subset \mathbb R\subset \mathbb C\) be a closed set. For any integer \(m\ge 2\), there exists a  smooth  convex domain \(\Omega\subset \mathbb C^2\) such that the following hold.
\begin{enumerate}
    \item The weakly pseudoconvex locus $W$ of $\Omega$ is exactly
\[
W=\{(z,w)\in b\Omega: \ \operatorname{Re} z\in E, \ \operatorname{Im} z =0\}\cong_{\text{diff}} E\times \mathbb R;
\]
\item The D'Angelo type satisfies
\[
\tau(p)= 2m \qquad
\text{for every } \ \ p\in W.
\]
\end{enumerate}
 \end{proposition}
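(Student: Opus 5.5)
The plan is to take $\Omega$ to be a rigid domain as in \eqref{rn} with a separable convex potential. Write $z=x+iy$ and set
\[
\Omega=\{(z,w)\in\mathbb C^2:\ \rho=\operatorname{Re} w+f(z)<0\},\qquad f(z)=g(x)+y^{2m}.
\]
The function $g$ is built from Proposition \ref{Wf} with $n=1$. Let $\psi\ge 0$ be smooth on $\mathbb R$, with $\psi^{-1}(0)=E$ and $\psi$ flat on $E$. Define $g(x)=\int_0^x\!\int_0^t\psi(s)\,ds\,dt$, so that $g''=\psi$.

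Then $f$ is smooth, and its real Hessian is $\operatorname{diag}(\psi(x),\,2m(2m-1)y^{2m-2})\ge 0$. Hence $f$ is convex on $\mathbb R^2$. Since $\rho$ is $f$ plus a linear function of $\operatorname{Re} w$, $\rho$ is convex on $\mathbb C^2$ and $\Omega$ is a smooth convex domain. In particular $f$ is subharmonic, and Lemma \ref{ri} gives
\[
W=\{(z,-f(z)+iv):\ \Delta f(z)=0,\ v\in\mathbb R\}.
\]
Now $\Delta f=\psi(x)+2m(2m-1)y^{2m-2}$ is a sum of two nonnegative terms. Because $m\ge2$, it vanishes exactly when $x\in E$ and $y=0$. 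This yields item 1. The diffeomorphism $W\cong_{\text{diff}} E\times\mathbb R$ is $(x,v)\mapsto (x,-g(x)+iv)$ restricted to $x\in E$; it is a smooth graph map with smooth inverse $(z,w)\mapsto(\operatorname{Re} z,\operatorname{Im} w)$.

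For item 2, fix $p=(x_0,w_0)\in W$ with $x_0\in E$, and write $\ell(z)=g(x_0)+g'(x_0)(x-x_0)$. Then near $p$,
\[
\rho=\operatorname{Re}\bigl(w-w_0+g'(x_0)(z-x_0)\bigr)+h(x)+y^{2m},
\]
where $h(x)=g(x)-\ell(x)$ is flat at $x_0$ because $g''=\psi$ is flat there.

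The lower bound $\tau(p)\ge 2m$ comes from the curve $\gamma(\zeta)=(x_0+\zeta,\ w_0-g'(x_0)\zeta)$. Along it, $\rho\circ\gamma=h(x_0+\operatorname{Re}\zeta)+(\operatorname{Im}\zeta)^{2m}$. The first term is $O(|\zeta|^N)$ for every $N$, and the second is a nonzero homogeneous polynomial of degree $2m$. So $\nu(\rho\circ\gamma)=2m$ while $\nu(\gamma-p)=1$.

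The upper bound $\tau(p)\le 2m$ is the step I expect to need the most care. Take an arbitrary nonconstant germ $\gamma=(z(\zeta),w(\zeta))$ through $p$. If $z$ is constant, then $\rho\circ\gamma=\operatorname{Re}(w-w_0)$, which has order exactly $\nu(w-w_0)=\nu(\gamma-p)$, so the ratio is $1$. Otherwise write $z-x_0=c\zeta^a+O(|\zeta|^{a+1})$ with $c\ne0$, so that $\nu(\gamma-p)\le a$.

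In that case $\rho\circ\gamma$ splits into a harmonic part $\operatorname{Re}H(\zeta)$, where $H(\zeta)=w(\zeta)-w_0+g'(x_0)(z(\zeta)-x_0)$ is holomorphic, a flat part $h(x(\zeta))=O(|\zeta|^N)$ for all $N$, and the part $y(\zeta)^{2m}=(\operatorname{Im} z)^{2m}$. Expanding $\bigl(\operatorname{Im}(c\zeta^a)\bigr)^{2m}=(2i)^{-2m}(c\zeta^a-\bar c\bar\zeta^a)^{2m}$, the mixed monomial $\zeta^{am}\bar\zeta^{am}$ appears with coefficient $(2i)^{-2m}\binom{2m}{m}(-1)^m|c|^{2m}\ne0$.

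This monomial cannot be cancelled by any of the other contributions:
\begin{itemize}
\item $\operatorname{Re}H$ contains only pure monomials $\zeta^k$ and $\bar\zeta^k$;
\item the higher-order terms of $z-x_0$ contribute only in total degree greater than $2ma$;
\item the flat part $h(x(\zeta))$ contributes nothing at any finite order.
\end{itemize}
Hence $\nu(\rho\circ\gamma)\le 2ma$, and the ratio in \eqref{tau} is at most $2ma/\nu(\gamma-p)$. This alone is not enough when $\nu(\gamma-p)<a$, which happens when $w$ has lower order than $z$.

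The remaining case is $\nu(w-w_0)=b<a$. Then $\operatorname{Re}H$ has the nonzero pure term $\operatorname{Re}(d\zeta^b)$ in degree $b$, and no other contribution reaches degree $b<2ma$. So $\nu(\rho\circ\gamma)=b$ and the ratio is $1$. In all cases the ratio is at most $2m$, so $\tau(p)=2m$ for every $p\in W$.
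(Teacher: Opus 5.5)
Your proposal is correct and follows essentially the same route as the paper. The construction is the same rigid potential $y^{2m}+\Phi(x)$ with $\Phi''=\psi$ flat on $E$, and the weak locus comes from the same Laplacian computation via Lemma \ref{ri}. For the type you use the same normalization, the same harmonic-versus-nonharmonic cancellation argument, and the same complex tangent line for the lower bound. Your explicit check that the coefficient of $\zeta^{am}\bar\zeta^{am}$ is nonzero is a slightly more careful version of the paper's assertion that the leading nonharmonic term of $(\operatorname{Im}\tilde z)^{2m}$ has order $2m\nu(\tilde z)$.
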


 \begin{proof}
     
 We shall define a rigid domain $\Omega$ by    \begin{equation*} 
   \Omega =\{(z, w)\in \mathbb C^2:     \operatorname{Re} w + f(z, \bar z)<0\}.  
 \end{equation*} 
Here in terms of coordinates $z=x+iy$ and $w=u+iv$, the  potential function $f$ will be chosen in  the  decoupled form   \begin{equation}\label{sf}
f(z, \bar z) = y^{2m} + \Phi(x),
\end{equation} 
where $m\in \mathbb Z^+$, and   $\Phi: \mathbb R\rightarrow \mathbb R$ is a smooth function.  Note that such a domain is always smooth but unbounded. 
\medskip

\noindent\emph{Choice of $\Phi$.} For the  given closed set $E\subset \mathbb R$, we shall construct $\Phi$ such that $\Phi''(x) = 0$ exactly on $E$ and $\Phi''(x) > 0$ everywhere else.  In fact, by   the Whitney extension theorem Proposition \ref{Wf}, there exists a smooth, non-negative function $\psi $ on $\mathbb{R}$ such that 
\begin{equation}\label{ei}
    \psi(x) = 0 \iff x \in E,
\end{equation} and $\psi$ is  flat at each point of $E$.     %To facilitate the global bounding phase, we additionally require $\psi(x)$ to be constant outside a large compact neighborhood, ensuring controlled growth at infinity for the global gluing.??? 
Integrate $\psi$  twice and set
\begin{equation}\label{di}
\Phi(x) = \int_0^x \int_0^t \psi(s) \, ds \, dt.
\end{equation}
By the Fundamental Theorem of Calculus,  $$\Phi'' = \psi\ \ \text{on}\ \ \mathbb R.$$  Thus   $\Phi $ satisfies the desired properties. Moreover, since $$\Phi' =\int_0^x \psi(t) \, dt $$ and $\psi (x)\ge 0$, inspecting separately the cases $x<0$ and $x\ge 0$ gives 
\begin{equation}\label{pp}
    x\Phi'(x)\ge 0\ \ \text{on}\ \ \mathbb R.
\end{equation}
This property will be crucially used in Sections \ref{b2} and \ref{b2'}. 
\medskip

\noindent\emph{Convexity.}  %Recall that a function is  convex if its real Hessian matrix is positive semi-definite. 
In the real coordinate frame $(x, y, u, v) \in \mathbb{R}^4$,    the real Hessian matrix of the defining function $\rho(z,w):  =\operatorname{Re} w + f(z, \bar z) $ is  
 \begin{equation*}
 D^2\rho (x, y, u, v) = \begin{pmatrix} \psi (x) & 0 & 0 & 0 \\ 0 & 2m(2m-1)y^{2m-2} & 0 & 0 \\ 0 & 0 & 0 & 0 \\ 0 & 0 & 0 & 0 \end{pmatrix}, 
 \end{equation*}
 which is positive semi-definite for all $m \ge 2$.  Thus, $\Omega$  is a sublevel set of the convex function $\rho$ in $\mathbb R^4$, and hence is a convex domain. In particular,  $\Omega$ is pseudoconvex. 
\medskip

\noindent\emph{Weak locus.} According to Lemma  \ref{ri}, the weak locus of $ \Omega $ is
\begin{equation*}
W=   \{(z,w)\in b\Omega:  
       f_{z\bar z}(z)=0\}.
\end{equation*}
By a direct computation, one has  
 \begin{equation*}
f_{z\bar{z}} = \frac{1}{4}\Delta f = \frac{1}{4}\left[ 2m(2m-1)y^{2m-2} + \psi(x) \right].
\end{equation*}
  Since both terms in the bracket are nonnegative, the Levi form  vanishes if and only if  
\begin{equation*}
y = 0 \quad \text{and} \quad \psi(x) = 0.
\end{equation*}
Combining this with \eqref{ei}, one has the weak locus 
\begin{equation*}
    \begin{split}
        W= &\{(z, w)\in b\Omega: \operatorname{Re} z\in E, \operatorname{Im} z=0 \}\\
        =& \{(x, 0, -\Phi(x), v)\in \mathbb R^4: x\in E\} \cong_{\text{diff}} E\times \mathbb R. 
    \end{split}
\end{equation*}

 \medskip

\noindent\emph{D'Angelo type.} At points off the weak locus $W$, since $\Omega$ is strictly pseudoconvex, the D'Angelo type is equal to $2$. To calculate the type effectively at a fixed  point $$p = (z_p, w_p) = (x_p+i0, -\Phi(x_p)+iv_p)\in W$$   where $x_p \in E$, we need to normalize the coordinates system.
   %We perform a real Taylor expansion of $\phi(x)$ around the center $x_p$. 

Since $\Phi''(x) = \psi(x)$ and $\psi$ is flat at $x_p\in E$, the formal Taylor series of $\Phi$  around the center $x_p$ terminates exactly after the linear term, leaving a smooth remainder function $R(x)$ that is   flat at $x_p$:
\begin{equation*}
\Phi(x) = \Phi(x_p) + \Phi'(x_p)(x - x_p) + R(x).
\end{equation*}
 Substituting this expansion back into our  defining function yields 
\begin{equation*}
\rho(z, w) = \operatorname{Re}\,w + y^{2m} + \Phi(x_p) + \Phi'(x_p)(x - x_p) + R(x).
\end{equation*}
Noting that $x_p$, $\Phi(x_p)$ and $\Phi'(x_p)$ are  real,    we collect the linear and constant terms into a single pluriharmonic expression:
\begin{equation*}
\operatorname{Re}\,w + \Phi(x_p) + \Phi'(x_p)\operatorname{Re}(z - z_p) = \operatorname{Re}\left[ w + \Phi(x_p) + \Phi'(x_p)(z - z_p) \right].
\end{equation*}
We define a global biholomorphic coordinate transformation $(z, w) \to (\tilde{z}, \tilde{w})$ via
\begin{align*}
&\tilde{z} = z - z_p; \\
&\tilde{w}  = w + \Phi(x_p)  + \Phi'(x_p)(z - z_p).
\end{align*}
 In these normalized coordinates, the defining function simplifies   to 
\begin{equation*}
\tilde{\rho}(\tilde{z}, \tilde{w}) = \operatorname{Re}\,\tilde{w} + \tilde{y}^{2m} + \tilde{R}(\tilde{x}),
\end{equation*}
where $\tilde{R}(\tilde{x})$ is   flat at $\tilde{x} = 0$.  
%For this reason, we shall only need to investigate the  D'Angelo type of   $$\rho(z, w) = \operatorname{Re}\,w + y^{2m} + R(x) = 0 $$ at the origin, where the smooth function $R$ is flat at $0$. 

Let 
\begin{equation*} \gamma(\zeta)=({\tilde z}(\zeta),{\tilde w}(\zeta)) \end{equation*} be a nonconstant holomorphic curve with \(\gamma(0)=0\). Restricting $\tilde \rho$ on $\gamma$, we have
$$ ({\tilde \rho}  \circ \gamma)(\zeta) =\operatorname{Re} {\tilde w}(\zeta) +    |\operatorname{Im} {\tilde z}(\zeta) |^{2m} + {\tilde R}(\operatorname{Re}{\tilde z}(\zeta)).$$
Since \({\tilde R}\) is flat at \(0\), the pullback  ${\tilde R}(\operatorname{Re}{\tilde z}(\zeta)) $ vanishes to infinite order. Thus it does not affect the D'Angelo type.

If $\nu({\tilde w}(\zeta))< \nu({\tilde z}(\zeta))$, then \(\operatorname{Re}{\tilde w}(\zeta)\) gives the leading term of \({\tilde \rho}\circ\gamma\), and hence \begin{equation*} \frac{\nu({\tilde \rho}\circ\gamma)} {\nu(\gamma)} = 1. \end{equation*}  
We next consider  $\nu({\tilde w}(\zeta))\ge  \nu({\tilde z}(\zeta))$. 
The leading nonharmonic term in \(  (\operatorname{Im}{\tilde z}(\zeta))^{2m} \) has order \(2m\nu({\tilde z}(\zeta))\). The term \(\operatorname{Re}{\tilde w}(\zeta)\) is harmonic, and therefore can  not cancel the leading nonharmonic term  in   $(\operatorname{Im}{\tilde z}(\zeta))^{2m} $.  
Hence      $   \nu({\tilde \rho}\circ\gamma)\le 2m\nu({\tilde z}(\zeta)),   $ and \begin{equation*} \frac{\nu({\tilde \rho}\circ\gamma)} {\nu(\gamma)}\le \frac{ 2m\nu({\tilde z}(\zeta))}{  \nu({\tilde z}(\zeta))} =2m. \end{equation*}  
  On the other hand, taking $\gamma(\zeta)=(\zeta,0),$  we obtain \begin{equation*} ({\tilde \rho}\circ\gamma) (\zeta) = (\operatorname{Im}\zeta)^{2m} + {\tilde R}(\operatorname{Re}\zeta). \end{equation*} Altogether, we have shown that the D'Angelo type at $0$ is \begin{equation*} \tau(0)=2m.
\end{equation*}
The proof is complete. 
 \end{proof}

There are two remarks in order concerning the proof of Proposition \ref{r2}.
\begin{enumerate}
\item As shown in the proof of Proposition \ref{r2},   the flatness of $\psi$ at points in $E$ makes it necessary to add the extra convex term $|y|^{2m}$ to the potential $f$   in \eqref{sf}. This term ensures that the boundary has finite D'Angelo type. Without it, the complex tangent line would have infinite  type at every point lying over $E$. 

    \item 
Alternatively, since the domain $\Omega$ is convex,   the  D'Angelo   type can be checked by the corresponding order of contact by  complex lines according to a theorem of Boas-Straube \cite{BS}. We leave the details to the interested reader.

\end{enumerate}

\section{Local pseudoconvex models in higher dimensions}\label{np}

In this section, we construct local pseudoconvex domains in $\mathbb C^n$ with prescribed weak loci. 
The construction, relies on a   local existence theorem of  Kallel-Jallouli \cite{Ka03} for smooth plurisubharmonic solutions to the complex Monge--Amp\`ere equation, which we use to obtain the required potential function, together  with a technically involved verification of the D'Angelo type. 

\begin{theorem}\cite{Ka03}\label{Ka}
  Let $g$ be a smooth nonnegative function defined in a neighborhood of $z_0\in \mathbb C^n$. Suppose that  $g$ is not flat at $z_0$. Then there exist   a neighborhood $U$ of $z_0$ and  a real-valued smooth plurisubharmonic function $f$ on $U$ such that 
    $$\det H_f(z, \bar z) = g(z, \bar z)\ \ \text{on}\ \ U. $$
\end{theorem}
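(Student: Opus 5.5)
Since this result is quoted from \cite{Ka03} rather than proved in the paper, what follows is a plan for how I would reprove it, modeled on the approach of Hong--Zuily \cite{HZ87} for the real degenerate Monge--Amp\`ere equation. I would treat it as a perturbation problem solved by a Nash--Moser iteration, with non-flatness of $g$ used in two places: to build a good approximate solution, and to make the linearized operator hypoelliptic with a quantitative estimate.

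\emph{Step 1: normalize.} Translate so that $z_0=0$. Since $g\ge 0$ is not flat at $0$, its Taylor expansion has a lowest nonzero homogeneous part $P_{2k}$. This part has even degree $2k$ and satisfies $P_{2k}\ge 0$.

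\emph{Step 2: build an approximate solution.} I would look for a polynomial plurisubharmonic $F_0$ with
\[
\det H_{F_0}=g+O(|z|^{N})
\]
for a large $N$. A natural ansatz splits the degeneracy into one direction: $F_0=\sum_{j<n}|z_j|^2+h(z)$, where $h$ is a polynomial and $z_n$ is chosen so that $P_{2k}$ restricted to the $z_n$-line is nonzero. To leading order $\det H_{F_0}\approx h_{z_n\bar z_n}$ plus higher-order nonlinear terms. So $h$ can be found order by order by inverting $\partial_{z_n}\partial_{\bar z_n}$ on homogeneous polynomials, correcting along the way to keep $H_{F_0}\ge 0$ near $0$.

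\emph{Step 3: rescale.} Set $z=\varepsilon\zeta$ and renormalize $f$. The residual $g-\det H_{F_0}$ then becomes $O(\varepsilon^{M})$ in every $C^s$ norm on the unit ball, and the problem becomes one of solving $\det H_{F_0+u}=g$ for a small $u$.

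\emph{Step 4: the linearized estimate.} The linearization at $F$ is
\[
L_F u=\sum_{j,k}\operatorname{cof}(H_F)_{j\bar k}\,u_{z_j\bar z_k}.
\]
This operator is only degenerate elliptic, because $\det H_F$ vanishes where $g$ does. The crucial estimate is
\[
\|u\|_{H^{s}}\lesssim \|L_F u\|_{H^{s+r}}+\|u\|_{L^2}
\]
for some fixed loss $r$, uniform for $F$ near $F_0$, with a solution operator for $L_F$ on a small ball (Dirichlet data cut off). I would derive it as a sum-of-squares or H\"ormander-type estimate. The Laplacian in $z'$, together with the factor $P_{2k}$ appearing in the $z_n$-direction coefficient, degenerates only to finite order. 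Non-flatness is exactly what makes finitely many commutators span, giving a subelliptic estimate with finite loss of derivatives.

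\emph{Step 5: close the argument.} With the estimate of Step 4 and the smallness of the error from Step 3, run a Nash--Moser scheme with smoothing operators to obtain a smooth $u$ with $\det H_{F_0+u}=g$ on a neighborhood $U$. Plurisubharmonicity of $f=F_0+u$ persists because the $z'$-block of $H_{F_0}$ is the identity and $u$ is $C^2$-small. The determinant condition $\det H_f=g\ge 0$ then forces the remaining eigenvalue to be nonnegative.

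\emph{Main obstacle.} I expect the hard step to be the uniform degenerate estimate for $L_F$ in Step 4. This estimate must be stable under the perturbations produced along the iteration, where the coefficients depend on $u$ itself. I would first try to ensure that every iterate satisfies $\det H_F=g\cdot(1+\text{small})$ up to a controlled error, so that the degeneracy pattern is frozen to that of $g$. Hong--Zuily's real-variable technique would then transfer.
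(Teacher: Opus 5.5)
The paper does not prove this statement; it is quoted from \cite{Ka03} and used as a black box, so there is no internal argument to compare against. Your outline follows the Nash--Moser framework of Hong--Zuily, which is a reasonable choice, but there is a genuine gap at Step 4: as you set it up, that step would fail, and the remedy you propose does not repair it.

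\textbf{Orientation of the degeneracy.} Write $H_F=\begin{pmatrix}A&b\\ b^*&c\end{pmatrix}$ with $A$ the $z'$-block, and put $\xi=(-A^{-1}b,1)^T$. Jacobi's formula gives $L_Fu=\operatorname{tr}(\operatorname{adj}H_F\,H_u)$, where
\[
\operatorname{adj}H_F=\det A\,\xi\xi^*+\det H_F\begin{pmatrix}A^{-1}&0\\ 0&0\end{pmatrix}.
\]
So $L_F\approx\partial_{z_n}\partial_{\bar z_n}+g\,\Delta_{z'}$. It is nondegenerate in the (twisted) $z_n$-direction and carries the factor $\det H_F\approx g$ in the $z'$-directions. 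This is the reverse of what you wrote. Your choice of $z_n$ is still the right one, because the commutators must come from the nondegenerate $z_n$-fields acting on $g$. However, $\sqrt g$ need not be smooth, so the estimate has to be proved directly rather than by quoting H\"ormander's theorem.

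\textbf{The sign problem.} This is the serious issue. Along the iteration, $\det H_{F_j}=g+r_j$, where $r_j$ is small in $C^s$ but has no sign. In the paper's application, $g=|y|^{2m}+\psi(x)$ vanishes on $\{x+i0:x\in E\}$, which may accumulate at $z_0$. Wherever $g=0$ and $r_j<0$, $H_{F_j}$ has a negative eigenvalue, and then $\operatorname{adj}H_{F_j}=\det H_{F_j}\,H_{F_j}^{-1}$ has $n-1$ negative eigenvalues. At such points $L_{F_j}$ is not even degenerate elliptic. Neither a sum-of-squares estimate nor an energy estimate is available there, let alone one uniform in $j$.

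Your remedy, arranging $\det H_{F_j}=g(1+\text{small})$ up to a controlled error, cannot be enforced. The corrections in the scheme are additive, and nothing keeps $r_j$ divisible by $g$ or nonnegative on $\{g=0\}$. Any additive error destroys exactly the sign information the estimate needs. Your closing Schur-complement argument for plurisubharmonicity is correct, but it uses $\det H_f=g\ge 0$, which holds only for the limit, not for the iterates.

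\textbf{The missing idea.} Do not invert $L_F$ itself; invert a modified linearization obtained by replacing $\det H_F$ with $g$ in the formula above:
\[
\tilde L_Fu=\operatorname{tr}\Bigl(\Bigl(\det A\,\xi\xi^*+g\begin{pmatrix}A^{-1}&0\\ 0&0\end{pmatrix}\Bigr)H_u\Bigr).
\]
This operator has three properties you need.
\begin{enumerate}
\item It is degenerate elliptic for every $F$ with $A>0$, whatever the sign of the residual.
\item Its degeneracy is exactly that of $g$, so the finite-type estimate coming from the $z_n$-direction is stable along the iteration.
\item The difference $L_F-\tilde L_F=(\det H_F-g)\operatorname{tr}\bigl(\operatorname{diag}(A^{-1},0)H_u\bigr)$ is proportional to the current residual. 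Hence $\tilde L_F^{-1}$ is an approximate right inverse with quadratically small error, which is all a Nash--Moser scheme requires.
\end{enumerate}
This is the standard device in Nash--Moser treatments of degenerate Monge--Amp\`ere equations.

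\textbf{Existence.} You also need solvability, not just the a priori bound in Step 4. ``Dirichlet data cut off'' is not a solvability theory for a degenerate operator on a ball. The usual route is:
\begin{enumerate}
\item modify the coefficients so the operator is elliptic outside a small ball, or work on a torus;
\item prove the estimate uniformly for $\tilde L_F+\eta\Delta$ and for its adjoint;
\item let $\eta\to 0$ to obtain a tame inverse.
\end{enumerate}
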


  \begin{proposition}\label{lp}
Let \(E\subset \mathbb R^{n-1}\subset \mathbb C^{n-1}\) be a closed set. For any integer $m\ge 2$ and every point $x_0\in E$, there exist  a neighborhood $U$ of \(x_0+i0\) in $\mathbb C^{n-1}$ and a hypersurface $M$ defined in $U\times \mathbb C\subset \mathbb C^n$ such that the following holds.
\begin{enumerate}
    \item $M$ bounds a pseudoconvex side.
    \item The weakly pseudoconvex locus $W$ of $M$ is precisely 
    \[
W=\{(z,w)\in M: \operatorname{Re} z\in E, \ \operatorname{Im} z=0\}.
\]
\item \(M\) is of finite D'Angelo type, with type less than or equal to $2m+2$.
\end{enumerate}

%a smooth hypersurface and let \(\psi\in C^\infty(\mathbb R^{n-1})\) satisfy \(\psi\ge 0\), \(\psi^{-1}(0)=E\), and suppose that \(\psi\) is flat at every point of \(E\). Write \(z=x+iy\in \mathbb C^{n-1}\). Suppose that \(f \) is a smooth plurisubharmonic function in a neighborhood $U$ of \(z_0=(x_0,0)\in E\times\{0\}\)  satisfying
%\[\det H_f(z)=|y|^{2m}+\psi(x).\]
%Then the rigid hypersurface defined by 
%\[M=\{(z,w)\in U\times\mathbb C: \operatorname{Re}w+f(z) =0\}.\]
%bounds a  pseudoconvex side. 
\end{proposition}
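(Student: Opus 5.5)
The plan is to build $M$ as a rigid hypersurface $M=\{(z,w)\in U\times\mathbb C:\operatorname{Re} w+f(z,\bar z)=0\}$, where $f$ solves a complex Monge--Amp\`ere equation with a right-hand side that vanishes exactly on $E\times\{0\}$ and is not flat there. Write $z=x+iy\in\mathbb C^{n-1}$.

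First, apply Proposition \ref{Wf} on $\mathbb R^{n-1}$ to get $\psi\ge0$ with $\psi^{-1}(0)=E$, flat on $E$. Then set
\[
g(z,\bar z)=|y|^{2m}+\psi(x).
\]
This $g$ is smooth and nonnegative, and $g^{-1}(0)=\{\operatorname{Re} z\in E,\ \operatorname{Im} z=0\}$. It is not flat at $x_0+i0$, because its Taylor series there contains the nonzero homogeneous term $|y|^{2m}$, which the flat $\psi$ cannot cancel. So Theorem \ref{Ka} applies. It gives a neighborhood $U$ of $x_0+i0$ and a smooth plurisubharmonic $f$ on $U$ with $\det H_f=g$.

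Items 1 and 2 then follow directly from Lemma \ref{ri}. Since $f$ is plurisubharmonic, the side $\{\rho<0\}$ is pseudoconvex. The weak locus is $\{\det H_f=0\}=\{g=0\}$, which is exactly the stated set $W$. Off $W$ the type is $2$.

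Item 3, the type bound at $p\in W$, is the main obstacle, since $f$ is known only implicitly. I would argue as follows.

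\begin{itemize}
\item \emph{Normalize.} Subtract the pluriharmonic part of the Taylor polynomial of $f$ at $z_p$ and absorb it into $w$, as in Proposition \ref{r2}. This leaves $\tilde\rho=\operatorname{Re}\tilde w+\tilde f(\tilde z)$, where $\tilde f$ has no pluriharmonic terms up to any finite order.
\item \emph{Reduce to curves in $z$.} As in the $n=2$ argument, for curves with $\nu(\tilde w)<\nu(\tilde z)$ the ratio is $1$. Otherwise it suffices to show $\nu(\tilde f\circ\tilde z)\le (2m+2)\nu(\tilde z)$, since $\operatorname{Re}\tilde w$ is harmonic and cannot cancel the nonharmonic leading part of $\tilde f\circ\tilde z$.
\item \emph{Key claim.} Since $f$ is plurisubharmonic, $\tilde f\circ\tilde z$ is subharmonic, and a smooth subharmonic function whose harmonic part has been stripped has vanishing order controlled by that of its Laplacian:
\[
\nu(\tilde f\circ\tilde z)\le \nu\bigl(\Delta(\tilde f\circ\tilde z)\bigr)+2.
\]
Moreover $\Delta(\tilde f\circ\tilde z)=4\,\tilde z'^{*}H_f(\tilde z)\,\tilde z'$. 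So I need a lower bound on the Levi eigenvalue in the direction $\tilde z'$. This is where the determinant condition is used.
\item \emph{Eigenvalue estimate.} Let $\lambda_1$ be the smallest eigenvalue of $H_f$. We have $\lambda_1\ge \det H_f/\operatorname{tr}(H_f)^{n-2}\gtrsim g$ near $p$, and along the curve $g\ge|\operatorname{Im}\tilde z(\zeta)|^{2m}$. Then
\[
\tilde z'^{*}H_f\tilde z'\gtrsim |\operatorname{Im}\tilde z|^{2m}|\tilde z'|^2.
\]
If $\operatorname{Im}\tilde z$ has vanishing order $k=\nu(\tilde z)$, this has order at most $2mk+2(k-1)$. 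Adding $2$ gives $\nu(\tilde f\circ\tilde z)\le (2m+2)k$, which is the claimed bound.
\end{itemize}

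The delicate cases are curves with $\operatorname{Im}\tilde z(\zeta)$ vanishing to order higher than $\nu(\tilde z)$, or identically zero, i.e. curves essentially lying in $\mathbb R^{n-1}$. A nonconstant holomorphic curve cannot have $\operatorname{Im}\tilde z\equiv0$. So I would reparametrize and choose a component with $\tilde z_j=c\zeta^k+\dots$. I expect $\operatorname{Im}(c\zeta^k)$ to still vanish only to order $k$ as a polynomial in $\zeta,\bar\zeta$, and therefore $\nu(|\operatorname{Im}\tilde z|^{2m})=2mk$. Handling this carefully, and justifying the subharmonic order inequality in the presence of flat terms, is where I expect most of the work to lie.
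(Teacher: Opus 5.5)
Your proposal is correct. The construction and items 1--2 are exactly the paper's: Proposition \ref{Wf}, then Theorem \ref{Ka} with right-hand side $|y|^{2m}+\psi(x)$, then Lemma \ref{ri}. Your reduction of item 3 also matches: you bound the order $L$ of the nonharmonic part of $F(\zeta)=f(z(\zeta),\overline{z(\zeta)})$ by $(2m+2)\delta$. The paper splits $F$ into harmonic and nonharmonic parts along the curve rather than normalizing $f$ first; this is equivalent once you fix a finite order $N\ge 2m+2$.

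The routes differ in how $L$ is bounded. The paper uses your eigenvalue inequality only qualitatively, to show $L<\infty$, via $\lambda_{\min}\le F_{\zeta\bar\zeta}/\|z'\|^2$ and eigenvalues bounded by the trace. It then gets the bound $(2m+2)\delta$ from Hadamard's inequality $\det H_f\le\prod_j f_{z_j\bar z_j}$ together with the claim \eqref{nud} that $\nu(f_{z_j\bar z_j}\circ z)\ge L-2\delta$ for every $j$.

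You make the finiteness argument quantitative instead, in two steps:
\begin{enumerate}
\item The identity $L=\nu(\Delta F)+2$ is purely formal. $\partial_\zeta\partial_{\bar\zeta}$ kills pure monomials and maps $\zeta^p\bar\zeta^q$ with $p,q\ge1$ injectively to $pq\,\zeta^{p-1}\bar\zeta^{q-1}$. So flat terms and subharmonicity play no role in this step.
\item $\Delta F=4\langle H_f z',z'\rangle\ge 4C^{2-n}\bigl(|y|^{2m}+\psi(x)\bigr)|z'|^2$, where $C$ is a local bound for $\operatorname{tr}H_f$. Comparing nonnegative functions gives $\nu(\Delta F)\le 2m\delta+2\delta-2$.
\end{enumerate}

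What your route buys is robustness. You measure $H_f$ in the direction $z'$ that actually enters $F$. The paper's diagonal-entry comparison \eqref{nud} is fragile, because its justification tacitly assumes every component $z_j(\zeta)-z_{p,j}$ vanishes to order exactly $\delta$. For example, take the plurisubharmonic $f=|z_2|^2+|z_1|^{20}$ and the curve $z(\zeta)=(\zeta,0)$. Then $L=20$ and $\delta=1$, but $f_{z_2\bar z_2}\equiv1$, so \eqref{nud} fails for $j=2$.

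Finally, the ``delicate cases'' you anticipate do not occur. Since $z_p$ is real, $\operatorname{Im}\tilde z=y$. For $c\ne0$, $\operatorname{Im}(c\zeta^\delta)$ is a nonzero polynomial, so $\nu(|y(\zeta)|^{2m})=2m\delta$ for every curve with $\delta<\infty$. This is the same observation the paper makes in \eqref{nuh}.
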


\begin{proof}
Since $E$ is a closed set in $\mathbb R^{n-1}$,    the Whitney extension theorem Proposition \ref{Wf} yields a function  \(\psi\in C^\infty(\mathbb R^{n-1})\) such that   $$\psi\ge 0, \qquad \psi^{-1}(0)=E,$$
and  \(\psi\) is flat at every point of \(E\). Write \(z=x+iy\), with \(x,y\in\mathbb R^{n-1}\), and consider the  nonnegative smooth function 
$$ |y|^{2m}+\psi(x). $$ 
Given $x_0\in E$, since  $y^{2m}$ has finite order vanishing at $y=0$, the function $|y|^{2m}+\psi(x)$ is not flat at $z_0=x_0+i0\in \mathbb C^{n-1}$. Therefore, one can  apply Theorem \ref{Ka} by Kallel-Jallouli to obtain a smooth plurisubharmonic function $f$ on a neighborhood  $U$  of $z_0$ in $\mathbb C^{n-1}$  such that 
\begin{equation}\label{pnf}
    \det H_f(z, \bar z) = |y|^{2m}+\psi(x)  \ \ \text{on}\ \ U.
\end{equation}
In particular, the complex Hessian \(H_f\ge 0\). The additional term $|y|^{2m}$  is essential here: it not only  allows one to apply  the existence theorem of Kallel-Jallouli, but also provides the    finite type control needed along the weakly pseudoconvex locus,  as will be seen later.

Define the hypersurface by 
\[
M=\{(z,w)\in U\times\mathbb C: \rho = \operatorname{Re}w+f(z, \bar z) =0\}.
\]
 Since the defining function  is plurisubharmonic, $M$ bounds a  pseudoconvex side. 
\medskip

\noindent\emph{Weak locus.}
Both terms on the right-hand side of \eqref{pnf} are nonnegative. Thus for all $z\in U$, $$\det H_f(z, \bar z)=0 \qquad \iff\qquad y=0,\    x\in E.$$ Hence by Lemma  \ref{ri},  the stated description of \(W\) holds.

\medskip

\noindent\emph{D'Angelo type.}  Fix a point $p =(z_p, w_p)\in W$, where $z_p= x_p+i0$ with $x_p\in E $, and $w_p = -f(z_p, \bar z_p) +iv_p $.  Replacing $w$ by $w+w_p$ which also subtracts the constant $f(z_p,\bar z_p)$ from $f$, we may assume that $$ p=(z_p,0) =(x_p+i0, 0) \qquad \text{and} \qquad f(z_p,\bar z_p)=0. $$ 
Let \(\gamma(\zeta)=(z(\zeta),w(\zeta))\) be a nonconstant holomorphic curve with \(\gamma(0)=p \). Set
\[
\delta=\nu(z(\zeta)-z_p),
\qquad
\alpha=\nu(w(\zeta)).
\]
Then \begin{equation}\label{nug}
    \nu(\gamma)=\min\{\delta,\alpha\}.
\end{equation} 
The pullback of $\rho$ by $\gamma$ is \[
\rho(\gamma(\zeta))=\operatorname{Re}w(\zeta)+f(z(\zeta), \overline{z(\zeta)}).
\] 
If the order $\delta =\infty$, then $z\equiv z_p$ and  $\nu(\rho\circ\gamma) = \nu(\operatorname{Re} w(\zeta)) = \alpha$. Hence
$$\frac{\nu(\rho\circ\gamma)}{\nu(\gamma)} =1. $$
In what follows, we assume that $\delta<\infty$.

Decompose \( f(z(\zeta), \overline{z(\zeta)})\) into a sum of a harmonic term,   and a non-harmonic term. Denote by \(L \)  the vanishing order of the nonharmonic term of $f(z(\zeta), \overline{z(\zeta)})$. Indeed, for \(N\ge 1\), write the Taylor polynomial of \(f(z(\zeta), \overline{z(\zeta)})\) at \(0\), of total degree at most \(N\), as
\(
P_N({\zeta},\overline {\zeta})=\sum_{p+q\le N}a_{pq}{\zeta}^p\overline {\zeta}^q.
\)
Let \(Q_N\) denote the mixed nonharmonic part of \(P_N\):
\[
Q_N({\zeta},\overline {\zeta})
=
\sum_{\substack{p,q\ge 1\\ p+q\le N}}
a_{pq}{\zeta}^p\overline {\zeta}^q.
\]
Define
\[
L_N=
\begin{cases}
\min\{p+q:p,q\ge 1,\ a_{pq}\ne 0\}, & Q_N\not\equiv 0;\\
N+1, & Q_N\equiv 0.
\end{cases}
\]
Then   
\[L =\lim_{N\to\infty}L_N\in\mathbb N\cup\{\infty\}.\]   

We claim that \(L \) is finite. Indeed, since \(x_p\in E\) and \(\psi\) is flat at \(x_p\), the term \(\psi(x(\zeta))\) vanishes to infinite order at \(0\). By the definition of \(\delta\), at least one component of \(z(\zeta)-z_p\) has order \(\delta\). Since \(z(\zeta)\) is holomorphic, the function \(|y(\zeta)|^2\) has order \(2\delta\). Therefore
\begin{equation}\label{nuh}
    \nu(\det H_f(z(\zeta), \overline{z(\zeta)}))=   \nu( |y(\zeta)|^{2m} +\psi(x(\zeta)))=\nu(|y(\zeta)|^{2m}) = 2m\delta.
\end{equation}
Suppose, toward a contradiction, that \(L=\infty\), meaning   that all non-harmonic terms of $F(\zeta): =f(z(\zeta), \overline{z(\zeta)})$ vanish to infinite order.  In particular,  \(F_{{\zeta}\overline {\zeta}}\) is flat at \(0\).  By the chain rule,
\[
F_{{\zeta}\overline {\zeta}}(\zeta)
=
\sum_{j,k=1}^{n-1}
f_{z_j\overline z_k}(z(\zeta))z_j'(\zeta)\overline{z_k'(\zeta)}
=
\left\langle H_f(z(\zeta), \overline{z(\zeta)})z'(\zeta),z'(\zeta)\right\rangle.
\]
Let \(\lambda_{\min}(\zeta)\) be the smallest eigenvalue of \(H_f(z(\zeta), \overline{z(\zeta)})\). Since \(H_f(z(\zeta), \overline{z(\zeta)})\ge 0\), for \(\zeta\ne 0\) we have
\[
0\le \lambda_{\min}(\zeta)
\le
\frac{\left\langle H_f(z(\zeta), \overline{z(\zeta)})z'(\zeta),z'(\zeta)\right\rangle}{\|z'(\zeta)\|^2}
=
\frac{F_{{\zeta}\overline {\zeta}}(\zeta)}{\|z'(\zeta)\|^2}.
\]
The numerator is flat, while \(\|z'(\zeta)\|^{2}\) is of vanishing order $2(\delta-1) $. Thus    $$\lambda_{\min}(\zeta)= O(|\zeta|^k) \qquad \text{for all}\ \ k\ge 0.$$ On the other hand,  the remaining eigenvalues of \(H_f(z(\zeta), \overline{z(\zeta)})\) are locally bounded. Indeed, since the matrix \(H_f\) is positive semi-definite  by pseudoconvexity,   its eigenvalues are nonnegative and bounded above by the trace, which is continuous and therefore locally bounded. Consequently, \(\det H_f(z(\zeta), \overline{z(\zeta)})\) is flat at \(0\).  This is a contradiction to \eqref{nuh}. Hence the claim is proved.

Fix an integer $N>L+2$. Decompose the Taylor
expansion of $f(z(\zeta), \overline{z(\zeta)})$ up to order $N$ as
\begin{equation*}
f(z(\zeta), \overline{z(\zeta)})=h(\zeta)+g(\zeta)+O(|\zeta|^{N+1}),
\end{equation*}
where the polynomials $h(\zeta)$ and  $g(\zeta)$ denote  the corresponding harmonic part and  the non-harmonic part, respectively. In particular, since $f(z_p, \bar z_p)=0$, 
 \begin{equation}\label{gh}
     \nu(g(\zeta))\ge \nu(z(\zeta)-z_p)= \delta   \qquad\text{and}\qquad \nu(h(\zeta))\ge \nu(z(\zeta)-z_p)= \delta. 
 \end{equation}
 %, namely the sum of the mixed terms $t^p\overline t^q$ with $p,q\ge 1$. 
Since $h(\zeta)$ is harmonic, there exists a holomorphic polynomial $H(\zeta)$ with $H(0)=0$ such that
\begin{equation*}
h(\zeta)=\operatorname{Re}H(\zeta),
\end{equation*}
%We shall use the following elementary observation. Let $P_N(t,\overline t)$ be a polynomial in $t$ and $\overline t$ of total degree at most $N$. Decompose it uniquely as
%\begin{equation*}
%P_N(t,\overline t)=H_N(t,\overline t)+G_N(t,\overline t),
%\end{equation*}
%where $H_N$ consists only of the pure monomials, namely those with
%$p=0$ or $q=0$, and $G_N$ consists only of the mixed monomials
%$t^p\overline t^q$ with $p,q\ge 1$. If
%\begin{equation*}
%\beta=\nu (H_N), \qquad L=\nu( G_N),
%\end{equation*}
%then
%\begin{equation*}
%\nu (P_N)=\min\{\beta,L\}.
%\end{equation*}
%Indeed, pure monomials and mixed monomials are distinct monomials in the two independent variables $t$ and $\overline t$. Hence they span complementary subspaces of the space of polynomials of degree at most $N$, and therefore a pure term cannot cancel a mixed term.
and thus 
\begin{equation*}
\rho(\gamma(\zeta))
=
\operatorname{Re} (w(\zeta)+H(\zeta)) +g(\zeta)+O(|\zeta|^{N+1}). 
\end{equation*}
Note that by definition of $L$ and \eqref{gh},
\begin{equation}\label{Ho}
L=\nu (g(\zeta))\ge \delta \qquad\text{and}\qquad     \nu(H(\zeta)) =\nu(h(\zeta))\ge \delta.
\end{equation}
Since %pure monomials and mixed monomials are distinct monomials in the two independent variables ${\zeta}$ and $\overline {\zeta}$,  they span complementary subspaces of the space of polynomials of degree at most $N$, and therefore 
a pure term cannot cancel a mixed term, 
we obtain
\begin{equation*} 
\nu (\rho\circ \gamma)=\min\{\beta,L\},
\end{equation*}
where
\begin{equation*}
\beta  =
\nu\bigl( \operatorname{Re} (w(\zeta)+H(\zeta))\bigr) =
\nu\bigl( w(\zeta)+H(\zeta)\bigr).
\end{equation*}
Consequently, together with \eqref{nug} one has
\begin{equation}\label{nur}
\frac{\nu (\rho\circ\gamma)}
{\nu(\gamma)}
=
\frac{\min\{\beta,L\}}{\min\{\alpha, \delta\}}. 
\end{equation}
 
%This formula is important: in general one cannot replace $\beta$ by$\alpha$, because the term $\operatorname{Re}w(t)$ may partially or completely cancel the harmonic part $h(t)$.

%Since $w(t)$ isholomorphic, $\operatorname{Re}w(t)$ has the same order as $w(t)$, unless $w\equiv 0$. Thus
 We now estimate \eqref{nur} by splitting into two cases.  First suppose that $\delta> \alpha$. Then \begin{equation*}
\nu(\gamma)=\min\{\alpha,\delta\}=\alpha.
\end{equation*}
 %We next compare $\beta$ with the order of the curve. %he harmonic part of $f$ is the real part of a holomorphic function vanishing at $z_p$. 
Since $ w(\zeta)$ has order $\alpha$, whereas $H(\zeta)$ has order at
least $\delta$ by \eqref{Ho},  no cancellation at order $\alpha$ is possible for $ w(\zeta) +H(\zeta) $. 
Hence
\begin{equation*}
\beta=\alpha.
\end{equation*}
Consequently,
\begin{equation*}
\frac{\nu (\rho\circ\gamma)}
{\nu(\gamma)}
=
\frac{\min\{\alpha,L\}}{\alpha}
\le 1.
\end{equation*}

Now suppose that $\alpha\ge \delta$. Then \eqref{nur} is reduced to 
\begin{equation}\label{de}
\frac{\nu(\rho\circ\gamma)}
{\nu(\gamma)}
=
\frac{\min\{\beta,L\}}{\delta}
\le
\frac{L}{\delta}.
\end{equation}
 %Since the harmonic part of \(f(z(t))\) may cancel with \(\operatorname{Re}w(t)\), while the non-harmonic part cannot, 
%\[
%\nu(\rho(\gamma(t)))\le \min\{\alpha,L\},???
%\]
%where  \(L \) denotes the vanishing order of the nonharmonic term of $f(z(t))$.
%Combining this with \eqref{nug}, we obtain
%\begin{equation}\label{de}
%    \frac{\nu(\rho\circ\gamma)}{\nu(\gamma)}
%\le
%\frac{\min\{\alpha,L\}}{\min\{\alpha,\delta\}}.
%\end{equation}
%Since the corresponding $  \frac{\nu(\rho\circ\gamma)}{\nu(\gamma)}$ equals $1$ if $z(t)\equiv z_0$, we shall assume below that $\delta<\infty$. 
To    estimate the vanishing order $L$, the nonharmonic part   of $ f(z(\zeta), \overline{z(\zeta)}) $, we   compare it  with that  of the diagonal entries $f_{z_j\bar z_j}(z(\zeta), \overline{z(\zeta)})$ of \(H_f(z(\zeta), \overline{z(\zeta)})\). We first claim that   for every \(j=1, \ldots, n-1\),
\begin{equation}\label{nud}
\nu (f_{z_j\overline z_j}(z(\zeta), \overline{z(\zeta)}))\ge L-2\delta.
\end{equation}
Indeed, write a typical mixed monomial in the Taylor expansion of $f$ at
$z_p$ in the form
\begin{equation*}
(z-z_p)^\alpha(\overline z-\overline z_p)^\beta,
\qquad |\alpha|\ge 1,\quad |\beta|\ge 1.
\end{equation*}
Such monomials are precisely the terms which contribute to the
non-harmonic part. If this monomial contains both
$z_j-z_{0,j}$ and $\overline z_j-\overline z_{0,j}$, so that
$\alpha_j\ge 1$ and $\beta_j\ge 1$, applying
$\partial_{z_j}\partial_{\overline z_j}$ lowers the two exponents
$\alpha_j$ and $\beta_j$ by one. Hence the total bidegree is lowered by two. After restricting to the curve $z(\zeta)$, the order in $\zeta$ is therefore
lowered by at most $2\delta$. 
On the other hand, if the monomial does not contain both
$z_j-z_{p,j}$ and $\overline z_j-\overline z_{p,j}$, then
$\partial_{z_j}\partial_{\overline z_j}$ annihilates that monomial. The 
first nonzero term which survives after applying
$\partial_{z_j}\partial_{\overline z_j}$ must therefore come from either
another lowest-order mixed monomial or from a mixed monomial of higher
order. In either case its order along the curve is no smaller than
$L-2\delta$. The claim is confirmed.

On the other hand,  for a positive semi-definite Hermitian matrix, Hadamard's inequality  bounds  the determinant  from above by the product of its diagonal entries. We apply this  to the   matrix \(H_f(z(\zeta), \overline{z(\zeta)})\) to get 
\[
0\le \det H_f(z(\zeta), \overline{z(\zeta)})
\le
\prod_{j=1}^{n-1} f_{z_j\overline z_j}(z(\zeta), \overline{z(\zeta)}),\qquad |{\zeta}|<<1.
\]
Taking vanishing orders of the above and making use of  \eqref{nuh} further give 
\[
\nu
\left(
\prod_{j=1}^{n-1} f_{z_j\overline z_j}(z(\zeta), \overline{z(\zeta)})
\right)
\le 2m\delta.
\]
Then     there exists \(k\in\{1,\ldots,n-1\}\) such that
\[
\nu( f_{z_k\overline z_k}(z(\zeta), \overline{z(\zeta)}))\le 2m\delta.
\]
 Combining \eqref{nud} with the index \(k\) chosen above gives
\[
L-2\delta
\le
\nu( f_{z_k\overline z_k}(z(\zeta), \overline{z(\zeta)}))
\le
2m\delta.
\]
Thus
\[
L\le (2m+2)\delta.
\]
Finally, invoking  \eqref{de} one infers  
%If \(\alpha\ge\delta\), this ratio is bounded by \(L/\delta\le 2m+2\). If \(\alpha<\delta\), it is bounded by \(1\). Thus every nonconstant holomorphic curve through \(p\) satisfies
\[
\frac{\nu(\rho\circ\gamma)}{\nu(\gamma)}
\le 2m+2.
\]
Taking the supremum over all such curves gives \(\tau(p)\le 2m+2\).
\end{proof}

\section{Local convex models in higher dimensions}\label{nc}

To construct convex domains with desired weak loci, we have to compromise by lowering one real dimension on the prescribed closed set. Our tool for constructing the potential is the following   special case of Hong--Zuily's result for the real degenerate  Monge--Amp\`ere equation. 

\begin{theorem}\label{HZ}\cite{HZ87} 
    Let $\psi$ be a smooth nonnegative function defined in a neighborhood of   $x_0\in \mathbb R^n$. Suppose that  $\psi$ is  not flat at $x_0$. Then there exist  a (convex) neighborhood $U$ of $x_0$ and  a smooth convex function to 
    $$\det D^2 \Phi  = \psi \ \  \text{on}\ \ U. $$
\end{theorem}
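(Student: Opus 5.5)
Theorem \ref{HZ} is quoted from \cite{HZ87} and is used only as a black box later on. If one wants to reconstruct it, I would combine a Nash--Moser perturbation argument with an estimate for degenerate elliptic operators of finite type. The first step is to normalize $\psi$. Since $\psi\ge 0$ is not flat at $x_0$, after a translation and a rotation we may take $x_0=0$ and arrange that the restriction of $\psi$ to the $x_n$-axis vanishes to some finite order $k$. Nonnegativity then forces $k$ to be even, with $\partial_n^k\psi(0)>0$. The Malgrange preparation theorem then gives, near $0$,
\[
\psi(x',x_n)=c(x)\Bigl(x_n^k+\sum_{j<k}a_j(x')x_n^j\Bigr),\qquad c(0)>0,
\]
and the bracket is nonnegative because $\psi$ is. Next, rescale $x'\mapsto \varepsilon x'$ and $x_n\mapsto \varepsilon^{\theta}x_n$, with $\theta$ chosen so that the principal part $x_n^k$ dominates after rescaling. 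This turns the local problem into one on a fixed ball, with data $\varepsilon$-close (in $C^\infty$) to a model.

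The second step is to build an approximate convex solution. For the model right-hand side $c(0)x_n^k$ I would take
\[
\Phi_0(x)=\tfrac12|x'|^2+G(x_n),\qquad G''(x_n)=c(0)\,x_n^k,
\]
which is smooth and convex because $k$ is even. Using the Schur complement,
\[
\det D^2\Phi=\det(\Phi_{x'x'})\bigl(\Phi_{nn}-\Phi_{nx'}\Phi_{x'x'}^{-1}\Phi_{x'n}\bigr),
\]
convexity is preserved under small perturbations $\Phi=\Phi_0+u$, as long as $u_{x'x'}$ stays small and the Schur complement equals $\psi/\det(\Phi_{x'x'})\ge 0$. Solving $\det D^2\Phi=\psi$ therefore yields a convex $\Phi$ automatically. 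The linearization at $\Phi$ is $L_\Phi v=\operatorname{tr}(\operatorname{cof}(D^2\Phi)D^2v)$. At $\Phi_0$ this is $x_n^k\,\Delta_{x'}v+\partial_n^2 v$ up to constants. Because the coefficient has finite-order vanishing, this operator is hypoelliptic of Hörmander type with a subelliptic gain $\delta=\delta(k)>0$. The same holds for nearby coefficient matrices whose degenerate entry vanishes to order at most $k$, uniformly in the relevant parameters.

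The third step is a Nash--Moser iteration in the $\varepsilon$-scaled problem. One needs tame estimates
\[
\|v\|_{s+\delta}\le C_s\bigl(\|L_\Phi v\|_s+\|\Phi\|_{s+r}\|L_\Phi v\|_0\bigr)
\]
on a slightly smaller ball, with constants uniform for $\Phi$ in a $C^r$-neighborhood of $\Phi_0$. These would be proved by localization and Fefferman--Phong/Hörmander subelliptic estimates, together with a right-inverse construction, for instance adding a Dirichlet problem in a box and using the loss $\delta$.

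I expect this step to be the main obstacle: showing that finite-type degeneracy, and hence the subelliptic gain, persists uniformly along the iteration. The set where the degenerate coefficient vanishes moves with $\Phi$, and one must control it through the factorization above, in which $\det(\Phi_{x'x'})$ is nearly $1$ and the Schur complement is comparable to $\psi$. Once the tame estimates are in place, the standard Nash--Moser scheme converges for small $\varepsilon$ and produces a smooth convex $\Phi$ on a convex neighborhood $U$ of $x_0$ with $\det D^2\Phi=\psi$.
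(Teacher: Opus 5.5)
The paper gives no proof of this statement. It is imported from Hong--Zuily and used only as a black box in Proposition~\ref{lc}, as your first sentence says. Read as a proof, your reconstruction has a genuine gap exactly at the step you flag, and the mechanism you propose to close it does not work.

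A Nash--Moser scheme must invert the linearization at the approximate solutions $\Phi_j$, not at an exact solution. Write $A=\Phi_{x'x'}$, $\beta=A^{-1}\Phi_{x'n}$ and $F(\Phi)=\det D^2\Phi-\psi$. Then
\[
\operatorname{cof}(D^2\Phi)=\det A\begin{pmatrix}\sigma A^{-1}+\beta\beta^{T} & -\beta\\ -\beta^{T} & 1\end{pmatrix},\qquad \sigma=\frac{\psi+F(\Phi)}{\det A},
\]
so the degenerate part of $L_{\Phi_j}$ is $\bigl(\psi+F(\Phi_j)\bigr)\operatorname{tr}(A^{-1}D^2_{x'})$. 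Your assertion that the Schur complement is comparable to $\psi$ holds only when $F(\Phi_j)=0$. On the zero set of $\psi$ nothing prevents $F(\Phi_j)<0$, and there $L_{\Phi_j}$ is not even degenerate elliptic, so no subelliptic estimate is available.

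Shrinking the neighborhood of $\Phi_0$ does not help. For small $\kappa>0$, $\Phi_0-\tfrac{\kappa}{2}x_n^2$ is $C^\infty$-close to $\Phi_0$, yet its linearization is $(c(0)\,x_n^k-\kappa)\Delta_{x'}+\partial_n^2$, which is hyperbolic near $x_n=0$. So the standard Nash--Moser scheme, which needs tame right inverses of the exact linearization at every iterate, cannot be run as you describe.

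The missing idea is to invert a modified linearization with the residual removed,
\[
\tilde L_\Phi v=L_\Phi v-F(\Phi)\operatorname{tr}\bigl(A^{-1}D^2_{x'}v\bigr).
\]
Its coefficient matrix $\psi\operatorname{diag}(A^{-1},0)+\det A\,(-\beta,1)^{T}(-\beta,1)$ is positive semidefinite for every $\Phi$ near $\Phi_0$. Its degeneracy is governed by the fixed function $\psi$ through the finite-type condition $X^k\psi\neq 0$, with $X=\partial_n-\beta\cdot\nabla_{x'}$. This condition holds uniformly because $\partial_n^k\psi>0$ and $\beta$ is small, and it is what gives subelliptic estimates uniform in $\Phi$. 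Since $L_\Phi-\tilde L_\Phi$ carries the factor $F(\Phi)$, $\tilde L_\Phi^{-1}$ is an approximate right inverse in Zehnder's sense. You therefore need the version of the Nash--Moser theorem with approximate inverses, whose quadratic-error hypothesis is exactly this.

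A secondary point concerns your right inverse. A Dirichlet problem in a box need not give tame estimates up to the faces $\{x_j=\pm1\}$, $j<n$, because these faces are characteristic (for the model operator, along $\{x_n=0\}$). It is safer to cut off and periodize the rescaled data in $x'$, keeping it nonnegative with $\partial_n^k\psi>0$. Then impose boundary conditions only on $\{x_n=\pm1\}$, which are noncharacteristic since the coefficient of $\partial_n^2$ is $\det A\approx 1$.
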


 \begin{proposition}\label{lc}
Let \(E\subset \mathbb R^{n-2}\subset \mathbb C^{n-1}\) be a closed set.  For every integer $m\ge 2$ and every point $\hat x_0\in E$, there exist  a neighborhood $U$ of $(\hat x_0, 0)$ in $\mathbb R^{n-1}$ and a hypersurface $M$ defined in $(U+i\mathbb R^{n-1})\times \mathbb C\subset \mathbb C^n$, such that the following holds.
\begin{enumerate}
    \item $M$ bounds a convex side.
    \item The weakly pseudoconvex locus $W$ of $M$ is precisely 
    \[
W=\{(z,w)\in M: \operatorname{Re} z\in E\times \{0\},\  \operatorname{Im} z=0\}.
\]
\item \(M\) is of finite D'Angelo type, with type less than or equal to $2m $. 
\end{enumerate}
  \end{proposition}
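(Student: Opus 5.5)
The plan is to adapt the rigid construction of Proposition \ref{r2}, replacing the one-variable convex potential by a convex solution of the real Monge--Amp\`ere equation from Theorem \ref{HZ}, and then running the type argument of Proposition \ref{r2} rather than the more involved Hadamard-inequality argument of Proposition \ref{lp}.

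\emph{Construction of $M$.} Write $z=x+iy$ with $x=(\hat x,t)\in\mathbb R^{n-2}\times\mathbb R$ and $y\in\mathbb R^{n-1}$. By Proposition \ref{Wf} there is $\psi\ge 0$ on $\mathbb R^{n-2}$ with $\psi^{-1}(0)=E$ and $\psi$ flat on $E$. Consider the nonnegative function
\[
\psi(\hat x)+t^2 \quad\text{on } \mathbb R^{n-1}.
\]
It vanishes exactly on $E\times\{0\}$, and it is not flat at $(\hat x_0,0)$ because of the $t^2$ term. Theorem \ref{HZ} then gives a convex neighborhood $U$ of $(\hat x_0,0)$ and a smooth convex $\Phi$ on $U$ with
\[
\det D^2\Phi=\psi(\hat x)+t^2 .
\]
Set
\[
f(z,\bar z)=\Phi(x)+|y|^{2m},\qquad M=\{\operatorname{Re} w+f=0\}\ \text{ over } (U+i\mathbb R^{n-1})\times\mathbb C .
\]
Since the $x$- and $y$-dependences are decoupled, the real Hessian of $\rho=\operatorname{Re} w+f$ is block diagonal, with blocks $D^2\Phi\ge 0$ and $D^2(|y|^{2m})\ge 0$. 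Hence $\rho$ is convex on a convex set, and $M$ bounds a convex side.

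\emph{Weak locus.} Because there are no mixed $x$--$y$ terms, the complex Hessian is
\[
H_f=\tfrac14\bigl(D^2\Phi(x)+D^2_y|y|^{2m}\bigr).
\]
The $y$-part can be computed explicitly:
\[
D^2_y|y|^{2m}=2m|y|^{2m-2}I+2m(2m-2)|y|^{2m-4}\,yy^T ,
\]
which is positive definite whenever $y\ne0$. So for $y\neq 0$ we get $H_f>0$. At $y=0$ we have $H_f=\frac14 D^2\Phi$, whose determinant $4^{-(n-1)}(\psi(\hat x)+t^2)$ vanishes iff $\hat x\in E$ and $t=0$. Lemma \ref{ri} then yields the stated $W$.

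\emph{D'Angelo type.} Fix $p\in W$ lying over $x_p=(\hat x_p,0)$. As in Proposition \ref{r2}, absorb the affine part of $\Phi$ at $x_p$ into a new coordinate:
\[
\tilde w=w+\Phi(x_p)+\nabla\Phi(x_p)\cdot(z-z_p).
\]
This is holomorphic because $\nabla\Phi(x_p)$ is real. The defining function becomes
\[
\operatorname{Re}\tilde w+R(x)+|y|^{2m},
\]
where, by convexity of $\Phi$, $R\ge0$, $R(x_p)=0$ and $\nabla R(x_p)=0$.

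For a nonconstant holomorphic curve $\gamma=(z(\zeta),\tilde w(\zeta))$ through $p$, let $\delta=\nu(z-z_p)$ and $\alpha=\nu(\tilde w)$.
\begin{enumerate}
\item If $\alpha<\delta$ (including $\delta=\infty$), the harmonic term $\operatorname{Re}\tilde w$ dominates and the ratio $\nu(\rho\circ\gamma)/\nu(\gamma)$ equals $1$.
\item If $\alpha\ge\delta$, note first that $\nu(\operatorname{Im} g)=\nu(g)$ for a holomorphic $g$ vanishing at $0$, so $\nu(|y(\zeta)|^{2m})=2m\delta$. The function $G=R(x(\zeta))+|y(\zeta)|^{2m}$ is a sum of nonnegative functions, so its order is at most $2m\delta$. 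Its leading homogeneous term is nonnegative and nonzero, and a nonzero homogeneous polynomial of positive degree that is nonnegative cannot be harmonic, since planar harmonic homogeneous polynomials have the form $\operatorname{Re}(c\zeta^k)$ and change sign. Therefore $\operatorname{Re}\tilde w$ cannot cancel it, which gives $\nu(\rho\circ\gamma)\le 2m\delta$ and a ratio of at most $2m$.
\end{enumerate}

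The main obstacle I expect is precisely this no-cancellation step. In Proposition \ref{lp} one needed Hadamard's inequality because $f$ was only known implicitly through its determinant. Here the explicit separated term $|y|^{2m}$, combined with the nonnegativity of $R$ coming from convexity, should make a direct leading-term argument suffice; one must only check carefully that the lowest-order nonzero jet of $G$ is indeed nonnegative and nonzero.
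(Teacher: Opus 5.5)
Your proposal is correct and follows the paper's proof essentially step for step. The shared steps are: the Whitney-flat $\psi$, a Hong--Zuily convex potential $\Phi$, the decoupled potential $f=\Phi(x)+|y|^{2m}$ with its block Hessian for the weak locus, and the observation that the leading jet of the nonnegative function $R(x(\zeta))+|y(\zeta)|^{2m}$ is non-harmonic of degree at most $2m\delta$, so $\operatorname{Re}\tilde w$ cannot cancel it. The differences are cosmetic: you prescribe $\det D^2\Phi=\psi(\hat x)+t^2$ instead of $|x_{n-1}|^{2m}+\psi(\hat x)$, which works equally well since only the zero set and non-flatness matter, and your $\tilde w$ should also subtract $iv_p$ so that $p$ is sent to the origin, which leaves $\operatorname{Re}\tilde w$ unchanged.
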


\begin{proof}
    As in the previous sections, we apply   the Whitney extension theorem Proposition \ref{Wf} to obtain      a smooth nonnegative function $\psi$ characterizing   the set $E$, such that \eqref{ei} holds
    and $\psi$ is flat at each point of $E$. For each point $\hat x_0\in E\subset  \mathbb R^{n-2}$, we  invoke  the local existence theorem of  Hong--Zuily, Theorem \ref{HZ},  to     obtain  a   smooth convex potential $\Phi $ and a convex neighborhood of $U$ of $ (\hat x_0, 0) $ in $\mathbb R^{n-1}$ such that 
\begin{equation}\label{phic}
\det D^2 \Phi(x) = |x_{n-1}|^{2m} + \psi(\hat x)\ \ \text{on}\ \ U,
\end{equation}
where  $x=(\hat x, x_{n-1})\in \mathbb R^{n-2}\times \mathbb R$. Set  
\begin{equation*}
f(z, \bar z):=|y|^{2m}+\Phi(x),
\end{equation*}
where   $y\in\mathbb R^{n-1}$ and $z=x+iy\in \mathbb C^{n-1}$. 
Then $f$ is convex on $U+i\mathbb R^{n-1}\subset \mathbb C^{n-1}$. Consequently, the rigid hypersurface   
\[
M=\{(z,w)\in (U+i\mathbb R^{n-1})\times\mathbb C: \operatorname{Re}w+f(z, \bar z) =0\}
\]
bounds the  convex side
$$ \Omega=\{(z,w)\in (U+i\mathbb R^{n-1})\times\mathbb C: \operatorname{Re}w+f(z,\bar z)<0\}. $$
%\medskip

\noindent\emph{Weak locus.}   Since \(f\) contains no mixed \(x,y\)-terms, its complex Hessian is given by 
\begin{equation*}
H_f(z, \bar z) =
\frac{1}{4}
\left(
D_x^2\Phi(x)+D_y^2|y|^{2m}
\right).
\end{equation*} In view of Lemma  \ref{ri}, it suffices to determine the zero set of \(\det H_f\). Equivalently, we shall prove that for $(x, y) \in U\times \mathbb R^{n-1}$,
\begin{equation}\label{eq}
\det \left(
D_x^2\Phi(x)+D_y^2|y|^{2m}
\right)=0
\qquad\Longleftrightarrow\qquad
y=0,\quad x_{n-1}=0,\quad \psi(\hat x)=0.
\end{equation}
The implication from right to left is straightforward when $m\ge 2$ from the construction of $f$. Indeed, if \(y=0\), then \(D_y^2|y|^{2m}=0\), and hence by \eqref{phic}
\begin{equation*} \det \left( D_x^2\Phi(x)+D_y^2|y|^{2m} \right) = \det D_x^2\Phi(x)=|x_{n-1}|^{2m}+\psi(\hat x), \end{equation*}
which vanishes when $x_{n-1}=0 $ and $ \psi(\hat x)=0 $.

Conversely,  suppose that the left-hand side of \eqref{eq} vanishes.
%Moreover,
%\begin{equation*}
%D_y^2|y|^{2m} =2m|y|^{2m-2}I + 4m(m-1)|y|^{2m-4}yy^T.
%\end{equation*}
% Hence
%\begin{equation*}
%\det H_F(z) = 4^{-n} \det \left(D^2\Phi(x)+2m|y|^{2m-2}I+4m(m-1)|y|^{2m-4}yy^T\right).
%\end{equation*}
Note that   \(D_x^2\Phi(x)\ge 0\) and 
\(D_y^2|y|^{2m}\) is positive definite except at \(y= 0\). 
 Thus, if $\det \left(
D_x^2\Phi(x)+D_y^2|y|^{2m}
\right)=0$, then $ y= 0,$ where   $ D_y^2|y|^{2m} =0 $. By \eqref{phic}
$$  
\det
\left(
D_x^2\Phi(x)\right)=
 |x_{n-1}|^{2m}+\psi(\hat x)=0.$$
But this equality can happen only when $x_{n-1} = \psi(\hat x)=0.$ We thus have the desired weak locus.

\medskip

\noindent\emph{D'Angelo type.}
%Again, we shall check the D'Angelo type by the complex line contact type in light of the  classical theorem of Boas-Straube \cite{BS}. 
To compute the type at a   point $p =\left( (\hat x_p, 0)+i0, w_p\right) \in W$, where $\hat x_p\in E$ and $w_p = -\Phi(\hat x_p, 0) +iv_p$,  we   perform a global biholomorphic coordinate shift $(z, w) \to ({\tilde z}, {\tilde w})$, which sends  $p$ to the origin and removes the linear components in the potential:
\begin{align*}
{\tilde z} &= z - (\hat x_p, 0);\\
{\tilde w} &= w - w_p + \sum_{j=1}^{n-1} \frac{\partial \Phi}{\partial x_j}(\hat x_p, 0) \tilde z_j.
\end{align*}
In these coordinates, the normalized local defining function takes the form  
\begin{equation*}
\tilde \rho ({\tilde z}, {\tilde w}) = \operatorname{Re}\,{\tilde w} + \tilde{\Phi}({\tilde x}) + |{\tilde y}|^{2m},
\end{equation*}
where  \begin{equation*} 
    \tilde{\Phi}({\tilde x}) = \Phi({\tilde x} + (\hat x_p, 0)) - \Phi(\hat x_p, 0) - \nabla\Phi(\hat x_p, 0) \cdot {\tilde x}. 
\end{equation*} 
Thus   $\tilde \Phi(0) = \tilde \Phi'(0) = 0$. 
Since $\Phi$ is  convex, it lies above its supporting hyperplane at $(\hat x_p,0)$. Hence  \begin{equation}\label{wp}
    \tilde{\Phi}({\tilde x})   \ge 0 
\end{equation} 
near the origin. 
 %It suffices to examine the complex line contact type at the origin. 

Let  $\gamma(\zeta)=({\tilde z}(\zeta),{\tilde w}(\zeta)) $ be a nonconstant holomorphic curve with \(\gamma(0)=0\).  Pulling back the normalized local defining function along this curve gives 
\begin{equation*}
({\tilde \rho}  \circ \gamma)(\zeta) =\operatorname{Re} {\tilde w}(\zeta) +  \tilde{\Phi}(\operatorname{Re} {\tilde z}(\zeta))  + |\operatorname{Im} {\tilde z}(\zeta) |^{2m}.
\end{equation*}
First consider  $\nu({\tilde w}(\zeta))< \nu({\tilde z}(\zeta))$. Since $\tilde \Phi$ vanishes to second order at the origin and $m\ge 2$, the function $\operatorname{Re} {\tilde w}(\zeta)   $ becomes the leading term of $({\tilde \rho}  \circ \gamma)(\zeta) $.  Hence $\nu({\tilde \rho}\circ r) = \nu({\tilde w}(\zeta))$, and 
$$ \frac{\nu({\tilde \rho}\circ\gamma)} {\nu(\gamma)} = 1. $$
We now  assume  $\nu({\tilde w}(\zeta))\ge  \nu({\tilde z}(\zeta))$. Let \(P_d\) be the first nonzero homogeneous term in the Taylor expansion of \(P(\zeta): = \tilde{\Phi}(\operatorname{Re}({\tilde z}(\zeta))) + |\operatorname{Im}({\tilde z}(\zeta))|^{2m}  \) at \(0\). Since $\tilde \Phi\ge 0$ by \eqref{wp}, the function $P $ is nonnegative. Moreover,  the term $|\operatorname{Im}\tilde z(\zeta)|^{2m}$   has vanishing order $2m\nu({\tilde z}(\zeta))$. Hence     $P_d$ is also  nonnegative, with degree  $$d\le 2m\nu({\tilde z}(\zeta)).$$  
Furthermore, since  $P_d(0)=0$, the nonnegative function $P_d$ cannot be harmonic by the maximum principle. On the other hand,   \(\operatorname{Re}{\tilde w}(\zeta)\) is harmonic. Thus    it cannot cancel    the nonharmonic polynomial $P_d$. Consequently, 
\begin{equation*} \nu\bigl(   {\tilde \rho}  \circ \gamma \bigr)\le d\le 2m\nu({\tilde z}(\zeta)), \end{equation*}
and 
\begin{equation*} \frac{\nu({\tilde \rho}\circ\gamma)} {\nu(\gamma)}\le \frac{ 2m\nu({\tilde z}(\zeta))}{  \nu({\tilde z}(\zeta))} =2m. \end{equation*}
This completes the   proof.
\end{proof}

\begin{comment}

In this coordinate framework, the complex tangent space is precisely $\{w^* = 0\}$. We parameterize an arbitrary complex line passing through the origin along a direction vector $v = (v_1, \dots, v_{n-1}) \in \mathbb{C}^{n-1} \setminus \{0\}$ using the complex parameter $\zeta = s + it \in \mathbb{C}$:
\begin{equation*}
\gamma(\zeta) = (\zeta v, 0).
\end{equation*}
Pulling back the normalized local defining function along this line gives:
\begin{equation*}
(\rho  \circ \gamma)(\zeta) = \tilde{\Phi}(\operatorname{Re}(\zeta v)) + |\operatorname{Im}(\zeta v)|^{2m}.
\end{equation*}
Because the normalized potential $\tilde \Phi$ satisfies \eqref{wp}, its Taylor series contribution can only add nonnegative subharmonic terms, and  thus can never cancel the strictly positive transverse penalty term $|\operatorname{Im}(\zeta v)|^{2m}$. 
Therefore, the lowest-degree non-vanishing term in the real Taylor expansion of the pullback is bounded exactly by the homogeneous degree $2m$ of  $|\operatorname{Im}(\zeta v)|^{2m}$. This guarantees that the order of contact satisfies 
\begin{equation*}
\tau(p) = \min\{\nu(\tilde{\Phi}(\operatorname{Re}(\zeta v))), \nu(|\operatorname{Im}(\zeta v)|^{2m})\}\le 2m. 
\end{equation*}
%If $E$ is a smooth submanifold, the directional derivatives of $\tilde{\Phi}$ along the tangent space of $E$ vanish up to order $2m$, yielding an exact contact order of $\tau(p) = 2m$.???
\end{comment}

\section{The Regularized Maximum Function}\label{rm}
To construct desired smoothly bounded pseudoconvex domains, we shall   truncate the unbounded    domains or extend the local domains obtained in the previous sections  using ellipsoidal  barriers. This requires  patching local defining functions with global bounding structures and  smoothing  the resulting corners. To carry out this procedure  
 without introducing negative eigenvalues that could destroy pseudoconvexity, we shall use the following smooth convex regularized maximum function.

\begin{lemma}\label{mm} 
For any $\epsilon > 0$, there exists a smooth function $M_\epsilon: \mathbb{R}^2 \to \mathbb{R}$ such that the following hold.
\begin{enumerate}
    \item   {Exactness:} $$M_\epsilon(s, t) = \max\{s, t\} \qquad \text{if}\qquad |s - t| \ge \epsilon.$$
    \item  {Lower and upper bounds:} $$  \max\{s, t\} \le M_\epsilon(s, t)\le  \frac{\epsilon}{2} +  \max\{s, t\}\qquad \text{for}\qquad (s, t)\in\mathbb R^2.$$  

\item {Zero-set: } If $M_\epsilon(s,t)=0 $ and $ |s-t|\le \epsilon$, then
$$  -\frac{3\epsilon}{2} \le s \le 0\qquad \text{and}\qquad  -\frac{3\epsilon}{2} \le t \le 0.$$
    
    \item  {Monotonicity:} $$\frac{\partial M_\epsilon}{\partial s} (s, t)\ge 0, \  \  \frac{\partial M_\epsilon}{\partial t}(s, t) \ge 0,\ \   \frac{\partial M_\epsilon}{\partial s}(s, t) +\frac{\partial M_\epsilon}{\partial t}(s, t)  =1 \qquad \text{for}\qquad (s, t)\in \mathbb R^2,$$ and  $$ \frac{\partial M_\epsilon}{\partial s}(s, t) > 0, \ \ \frac{\partial M_\epsilon}{\partial t}(s, t) >0  \qquad \text{if}\qquad |s - t| < \epsilon.$$ 
    
    \item   {Convexity:} The real Hessian matrix $$D^2 M_\epsilon(s, t)  = \phi_\epsilon(s - t) \cdot \begin{pmatrix} 1 & -1 \\ -1 & 1 \end{pmatrix}\qquad \text{for}\qquad (s, t)\in\mathbb R^2$$ for some nonnegative smooth function $\phi_\epsilon: \mathbb R\rightarrow \mathbb R$ with support on $[-\epsilon, \epsilon]$. In particular, the matrix $D^2 M_\epsilon$ is positive semi-definite on $\mathbb R^2$.
\end{enumerate}
\end{lemma}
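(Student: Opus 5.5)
The plan is to build $M_\epsilon$ from a mollified absolute value, using the identity $\max\{s,t\}=\frac{s+t}{2}+\frac{|s-t|}{2}$. Fix a smooth, even, nonnegative bump $\chi$ supported in $[-1,1]$ with $\int\chi=1$. Set $\chi_\epsilon(r)=\frac{1}{\epsilon}\chi(r/\epsilon)$ and define
\[
A_\epsilon(r)=\bigl(|\cdot|*\chi_{\epsilon}\bigr)(r)=\int_{\mathbb R}|r-u|\,\chi_\epsilon(u)\,du ,
\qquad
M_\epsilon(s,t)=\frac{s+t}{2}+\frac{1}{2}A_\epsilon(s-t).
\]
We then verify the five properties of Lemma \ref{mm} in order from explicit facts about $A_\epsilon$. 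First, $A_\epsilon$ is smooth and even. Since $|\cdot|$ is affine on each side of $0$ and $\chi_\epsilon$ is symmetric with unit mass, $A_\epsilon(r)=|r|$ for $|r|\ge\epsilon$, which gives exactness. Next, $A_\epsilon''=2\chi_\epsilon\ge0$, so $A_\epsilon$ is convex. By Jensen's inequality and symmetry, $A_\epsilon(r)\ge |r|$. Also $A_\epsilon(r)\le |r|+\int|u|\chi_\epsilon\le |r|+\epsilon$. Halving these bounds yields the lower and upper bounds in item 2.

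For monotonicity, compute $\partial_s M_\epsilon=\frac12(1+A_\epsilon'(s-t))$ and $\partial_t M_\epsilon=\frac12(1-A_\epsilon'(s-t))$. Their sum is $1$ automatically. Next, $A_\epsilon'(r)=\int\operatorname{sgn}(r-u)\chi_\epsilon(u)\,du\in[-1,1]$, which gives nonnegativity. For strict positivity when $|r|<\epsilon$, we need $|A_\epsilon'(r)|<1$ there. Since $A_\epsilon'(r)=2\int_{-\infty}^{r}\chi_\epsilon-1$, this requires $0<\int_{-\infty}^r\chi_\epsilon<1$ for $|r|<\epsilon$. That holds provided $\chi>0$ on the open interval $(-1,1)$, so we choose $\chi$ with this property. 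This is the main point where the choice of bump matters.

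For convexity, the chain rule gives
\[
D^2M_\epsilon(s,t)=\tfrac12 A_\epsilon''(s-t)\begin{pmatrix}1&-1\\-1&1\end{pmatrix},
\]
so $\phi_\epsilon=\chi_\epsilon$, which is supported in $[-\epsilon,\epsilon]$ and nonnegative. The matrix is positive semi-definite.

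Finally, for the zero set, suppose $M_\epsilon(s,t)=0$ with $|s-t|\le\epsilon$. Item 2 gives $\max\{s,t\}\le 0\le \frac{\epsilon}{2}+\max\{s,t\}$, so $\max\{s,t\}\in[-\frac{\epsilon}{2},0]$. The minimum is then at least $\max\{s,t\}-\epsilon\ge -\frac{3\epsilon}{2}$, so both $s$ and $t$ lie in $[-\frac{3\epsilon}{2},0]$. No step is truly hard. The only care needed is the strict positivity in item 4, which fixes the requirement that $\chi$ be positive on $(-1,1)$.
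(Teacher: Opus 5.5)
Your proposal is correct and follows the paper's proof essentially line by line. The paper also sets $M_\epsilon(s,t)=\frac{s+t}{2}+\frac12(|\cdot|*\phi_\epsilon)(s-t)$ with an even, nonnegative, unit-mass mollifier supported in $[-\epsilon,\epsilon]$, and verifies the five properties in the same way; your explicit requirement that the bump be positive on the open interval is a useful precision, since the paper's strict-monotonicity step (that $\operatorname{sgn}(x-y)$ is not constant on the support of $\phi_\epsilon$) tacitly relies on it, which holds for the standard mollifier but not for an arbitrary one with $\operatorname{supp}\phi_\epsilon\subset[-\epsilon,\epsilon]$.
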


\begin{proof}
We begin with the classical algebraic identity for the  maximum function:
\begin{equation*}
\max\{s, t\} = \frac{s + t}{2} + \frac{|s - t|}{2}.
\end{equation*}
Let $\phi_\epsilon \in C_c^\infty(\mathbb{R})$ be a standard symmetric mollifier such that $\phi_\epsilon(y) \ge 0$, $\text{supp}(\phi_\epsilon) \subset [-\epsilon, \epsilon]$, $\phi_\epsilon(-y) = \phi_\epsilon(y)$, and \begin{equation}\label{mo}
    \int_{-\epsilon}^\epsilon \phi_\epsilon(y) \, dy = 1.
\end{equation} We define the regularized absolute value function $\chi_\epsilon: \mathbb{R} \to \mathbb{R}$ via convolution  
\begin{equation*} 
\chi_\epsilon(x) = (|x| * \phi_\epsilon)(x) = \int_{-\epsilon}^\epsilon |x - y| \phi_\epsilon(y) \, dy.
\end{equation*}
The regularized maximum function is then defined as 
\begin{equation}\label{md}
M_\epsilon(s, t) : = \frac{s + t}{2} + \frac{1}{2} \chi_\epsilon(s - t).
\end{equation}
Clearly, the function \(M_\epsilon\in C^\infty(\mathbb R^2)\).
\medskip

\noindent\emph{Exactness.} By the definition of $M_\epsilon$, it suffices to show that $\chi_\epsilon (x) = |x|$ outside $(-\epsilon, \epsilon)$. For any $x   \ge \epsilon$ and $y \in [-\epsilon, \epsilon]$ within the support of $\phi_\epsilon$, we have $x - y \ge 0$. Thus 
\begin{equation}\label{123}
\chi_\epsilon(x) = \int_{-\epsilon}^\epsilon (x - y) \phi_\epsilon(y) \, dy = x \int_{-\epsilon}^\epsilon \phi_\epsilon(y) \, dy - \int_{-\epsilon}^\epsilon y \phi_\epsilon(y) \, dy = x=|x|.
\end{equation}
Here we have used the evenness of $\phi_\epsilon$ and the identity \eqref{mo}. 
%Since $\phi_\epsilon$ is an even function, the product $y \phi_\epsilon(y)$ is odd, making the second integral vanish. On the other hand, making use of \eqref{mo} we obtain 
%$$\chi_\epsilon(x) = x = |x|.$$ 
The case when  $x   \le -\epsilon$ is proved similarly by symmetry.  
\medskip

\noindent\emph{Lower and upper  bounds.} 
Since \(\phi_\epsilon(y)dy\) is a probability measure, one has 
\begin{equation*}
\chi_\epsilon(x)
=
\int_{-\epsilon}^{\epsilon}|x-y|\phi_\epsilon(y)\,dy
\ge
\left| \int_{-\epsilon}^{\epsilon}(x-y) \phi_\epsilon(y)\,dy \right|
=
|x|,
\end{equation*}
where the last equality  follows from the same computation as in \eqref{123}. 
 Consequently, by the definition \eqref{md} of $M_\epsilon$,
\begin{equation*}
M_\epsilon(s,t)\ge \frac{s + t}{2} + \frac{1}{2}|s-t| =  \max\{s,t\}.
\end{equation*}
The upper bound follows from the fact that   
$$  \chi_\epsilon(x) \le \int_{-\epsilon}^\epsilon |x|  \phi_\epsilon(y) \, dy + \int_{-\epsilon}^\epsilon |y|  \phi_\epsilon(y) \, dy \le |x| + \epsilon,  $$ and the definition \eqref{md}.

\medskip
\noindent\emph{Zero-set.} This is a direct consequence of the lower and upper bounds above. Indeed, if $M_\epsilon(s,t)=0 $,   then  the   bounds of $M_\epsilon $  imply that 
$$ 0\ge \max\{s,  t \} \ge -\frac{\epsilon}{2}. $$
Suppose first that $\max\{s, t \}  = s $. Then 
$$ 0\ge s \ge -\frac{\epsilon}{2}.$$
On the other hand, by assumption $ |s-t|\le \epsilon$, we have $$s-\epsilon \le t\le s.$$  It then follows that 
$$ 0\ge s\ge t \ge s-\epsilon \ge -\frac{3\epsilon}{2}.$$
The case when   $\max\{s, t \}  =t $ is proved in the same way.

\medskip
\noindent\emph{Monotonicity.}  Differentiating $M_\epsilon(s, t)$ with respect to $s$ and $t$ leads to
\begin{equation} \label{dm}
\frac{\partial M_\epsilon}{\partial s} = \frac{1}{2} + \frac{1}{2}\chi_\epsilon'(s - t), \quad \frac{\partial M_\epsilon}{\partial t} = \frac{1}{2} - \frac{1}{2}\chi_\epsilon'(s - t).
\end{equation}
Thus, we only need to verify that 
\begin{equation}\label{wb}
    -1 \le \chi_\epsilon'(x) \le 1,\ \ \ x\in \mathbb R.
\end{equation}
and 
\begin{equation}\label{lb}
    -1 < \chi_\epsilon'(x) < 1, \ \ \ |x|<\epsilon.
\end{equation}
In fact, differentiating the convolution expression for $\chi_\epsilon(x)$ yields 
\begin{equation*}
\chi_\epsilon'(x) = \frac{d}{dx} \int_{-\epsilon}^\epsilon |x - y| \phi_\epsilon(y) \, dy = \int_{-\epsilon}^\epsilon \text{sgn}(x - y) \phi_\epsilon(y) \, dy.
\end{equation*}
Since $-1 \le \text{sgn}(x - y) \le 1$ and $\phi_\epsilon(y) \ge 0$, \eqref{wb} then follows from  \eqref{mo}. Moreover, \eqref{lb} holds for  $|x|<\epsilon$ because   $\text{sgn}(x - y)$ is neither identically $1$ nor identically $-1$ on the support of $\phi_\epsilon$.  
\medskip

\noindent\emph{Convexity.} 
Recalling  that the distributional derivative of $\text{sgn}(x)$ is $2\delta_0(x)$, where $\delta_0$ denotes the Dirac delta function at the origin, we obtain 
\begin{equation*}
\chi_\epsilon''(x) = \frac{d}{dx}(\text{sgn} * \phi_\epsilon)(x) = (2\delta_0 * \phi_\epsilon)(x) = 2\phi_\epsilon(x),\ \ \ x\in \mathbb R.
\end{equation*}
Since $\phi_\epsilon(x) \ge 0$, we have $\chi_\epsilon''(x) \ge 0$. Applying the chain rule to $M_\epsilon(s, t)$, the real Hessian matrix $  D^2 M_\epsilon $ takes the explicit form 
\begin{equation*}
 D^2 M_\epsilon(s,t) = \begin{pmatrix} \frac{\partial^2 M_\epsilon}{\partial s^2} & \frac{\partial^2 M_\epsilon}{\partial s \partial t} \\ \frac{\partial^2 M_\epsilon}{\partial t \partial s} & \frac{\partial^2 M_\epsilon}{\partial t^2} \end{pmatrix} = \frac{1}{2}\chi_\epsilon''(s - t) \begin{pmatrix} 1 & -1 \\ -1 & 1 \end{pmatrix} = \phi_\epsilon(s - t) \begin{pmatrix} 1 & -1 \\ -1 & 1 \end{pmatrix}.
\end{equation*}
%Testing the quadratic form for an arbitrary real vector $v = (v_1, v_2) \in \mathbb{R}^2$:
%\begin{equation*}
%v^T H v = \phi_\epsilon(s - t) \begin{pmatrix} v_1 & v_2 \end{pmatrix} \begin{pmatrix} v_1 - v_2 \\ -v_1 + v_2 \end{pmatrix} = \phi_\epsilon(s - t)(v_1 - v_2)^2 \ge 0
%\end{equation*}
Since $\phi_\epsilon(s - t) \ge 0$ and the matrix $\begin{pmatrix} 1 & -1 \\ -1 & 1 \end{pmatrix}$ is positive semi-definite, the Hessian matrix   is everywhere positive semi-definite, proving that $M_\epsilon$ is  convex.
\end{proof}

\section{Truncating unbounded models in dimension two}\label{b2}
In this section, we construct smoothly bounded   convex domains of finite D'Angelo type in $\mathbb C^2$ with prescribed weakly pseudoconvex loci.  The construction is obtained by truncating the rigid models from Proposition \ref{r2} with sphere barrier functions of sufficiently large radius and then smoothing the resulting corners using the regularized maximum function. This will prove   Theorem \ref{main1}.
\medskip
 
     Let $ \rho_{\text{loc}}$ denote the defining function    constructed in Proposition \ref{r2}. Namely, 
$$\rho_{\text{loc}}(z, w) = \operatorname{Re} w + y^{2m} + \Phi(x),$$
where $\Phi$ is given by \eqref{di}. We denote  the corresponding rigid cylinder domain     by $\Omega_{\text{loc}}$. 
To construct a   bounded pseudoconvex domain with the prescribed weak locus, we truncate  $\Omega_{\text{loc}} $    with a strictly convex global ball defined by $$\rho_{\text{out}}(z,w)= |z|^2 + |w|^2 - K$$ using the regularized maximum function $M_\epsilon(s, t) = \frac{s+t}{2} + \frac{1}{2}\chi_\epsilon(s-t)$ in Section \ref{rm}. More precisely, we set 
 \begin{equation}\label{pr}\rho(z,w) = M_{\epsilon}\left(\rho_{\text{loc}}(z,w), \rho_{\text{out}}(z,w)\right).\end{equation}
Here,    the positive constants $\epsilon$ and  $K$ are chosen such that 
\begin{equation}
\label{lbK}
0<\epsilon<1\qquad \text{and}\qquad K >  a^2 + \sup_{x \in E} \Phi(x)^2   +4,
\end{equation}
where  the constant $a>0$ is such that the compact set $E \subset [-a, a]$. 
%Here  the positive constant $R$ is chosen large enough such that the target set $E \subset [-a, a]$ sits deep within the interior, satisfying $R^2 \ge a^2 + \epsilon$. 

%The full complex Levi form of the glued domain satisfies the following relation on a complex tangent vector $Z$:
%\begin{equation*}
%L_{\rho}(Z) = \alpha L_{\text{loc}}(Z) + \beta L_{\text{out}}(Z) + H_{\text{max}}\left( \operatorname{Re}\langle \partial\rho_{\text{loc}}, Z \rangle, \operatorname{Re}\langle \partial\rho_{\text{out}}, Z \rangle \right),
%\end{equation*}
%where $\alpha = \frac{\partial M_\epsilon}{\partial s} \ge 0$ and $\beta = \frac{\partial M_\epsilon}{\partial t} \ge 0$. The cross-term contribution simplifies via the algebraic reduction of the smooth maximum's real Hessian:
%\begin{equation*}
%W^T H_{\text{max}} W = \kappa \left( \operatorname{Re}\langle \partial\rho_{\text{loc}} - \partial\rho_{\text{out}}, Z \rangle \right)^2 \ge 0,
%\end{equation*}
%where $\kappa = \frac{1}{2}\chi_\epsilon''(s-t) = \phi_\epsilon(s-t) \ge 0$. This guarantees that the patching operator adds no negative eigenvalues.
\medskip

\noindent\emph{Boundedness.} Making use of  the lower bound of the regularized maximum function in Lemma \ref{mm}, we have 
$$ \rho(z, w) = M_\epsilon(\rho_{\text{loc}}(z,w), \rho_{\text{out}}(z,w))\ge \rho_{\text{out}}(z,w). $$
Therefore  $\Omega =\{(z, w)\in \mathbb C^2:\rho(z, w)<0\}$ is a subset of $ \{(z, w)\in \mathbb C^2: \rho_{\text{out}}(z,w)=|z|^2+|w|^2 -K<0 \}$, which is bounded. 

\medskip

\noindent\emph{Smoothness.}  The smoothness of \(\rho\) follows from the smoothness of \(\rho_{\mathrm{loc}}\), \(\rho_{\mathrm{out}}\), and the regularized maximum function \(M_\epsilon\). To see that $b\Omega$ is smooth, we further show that
\(\nabla\rho\ne 0\) on \(b\Omega\). By the chain rule,
\begin{equation*}
\nabla\rho
=
\alpha\nabla\rho_{\text{loc}}+\beta\nabla\rho_{\text{out}},
\end{equation*}
with  $\alpha: = \frac{\partial M_\epsilon}{\partial s} \ge 0$,  $\beta:= \frac{\partial M_\epsilon}{\partial t} \ge 0$ and $\alpha+\beta=1$ by monotonicity in Lemma \ref{mm}. Suppose for contradiction that \(\nabla\rho=0\) at some point  \(p\in b\Omega\).

If \(\alpha=0\), then \(\beta=1\), and hence \(\nabla\rho_{\text{out}}=0\).  This   forces $ p =(0,0)$. However, by the choice of $\epsilon$ and $K$ in \eqref{lbK},
$$ \rho_{\text{out}}(0) =-K< -\epsilon -\Phi(0)^2-1\le -\epsilon-2|\Phi(0)| \le  -\epsilon +\Phi(0) = -\epsilon +  \rho_{\text{loc}}(0), $$
Thus, by the exactness property in Lemma \ref{mm}, we have
$\rho = \rho_{\text{loc}}$ near $(0, 0)$ where the gradient  is nonzero. This contradicts $\nabla \rho(p)=0$.      On the other hand, if \(\beta=0\), then \(\alpha=1\), and
\(\nabla\rho_{\text{loc}}=0\), which is impossible because $\frac{\partial\rho_{\text{loc}}}{\partial w}=\frac{1}{2}$. Therefore \(\alpha>0\) and \(\beta>0\), so the point lies in the transition
region \(|\rho_{\text{loc}}-\rho_{\text{out}}|<\epsilon\).

The equation \(\nabla \rho =0\) gives, componentwise,
 \begin{equation*}
\begin{split}
    &\alpha + 2\beta u=0;\\
    &\beta v=0;\\
    &    \alpha\Phi'(x) +2\beta x=0;\\
    &\alpha m y^{2m-1} +\beta y=0.
\end{split}
\end{equation*}
Since \(\alpha,\beta, m>0\), the second and the fourth equations give $$v=0,\qquad y=0.$$ The first equation   gives
\begin{equation*}
u=-\frac{\alpha}{2\beta}<0.
\end{equation*}
Multiplying the third equation by \(x\), we
obtain
$\alpha x\Phi'(x)+2\beta x^2=0.$
Since $x\Phi'(x)\ge 0$ by \eqref{pp}, we get \[x=0.\] Thus the hypothetical singular boundary point must be of the form
\begin{equation*}
(0,0,u,0),
\qquad u<0.
\end{equation*}
At such a point,
\begin{equation*}
\rho_{\text{loc}}=u,
\qquad
\rho_{\text{out}}=u^2-K.
\end{equation*}
Making use of the zero-set property in Lemma \ref{mm}, we further have
$$ -\frac{3\epsilon}{2}\le  u\le 0 \qquad\text{and}\qquad  -\frac{3\epsilon}{2}\le  u^2-K\le 0.$$
In particular, since $0<\epsilon<1$, 
$$ K\le \frac{3\epsilon}{2} +u^2 \le \frac{3\epsilon}{2} + \frac{9\epsilon^2}{4}<4,$$
 which contradicts the choice of \(K\).  Hence no such point exists,
and \(b\Omega\) is smooth.

\medskip

\noindent\emph{Convexity.} Since $\rho_{\text{loc}}$ and $\rho_{\text{out}} $ are both convex, the convexity of $\rho$  is a direct consequence of Lemma \ref{cp}. Hence the sub-level set $\Omega$ of $\rho$ is convex. In particular, $\Omega $ is pseudoconvex. 
\medskip

\begin{comment}

To prove the resulting domain $\Omega = \{\rho < 0\}$ is geometrically convex, we examine the real Hessian $D^2 \rho$ of the glued function applied to any vector $v \in \mathbb{R}^4$:
  \begin{align*}v^T (D^2\rho)\  v = &\alpha \left(v^T (D^2\rho_{\text{loc}})\  v\right) +\beta \left(v^T (D^2\rho_{\text{out}}) \ v\right)  +\\
&+\frac{\partial^2 M_{\epsilon}}{\partial s^2}(\nabla \rho_{\text{loc}} \cdot v)^2 + 2\frac{\partial^2 M_{\epsilon}}{\partial s \partial t}(\nabla \rho_{\text{loc}} \cdot v)(\nabla\rho_{\text{out}} \cdot v) + \frac{\partial^2 M_{\epsilon}}{\partial t^2}(\nabla\rho_{\text{out}}\cdot v)^2 \\
 =& \alpha \left(v^T (D^2\rho_{\text{loc}})\  v\right) + \beta \left(v^T (D^2\rho_{\text{out}}) \ v\right)  + W^T(D^2 M_\epsilon) W,
  % D^2 M_\epsilon\left(\nabla\rho_{\text{loc}} \cdot v, \nabla\rho_{\text{out}} \cdot v\right).
  \end{align*}
 with  $W: = \begin{pmatrix} \nabla\rho_{\text{loc}} \cdot v \\ \nabla\rho_{\text{out}}\cdot v\end{pmatrix} $. 
  Since both $\rho_{\text{loc}} $ and $\rho_{\text{out}} $ are convex, and $D^2M_\epsilon$ is  positive semi-definite by Lemma \ref{mm}, one has
  $$ v^T (D^2\rho)\  v\ge 0.$$
 Consequently,   $\Omega$ is convex, and  is automatically pseudoconvex. 
 \end{comment}
  \medskip

  \noindent\emph{Weak locus.} 
To investigate the weak locus of the bounded domain $\Omega$, we partition its total physical boundary $b\Omega $ into three mutually exclusive geometric zones determined strictly by the   sign and magnitude of   $ \rho_{\text{loc}}(z, w) - \rho_{\text{out}}(z, w)$.

%By analyzing the location of the scalar value $\rho_{\text{loc}}(z, w) - \rho_{\text{out}}(z, w)$ on the real line $\mathbb{R} = (-\infty, -\epsilon] \cup (-\epsilon, \epsilon) \cup [\epsilon, \infty)$, the boundary splits into three disjoint components with no missing pieces:

 {Zone 1: The far-field exterior spherical shell.}
%The exterior canopy is defined by the region where the  outer barrier function $\rho_{\text{out}}$ dominates the local cylinder model by a structural threshold margin:
\begin{equation*}
\mathcal{Z}_1 = \{ (z, w) \in b\Omega : \rho_{\text{loc}}(z, w) - \rho_{\text{out}}(z, w) \le -\epsilon\}.
\end{equation*}
By the exactness property of the regularized maximum operator $M_{\epsilon}$ established in Lemma \ref{mm},  the  outer barrier function $\rho_{\text{out}}$ dominates the local cylinder model by the $\epsilon$ margin and hence $$\rho  = \rho_{\text{out}} \qquad \text{on}\ \ \mathcal Z_1.$$ 
The Levi form matches the sphere exactly: $L_{\rho}(Z) = L_{\rho_\text{out}}(Z) > 0$.  This region is strictly pseudoconvex and contains zero weakly pseudoconvex points.

%Consequently, the physical boundary manifold in this zone is given precisely by the level set $\rho_{\text{out}} =  \|x\|^2 + \|y\|^2 + |w|^2 - K = 0$. Substituting this boundary values back into the governing inequality of $\mathcal{Z}_1$ yields:
%\begin{equation*}
%0 - \rho_{\text{loc}}(z, w) \ge \epsilon \implies \rho_{\text{loc}}(z, w) \le -\epsilon
%\end{equation*}
%This condition reveals that the unperturbed local function drops strictly below the smoothing band threshold ($-\epsilon$), locking the partition weights at $\alpha = 0$ and $\beta = 1$. The complex tangent space at these far-field boundary points is completely decoupled from the local potential and inherits the strict, non-degenerate pseudoconvexity of the ellipsoid, preventing any uncontrolled boundary leakage near the cylinder edge. 

 {Zone 2: The non-uniform transition band.} 

\begin{equation*}
\mathcal{Z}_2 = \{ (z, w) \in b\Omega : |\rho_{\text{loc}}(z, w) - \rho_{\text{out}}(z, w)| < \epsilon \}.
\end{equation*}
Inside $\mathcal{Z}_2$, the smoothing operator $M_\epsilon$ blends the local cylinder function and the outer barrier function  within an $\epsilon$-distance of each other,  and $\alpha  = \frac{\partial M_\epsilon}{\partial s} >0$, $\beta = \frac{\partial M_\epsilon}{\partial t} > 0$. 
 For ${\xi}=({\xi}_1, {\xi}_2) \in T_p^{1,0}(b\Omega)$, the Levi form of the domain is
\begin{equation}\label{lm}
\begin{split}
    L_\rho({\xi}, \bar{{\xi}}) &=
\alpha   L_{\rho_{\text{loc}} }({\xi}, \bar{{\xi}}) +  
\beta
 L_{\rho_{\text{out}} }({\xi}, \bar{{\xi}})\\
&\quad+
(M_\epsilon)_{ss}(s,t)\,|\partial s({{\xi}})|^2
+
2\operatorname{Re}
\left(
(M_\epsilon)_{st}(s,t)\,\partial s({{\xi}})\overline{\partial t({{\xi}})}
\right)
+
(M_\epsilon)_{tt}(s,t)\,|\partial t({{\xi}})|^2\\
& = \alpha   L_{\rho_{\text{loc}} }({\xi}, \bar{{\xi}}) +  
\beta
 L_{\rho_{\text{out}} }({\xi}, \bar{{\xi}})+ \left(\overline{\partial s({\xi})}\ \  \overline{ 
\partial t({\xi})}\right)
 D^2M_\epsilon(s,t) 
\begin{pmatrix}
\partial s({\xi})\\[2mm]
\partial t({\xi})
\end{pmatrix}.
\end{split}
\end{equation}
where 
$\partial s({\xi})=\sum_{j=1}^2 s_j {\xi}_j$ and $
\partial t({\xi})=\sum_{j=1}^2 t_j {\xi}_j
$ when viewing ${\xi}$ as a vector field. Since $D^2M_\epsilon$ is positive semi-definite,  the last term is nonnegative. Thus  the total complex Levi form of the domain satisfies $$L_{\rho }  \ge \beta L_{\rho_{\text{out}}} >0.$$
%Because the outer ellipsoidal barrier is strictly convex, its Levi form provides a uniform, strictly positive lower bound, guaranteeing that this entire intermediate blending band remains strictly pseudoconvex and contains zero weak points. Because the outer sphere is strictly convex, its Levi form contribution provides a uniform positive lower bound: $\beta L_{\rho_\text{out}}(X) > 0$. Since the local form and the Hessian cross-terms are strictly non-negative, the total sum is strictly positive: $L_{\rho}(X) \ge \beta L_{\rho_\text{out}}(X) > 0$. 
The transition layer is strictly pseudoconvex and creates zero weakly pseudoconvex points.

 {Zone 3: The deep interior local floor.} %The   region consists of the lower boundary canopy sitting directly over the targeted compact footprint $E \times \{0\}$:
 \begin{equation*}\mathcal{Z}_3 = \{ (z, w) \in b\Omega : \rho_{\text{loc}}(z, w) - \rho_{\text{out}}(z, w) \ge \epsilon \}.
\end{equation*}
In this region, the local cylinder model dominates the outer spherical barrier function by an explicit thickness margin of $\epsilon$. By Lemma \ref{mm}, the regularized maximum reduces to  $$\rho = \rho_{\text{loc}} \qquad \text{on}\ \ \mathcal Z_3.$$ 
Therefore the weak locus $W$ of $\Omega$ satisfies
$$ W = W_{\text{loc}}\cap \mathcal Z_3,$$
where $W_{\text{loc}} $ denotes the weak locus of $\rho_{\text{loc}}$. 

To show that $\Omega$ has the desired weak locus, it suffices to verify that   every point of the form $p = (x_p + i0, -\Phi(x_p) + iv_p)\in W_{\text{loc}}$, where $x_p\in E$ and $|v_p|<1$,  satisfies $\rho_{\text{loc}}  - \rho_{\text{out}}  \ge  \epsilon   $. Indeed, for each such point,  we have 
\begin{equation*}
  \rho_{\text{loc}}(p) - \rho_{\text{out}}(p) = 0 - \left( |x_p|^2 + |\Phi(x_p)|^2 +|v_p|^2 - K \right) = K - |x_p|^2 - \Phi(x_p)^2-|v_p|^2.
\end{equation*}
By the choice of  $K$ in \eqref{lbK},  
\begin{equation*}
K - |x_p|^2 - \Phi(x_p)^2 -|v_p|^2>  K -  a^2 -\sup_{x  \in E} \Phi(x)^2 -1 > \epsilon.
\end{equation*}
  Namely, $p\in \mathcal Z_3$. %  The interior weak locus remains completely unperturbed. 
Consequently, the weakly pseudoconvex locus of $\Omega_{\mathrm{loc}}$ in the region $|\operatorname{Im}w|<1$ remains unchanged in $\Omega$. Moreover, the corresponding D'Angelo type at these points   is carried over to $\Omega$ unchanged. 
\bigskip

The following figure schematically illustrates  the weakly pseudoconvex locus in Theorem~\ref{main1} when $E$ is the  classical middle-thirds Cantor set on $[0,1]$. The weak locus consists of a family of one-dimensional fibers emanating from $E$ in the Reeb direction, forming a ``Cantor forest'' of vertical segments rooted at $E$. The finitely many segments displayed schematically represent the uncountable family of fibers lying over $E$. 

\begin{figure}[!t]
\centering
\begin{tikzpicture}[scale=1.0]

% Level-5 middle-thirds Cantor set on [0.4, 4.6] (width 4.2).
% Each surviving piece has width 4.2/243 = 0.017284.
% 32 left endpoints:
\def\cantorleft{%
  0.000000, 0.041152, 0.123457, 0.164609,
  0.370370, 0.411523, 0.493827, 0.534979,
  1.111111, 1.152263, 1.234568, 1.275720,
  1.481481, 1.522634, 1.604938, 1.646091,
  3.333333, 3.374486, 3.456790, 3.497942,
  3.703704, 3.744856, 3.827160, 3.868313,
  4.444444, 4.485597, 4.567901, 4.609053,
  4.814815, 4.855967, 4.938272, 4.979424}

%\def\cantorleft{%
%  0.40000, 0.45679, 0.56296, 0.61975,
%  0.79210, 0.84889, 0.95506, 1.01185,
%  1.36296, 1.41975, 1.52593, 1.58272,
%  1.75506, 1.81185, 1.91802, 1.97481,
%  2.62519, 2.68198, 2.78815, 2.84494,
%  3.01728, 3.07407, 3.18025, 3.23704,
%  3.58815, 3.64494, 3.75111, 3.80790,
%  3.98025, 4.03704, 4.14321, 4.20000}
% piece width w = 0.017284; center = left + w/2 = left + 0.008642

%============================================================
% LEFT PANEL: Theorem 1.1 -- the "Cantor forest"
%============================================================
\begin{scope}[shift={(0,0)}]

  % axes
  \draw[->] (-0.1,0) -- (6,0) node[right] {\footnotesize $\operatorname{Re} z$};
  \draw[->] (0,-2.3) -- (0,2.3) node[above] {\footnotesize $\operatorname{Im} w$};

  \def\ymin{-2.1}
  \def\ymax{2.1}

  \pgfmathsetseed{2026}

\foreach \a in \cantorleft{
  \pgfmathsetmacro{\treeheight}{0.8 + 1.35*rnd}
  \draw[gray!80,line width=0.4pt]
    (\a+0.010642,-\treeheight)
    --
    (\a+0.010642,\treeheight);
}

% Base set E
\foreach \a in \cantorleft{
  \draw[blue,line width=1pt]
    (\a,-0.03) -- (\a+0.020284,-0.03);
}

% Brace
%\draw[
%  decorate,
%  decoration={brace,amplitude=4pt}
%]
%(-0.12,\ymin) -- (-0.12,\ymax)
%node[midway,left=4pt] {\footnotesize $V$};

  % fibers as thin vertical segments ("trees") over each Cantor piece
 
  \node[blue] at (0.4,0) {\footnotesize $E$};
  \node[below] at (-0.1,0) {\footnotesize $0$};
  \node[below] at (5/3,0) {\footnotesize $\frac{1}{3}$};
  \node[below] at (10/3,0) {\footnotesize $ \frac{2}{3}$};
  \node[below] at (5,0) {\footnotesize $1$};
  \node[align=center] at (2.1,-3)
       {\footnotesize   Weak locus in Theorem~\ref{main1}\\[-2pt]
        \footnotesize $W=\{(z, w)\in b\Omega: \operatorname{Re}z\in E,\ \operatorname{Im}z=0\}\cap \ \overline{\mathcal U}$};

\end{scope}
\end{tikzpicture}

\caption{Schematic representation of the weakly pseudoconvex locus in the $(\operatorname{Re}z,\operatorname{Im}w)$-plane, corresponding to the complex-tangential and Reeb directions, respectively. The condition $\operatorname{Im}z=0$ is suppressed, while $\operatorname{Re}w$ is determined by the defining equation in Proposition~\ref{r2}, and the extent in the $\operatorname{Im}w$-direction is restricted by the neighborhood $\mathcal U$.  The base set $E\subset\mathbb R$ in blue is represented by the middle-thirds Cantor set on $[0,1]$, with five iterations shown.}
\label{fig:cantor-forest}
\end{figure}
\FloatBarrier

 \section{Constructing convex models in dimension two, continued}\label{b2'}
 
Given a compact set $E\subset \mathbb R$, we construct in this section a smoothly bounded convex domain in $\mathbb C^2$ whose weak locus is   diffeomorphic to $E$.  This will  prove Theorem \ref{main1'}. Write  $ (z,w)=(x+iy,u+iv)\in \mathbb C^2.$ After a translation in the \(x\)-variable, we may assume without loss of generality that \(0\notin E\). 
\medskip

To eliminate the tree-like components over the base points in $E$ in Theorem \ref{main1}, we modify the local rigid model by adding a controlling term  in the $v$ direction. More precisely, for  fixed integer \(N\ge m\ge 2\), we define the local model by
\begin{equation*}
\rho_{\text{loc}}(z,w)
=
u+v^{2N}+y^{2m}+\Phi(x),
\end{equation*}
where $\Phi$ is defined as in  \eqref{di}.
The strictly convex global barrier and the patched defining function stays the same:  
\begin{equation*}
\rho_{\text{out}}(z,w)
=
x^2+y^2+u^2+v^2-K,
\end{equation*}
where \(K>0\) is chosen sufficiently large as in \eqref{lbK}, and 
\begin{equation*}
\rho(z,w)
=
M_\epsilon\bigl(\rho_{\text{loc}}(z,w),\rho_{\text{out}}(z,w)\bigr).
\end{equation*}
Set
\begin{equation*}
\Omega=\{(z,w)\in\mathbb C^2:\rho(z,w)<0\}.
\end{equation*}
The boundedness of    \(\Omega\) follows from  $
\rho_{\text{out}}\le \rho< 0. $  Since both $\rho_{\text{loc}}$ and $\rho_{\text{out}}$ are convex, the convexity of $\Omega$ then follows from Lemma \ref{cp}.  In particular, $\Omega$ is  pseudoconvex.
 \medskip

\noindent\emph{Smoothness.}   Suppose that 
\begin{equation*}
\nabla\rho
=
\alpha\nabla\rho_{\text{loc}}+\beta\nabla\rho_{\text{out}} 
\end{equation*}
is zero at some point of \(b\Omega\). This can only happen when both \(\alpha>0\) and \(\beta>0\) as argued in   Section \ref{b2}, so the point lies in the transition
region \(|\rho_{\text{loc}}-\rho_{\text{out}}|<\epsilon\). In this case, by inspecting the components of $\nabla \rho$, one has 
\begin{equation*}
    \begin{split}
      & \alpha\,N v^{2N-1}+\beta v =0;\\
&\alpha\,m y^{2m-1}+\beta y=0;\\
&\alpha\,\Phi'(x)+2\beta x=0;\\
&\alpha+2\beta u =0. 
    \end{split}
\end{equation*}
With a similar argument as in Section \ref{b2}, one can obtain that   any hypothetical singular boundary point must be of the form
\begin{equation*}
(0,0,u,0),
\qquad u<0.
\end{equation*}
At such a point,
\begin{equation*}
\rho_{\text{loc}}=u,
\qquad
\rho_{\text{out}}=u^2-K.
\end{equation*}
   This is
impossible since   \(K\) is chosen  sufficiently large as in \eqref{lbK}.  Hence   \(b\Omega\) is smooth.
 \medskip

\noindent\emph{Weak locus.}  The Levi form takes a similar form as in \eqref{lm}.  Hence  the weakly pseudoconvex points lie in the region where $\beta =0$ and $\alpha =1$, %equivalently, $\chi_\epsilon'( \rho_{\text{loc}} - \rho_{\text{out} }) =1 $ by \eqref{dm}. It follows further by  \eqref{lb} that 
where  $\rho_{\text{loc}} - \rho_{\text{out} }\ge \epsilon $, and
\begin{equation*}
\rho=\rho_{\text{loc}}.
\end{equation*}
%Thus all weakly pseudoconvex points of the patched domain occur in the region where the local model governs the boundary.
On this region, the weakly pseudoconvex locus is determined by the Levi form 
of \(\rho_{\text{loc}}\): 
\begin{equation}\label{lf}
    L(\xi, \bar \xi) =\frac14 |\xi_1|^2\left(\psi(x)+2m(2m-1)y^{2m-2}\right) + \frac12|\xi_2|^2\, N(2N-1)v^{2N-2}   
\end{equation}
for every nonzero vector $\xi= (\xi_1, \xi_2)\in T_p^{1,0}(b\Omega)  $ that satisfies   
\begin{equation}\label{xi}
   \xi_1 ( \Phi'(x) -2imy^{2m-1}  )+\xi_2 (1-2iNv^{2N-1}) =0.  
\end{equation}
  We first note that $$\xi_1\ne 0.$$ Otherwise, since the coefficient $ 1-2iNv^{2N-1} $  is never zero in \eqref{xi}, it follows that  $\xi_2=0$. This contradicts  the nonzero assumption of $\xi$. 
  
  Since  the Levi form  \eqref{lf} is a sum of nonnegative terms, its vanishing forces each term in the sum to vanish. Therefore, using the construction of $\psi$ and the fact that $\xi_1\ne 0$, we obtain   
\begin{equation*}
x\in E,
\qquad
y=0,
\qquad
\xi_2 v=0. 
\end{equation*}
On the other hand, since the nonnegative function  $\psi$ is strictly positive off $E$ and $0\notin E$, $$ \Phi'(x) = \int_0^x \psi(t)dt\ne 0\ \ \  \text{for all}\ \ x\in E. $$ This, together with \eqref{xi}, implies $\xi_2\ne 0$. Thus
$$v=0. $$
Hence, the weakly pseudoconvex locus of the local model is precisely the prescribed set. By the choice of $K$ 
in \eqref{lbK}, together with a similar argument at the end of the previous  section for zone 3,  these weakly pseudoconvex points are carried over from $\Omega_{\text{loc}}$ to $\Omega$. The desired weak locus is thus verified. 
  
\begin{comment}
In the general case, additional weak points may occur over points where
\(\Phi'(x)=0\).  Indeed, because
\begin{equation*}
\Phi'(x)=\int_0^x\psi(t)\,dt,
\end{equation*}
the vanishing of \(\Phi'\) is governed by the topology of the intervals on
which \(\psi\) vanishes.  For example, if \(E\) is totally disconnected and
contains the origin, then
\begin{equation*}
\Phi'(x)=0
\qquad\Longleftrightarrow\qquad
x=0.
\end{equation*}
In that case one obtains an additional weak segment over the origin:
\begin{equation*}
W_2
=
\{(0,-v^{2N}+iv): |v|<\sqrt{R^2-\epsilon}\}.
\end{equation*}
Thus the weakly pseudoconvex locus of the global model consists of the
prescribed closed set together with possible one-dimensional segments in the
\(w\)-fiber over the roots of \(\Phi'\).
\end{comment}

 \medskip

\noindent\emph{D'Angelo type.}     Fix a point
 $ p= (z_p, w_p) = (x_p+i0, -\Phi(x_p) +i0)\in W$  with  \(x_p\in E\).  We use the biholomorphic change of coordinates
\begin{equation*}
\begin{split}
&{\tilde z}=z-x_p;\\
&\widetilde w=w-w_p+\Phi'(x_p)(z-x_p),
\end{split}
\end{equation*}
which sends \(p\) to the origin.  In these coordinates, the defining
function has the local form
\begin{equation*}
\widetilde\rho({\tilde z},\widetilde w)
=
\operatorname{Re}\widetilde w
+
\left(\operatorname{Im}\widetilde w
-\Phi'(x_p)\operatorname{Im}{\tilde z}\right)^{2N}
+
(\operatorname{Im}{\tilde z})^{2m}
+
R(\operatorname{Re}{\tilde z}),
\end{equation*}
where \(R\) is flat at the origin.

Let
 $\gamma(\zeta)=({\tilde z}(\zeta),\widetilde w(\zeta))$ 
be a nonconstant holomorphic curve with \(\gamma(0)=0\). Pulling back $\widetilde\rho$ along $\gamma$, we obtain
$$
\begin{aligned}
(\widetilde\rho\circ\gamma)(\zeta)
&=
\operatorname{Re}\widetilde w(\zeta)   +
\left(
\operatorname{Im}\widetilde w(\zeta)-
\Phi'(x_p)
\operatorname{Im}\tilde z(\zeta)
\right)^{2N} +
(\operatorname{Im}\tilde z(\zeta))^{2m}
+
R(\operatorname{Re}\tilde z(\zeta)).
\end{aligned}
$$
First suppose that $\nu(z(\zeta))\ge \nu(w(\zeta))$. Then
$\operatorname{Re}\widetilde w(\zeta)$ has vanishing order $\nu(w(\zeta))  $, while all other terms have order strictly larger than $\nu(w(\zeta))$. Hence
$ \nu(\widetilde\rho\circ\gamma)= \nu(w(\zeta)),$  and 
$$
\frac{\nu(\widetilde\rho\circ\gamma)}
{\nu(\gamma)}=1.
$$

Now suppose that $\nu(z(\zeta))< \nu(w(\zeta))$. Then $\nu(\gamma)=\nu(z(\zeta))$.  Moreover,
$(\operatorname{Im}\tilde z(\zeta))^{2m}$ has vanishing order $2m\nu(z(\zeta))$ and $ 
R(\operatorname{Re}\tilde z(\zeta))
$ 
vanishes to infinite order. Similarly as in the proof of Proposition \ref{lc}, let $P_d$ be the first nonzero homogeneous term in the Taylor expansion of
$
\left(
\operatorname{Im}\widetilde w(\zeta) -
\Phi'(x_p)\operatorname{Im}\tilde z(\zeta)
\right)^{2N}
+
(\operatorname{Im}\tilde z(\zeta))^{2m}.
$
Then $P_d$ is nonnegative and not constantly zero, with  degree
$$
d\le 2m \nu(z(\zeta)).
$$
Since $P_d(0)=0$, the maximum principle implies that $P_d$ cannot be harmonic. On the other hand, every homogeneous term of
$\operatorname{Re}\widetilde w(\zeta)$ is harmonic. Thus
$\operatorname{Re}\widetilde w(\zeta)$ cannot cancel $P_d$.
Therefore
$$
\nu(\widetilde\rho\circ\gamma)
\le d
\le  2m \nu(z(\zeta)).
$$
and 
$$
\frac{\nu(\widetilde\rho\circ\gamma)}
{\nu(\gamma)}
\le
2m.
$$
Moreover, this bound is sharp by letting the curve
 be $
\gamma(\zeta)=(\zeta,0),
$ which 
gives
\begin{equation*}
\nu(\widetilde\rho\circ\gamma)=2m.
\end{equation*}
Hence the D'Angelo type along the prescribed weak locus is exactly \(2m\).

\section{Extending  local models in higher dimensions}\label{bn}
In this section, we prove Theorems \ref{main2} and \ref{main3} by smoothly extending the local hypersurfaces obtained in Propositions  \ref{lp} and \ref{lc} using an outer ellipsoidal barrier and the regularized maximum function. 
\medskip

Fix $x_0\in E\subset \mathbb R^{n-1}$ and $m\ge 2$. Let $\rho_{\text{loc}}$ be  the local defining function  constructed in Proposition   \ref{lp},  defined on $U\times \mathbb C$ and  denote the corresponding local hypersurface  by $M_{\text{loc}}$. Namely,
$$ \rho_{\text{loc}}(z,w) = \operatorname{Re}w+f(z, \bar z) $$
with $f$ satisfying \eqref{pnf}. 
Without loss of generality, we may assume that  the neighborhood $U: =B_{\delta}(x_0+i0)$,   the ball centered at $x_0+i0$ with radius $\delta>0$ in $\mathbb C^{n-1}$.  For $\epsilon>0$,   we shall patch the local defining function with the strictly convex ellipsoidal barrier:
$$\rho_{\text{out}}(z, w) =  A|z-x_0|^2 + |w|^2 - K, $$
where   $A$ and $K $ are  constants satisfying
\begin{equation}\label{a}
    A> \max\left\{\frac{32( \sup_{z\in U}|f(z, \bar z)|^2 + \epsilon)}{\delta^2}, \frac{4(\sup_{z\in U}|f(z, \bar z)|  +2\epsilon ) \sup_{z\in U}|\nabla f(z, \bar z)|}{\delta},  1 \right\}
\end{equation}
and 
\begin{equation}\label{kn}
   \frac{A\delta^2}{4}  > K> \frac{A\delta^2}{8} +\sup_{z\in U}|f(z, \bar z)|^2 +\epsilon.
\end{equation}
Such a choice of $K$ is possible by \eqref{a}. In particular, one has \begin{equation}\label{ep} 
  \frac{A\delta^2}{4}   >K> \frac{A\delta^2}{8}\qquad\text{and}\qquad 0<\epsilon <\frac{A\delta^2}{32}.
\end{equation}  
As will be seen later, the upper bound of $K$ in \eqref{kn} is to confine the desired global domain within a cylinder where $\rho_{\text{loc}}$ is defined, and the lower bound there is to preserve the weak locus of $M_{\text{loc}}$ near $(x_0+i0, -f(x_0, x_0)+i0)$. 

We define, on \(U\times\mathbb C\), our   defining function again via the regularized maximum function $M_\epsilon$:
\begin{equation*}
\rho(z, w) = M_\epsilon(\rho_{\text{loc}}(z, w), \rho_{\text{out}}(z, w)), 
\end{equation*}
and set
$$
\Omega
=\{(z,w)\in U\times\mathbb C:\rho(z,w)<0\}. 
$$
%and
%$$ M = {(z,w)\in U\times\mathbb C:\rho(z,w)=0}.$$
%Although $\rho_{\mathrm{loc}}$ is only locally defined,
We claim that the set $ {\Omega}$ is compactly contained in the region $ U\times\mathbb C$,  where $\rho_{\mathrm{loc}}$ is well defined. Indeed, by Lemma \ref{mm},  we have    
$$
\rho_{\mathrm{out}}(z,w)\le\rho(z,w) \le  0  \qquad \text{on}\qquad \overline{\Omega}.
$$
Since
$ 
\rho_{\mathrm{out}}(z,w)=
A|z-x_0|^2+|w|^2-K,
$ 
it follows that
\begin{equation*} 
    A|z-x_0|^2+|w|^2\le K.
\end{equation*}
In particular,
$$
|z-x_0|
\le
\sqrt{\frac{K}{A}}<
\frac{\delta}{2} \qquad\text{and}\qquad |w| \le \sqrt K.
$$
by the choice of $K$ in \eqref{kn}. 
Therefore
$ 
z\in B_{\delta/2}(x_0+i0)
\Subset
B_\delta(x_0+i0)=U.
$ 
The claim is thus proved. Consequently,  $\overline{\Omega}$ cannot meet the boundary of the local coordinate cylinder, and   $\Omega$ is a well-defined bounded domain in $\mathbb C^n$.

%Moreover, since $\nabla\rho\ne 0$ on $M$, the implicit function theorem implies that $$ M=b\Omega $$ is a smooth hypersurface. 
%Since $\overline{\Omega}$ is compactly contained in $U\times\mathbb C$, no boundary component is introduced from the boundary of the local chart.

 To see $b\Omega$ is smooth, we need to verify that $\nabla \rho$ is nonvanishing on $\rho=0$. Again, since 
\begin{equation*}
\nabla\rho
=
\alpha\nabla\rho_{\text{loc}}+\beta\nabla\rho_{\text{out}},
\end{equation*}
with $\alpha\ge 0, \beta\ge 0$ and $\alpha+\beta=1$, it suffices to only  consider the region when both \(\alpha>0\) and \(\beta>0\), namely,  the transition region  \(|\rho_{\text{loc}}-\rho_{\text{out}}|\le\epsilon\). On this region, if \(\rho=0\), then it follows from  the zero-set property in Lemma \ref{mm} that 
\begin{equation}\label{bo}
     |\rho_{\text{loc}}|< 2\epsilon, \qquad |\rho_{\text{out}}| <2\epsilon. 
\end{equation}
  Let 
\[ S=\sup_{z\in U}|f(z, \bar z)|, \qquad L=\sup_{z\in U}|\nabla f(z, \bar z)|. \] By the bounds for $ \rho_{\text{loc}} $ and $ \rho_{\text{out}} $ in \eqref{bo}, together with  their explicit  expressions,  we obtain 
\begin{equation}\label{u}
    |u|\le S+2\epsilon, 
\end{equation} 
and  
\[  A|z-x_0|^2+|w|^2-K\ge -2\epsilon.   \]  
Using the  inequalities \eqref{ep} for $K$  and $\epsilon$ , we obtain 
\begin{equation}\label{11}
A|z-x_0|^2+v^2 \ge K -2\epsilon> \frac{A\delta^2}{16}.     
\end{equation} 
Suppose, for contradiction, that \(\nabla\rho=0\) at  a point on $b\Omega$. 
 Since \(\beta>0\) and the \(v\)-component of \(\nabla\rho=0\) gives \( 2\beta v=0, \) we have  \[v=0.\] 
 Plugging the above into \eqref{11} further yields 
   \begin{equation}\label{z}
       |z-x_0| > \frac{\delta}{4}.  
   \end{equation} 
   The \(u\)-component of \(\nabla\rho=0\) gives \[ \alpha+2\beta u=0. \] Hence by \eqref{u}
  \begin{equation}\label{al}
      \alpha\le 2\beta |u| \le 2\beta(S+2\epsilon).  
  \end{equation} 
   The \(z\)-components of \(\nabla\rho=0\) give \[ \alpha \nabla f(z, \bar z)+2\beta A(z-x_0)=0. \] Taking norms and dividing by $2\beta$, we get from \eqref{al} that 
     \[ A|z-x_0| \le \frac{\alpha }{2\beta}L \le (S+2\epsilon)L. \] Combining this further with  \eqref{z}, we obtain 
     \[\frac {A \delta}{4} < (S+2\epsilon)L. \] 
     This contradicts with \eqref{a}. Therefore \(\nabla\rho\neq 0 \) on  \(b\Omega\), and the boundary \(b\Omega\) is smooth.

The verification of   pseudoconvexity  of the domain $\Omega$ is the same  as in Section \ref{b2} in dimension $n=2$, and will be  omitted. It remains only  to determine  the weak locus. As before,  we divide into three regions according to the value of $\rho_{\text{loc}} - \rho_{\text{out}}$.   

Zone 1:  The portion dominated by the  outer barrier function $\rho_{\text{out}}$:
\begin{equation*}
\mathcal{Z}_1 = \{ (z, w) \in b\Omega : \rho_{\text{loc}}(z, w) - \rho_{\text{out}}(z, w) \le -\epsilon\},
\end{equation*}
where   $$\rho  = \rho_{\text{out}} \qquad \text{on}\ \ \mathcal Z_1.$$ 
The Levi form matches the sphere exactly: $L_{\rho}(Z) = L_{\rho_\text{out}}(Z) > 0$.  This region is strictly pseudoconvex and contains zero weakly pseudoconvex points.

%Consequently, the physical boundary manifold in this zone is given precisely by the level set $\rho_{\text{out}} =  \|x\|^2 + \|y\|^2 + |w|^2 - K = 0$. Substituting this boundary values back into the governing inequality of $\mathcal{Z}_1$ yields:
%\begin{equation*}
%0 - \rho_{\text{loc}}(z, w) \ge \epsilon \implies \rho_{\text{loc}}(z, w) \le -\epsilon
%\end{equation*}
%This condition reveals that the unperturbed local function drops strictly below the smoothing band threshold ($-\epsilon$), locking the partition weights at $\alpha = 0$ and $\beta = 1$. The complex tangent space at these far-field boundary points is completely decoupled from the local potential and inherits the strict, non-degenerate pseudoconvexity of the ellipsoid, preventing any uncontrolled boundary leakage near the cylinder edge. 

  Zone 2:  The transition layer between the local hypersurface and the barrier sphere: 
\begin{equation*}
\mathcal{Z}_2 = \{ (z, w) \in b\Omega : |\rho_{\text{loc}}(z, w) - \rho_{\text{out}}(z, w)| < \epsilon \}
\end{equation*}
Investigating  the Levi form with respect to  $\rho$ in exactly the same way on as in $n=2$ case in Section \ref{b2}, we can verify that the  transition layer is strictly pseudoconvex and creates zero weakly pseudoconvex points.
%Because the outer ellipsoidal barrier is strictly convex, its Levi form provides a uniform, strictly positive lower bound, guaranteeing that this entire intermediate blending band remains strictly pseudoconvex and contains zero weak points. Because the outer sphere is strictly convex, its Levi form contribution provides a uniform positive lower bound: $\beta L_{\rho_\text{out}}(X) > 0$. Since the local form and the Hessian cross-terms are strictly non-negative, the total sum is strictly positive: $L_{\rho}(X) \ge \beta L_{\rho_\text{out}}(X) > 0$. 

  Zone 3: The portion dominated by $\rho_{\text{loc}}$: 
  \begin{equation*}\mathcal{Z}_3 = \{ (z, w) \in b\Omega : \rho_{\text{loc}}(z, w) - \rho_{\text{out}}(z, w) \ge \epsilon \},
\end{equation*}
where  $$\rho  = \rho_{\text{loc}} \qquad \text{on}\ \ \mathcal Z_3.$$
Therefore the weak locus $W$ of $\Omega$ satisfies
$$ W = W_{\text{loc} }\cap \mathcal Z_3,$$
where $W_{\text{loc}} $ denotes the weak locus of $M_{\text{loc}}$. 

It then suffices to verify  that   a neighborhood of  $ p_0 = (x_0+i0, -f(x_0, x_0)+i0)$ in $M_{\text{loc}}$ is  contained in $ \mathcal Z_3$.   Let $p=(z_p, w_p)\in M_{\text{loc}}$,  with  $z_p\in B_{\frac{\delta}{3}}(x_0+i0)$, $w_p = -f(z_p, \bar z_p)+iv_p $ and $|v_p|<\frac{\delta}{9} $.  Then    $\rho_{\text{loc}}(p)=0$. On the other hand,  using the lower bound of $K$ in \eqref{kn}, we obtain   
\begin{equation*}
\begin{split}
   \rho_{\text{out}}(p)=  A|z_p-x_0|^2+|w_p|^2-K 
   \le& \  \frac{A\delta^2}{9} +\sup_{z\in U}|f(z, \bar z)|^2 +   \frac{\delta^2}{81} -K\\
   <&\  \frac{A\delta^2}{8} +\sup_{z\in U}|f(z, \bar z)|^2 -K <-\epsilon.
\end{split}
\end{equation*}
Consequently,    $$\rho_{\text{loc}}(p) - \rho_{\text{out}}(p)>  \epsilon.$$ Thus $p\in \mathcal Z_3$. Namely,  a neighborhood of \(p_0\) in \(M_{\mathrm{loc}}\) is contained in \(\mathcal Z_3\), and on this neighborhood the global hypersurface \(b\Omega\) agrees with the local hypersurface \(M_{\mathrm{loc}}\).
Therefore, the corresponding D'Angelo type is carried over to $b\Omega$ unchanged. 

\medskip
   
 For the case  $E\subset \mathbb R^{n-2}$,  Proposition  \ref{lc} gives rise to a local hypersurface  $M_{\text{loc}}$  defined on $(U+i\mathbb R^{n-1})\times \mathbb C\subset \mathbb C^n$, where $U$ is a neighborhood  of $(\hat x_0, 0)$ in $\mathbb R^{n-1}$. A careful inpsection of the proof shows that   the tube neighborhood $U+i\mathbb R^{n-1}$ in Proposition  \ref{lc} may be replaced by a ball $B_\delta((\hat x_0, 0)+i0)\subset \mathbb C^{n-1}$ without affecting the  conclusion of the proposition. Therefore    the  patching argument and verification carried out  above  for Proposition \ref{lp} would apply equally  to the case  $E\subset \mathbb R^{n-2}$ in Proposition  \ref{lc}, with $x_0$ replaced by $(\hat x_0, 0)$. We therefore omit the details.

\appendix

\section{Proof of Lemmas \ref{ld1}, \ref{covering} and \ref{cp}}\label{ap}
We first prove Lemma \ref{ld1}, which relates  the determinant of the bordered Hessian  to the Levi determinant.

\begin{proof}[Proof of Lemma \ref{ld1}. ]
%To evaluate the determinant cleanly, we perform a unitary change of basis in $\mathbb{C}^n$ that aligns the boundary's complex normal direction with the $n$-th coordinate axis. 
Choose an orthonormal basis $\{e_1, \dots, e_{n-1}\}$ for the $(n-1)$-dimensional complex tangent space $T_p^{1,0}(b\Omega)$.
Define the $n$-th basis vector as the normalized complex normal 
\[
e_n = \frac{1}{|\partial\rho|} (\bar{\partial}\rho)^T.
\]
Construct the $n \times n$ unitary matrix $U = [e_1, e_2, \dots, e_n]$, where $e_k$ are column vectors. 
 
Under the holomorphic coordinate change $z = U{\tilde z}$, we define the localized defining function $\tilde{\rho}({\tilde z}) = \rho(U{\tilde z})$. In particular, $$\det(H^{\text{bd}}_\rho) = \det(\tilde H^{\text{bd}}_{\tilde \rho}),$$
where $H^{\text{bd}}_\rho$ and $\tilde H^{\text{bd}}_{\tilde \rho}$ are the corresponding  bordered Hessian matrix with respect to coordinates $  z$ and $\tilde z$, respectively. 
By the  chain rule, 
\[
\frac{\partial\tilde{\rho}}{\partial {\tilde z}_k} = \sum_{j=1}^n \frac{\partial \rho}{\partial z_j} \frac{\partial z_j}{\partial {\tilde z}_k}  = \sum_{j=1}^n \frac{\partial\rho}{\partial z_j} (e_k)_j =0,
\]
where the last equality is due to the fact that the vector $e_k $ lies in the complex tangent space $T_p^{1,0}(b\Omega)$. For the normal direction $k=n$, substituting the components of $e_n$ yields 
\[
\frac{\partial\tilde{\rho}}{\partial {\tilde z}_n} = \sum_{j=1}^n \frac{\partial\rho}{\partial z_j} \cdot  \frac{1}{|\partial\rho|} \frac{\partial \rho}{\partial \bar{z}_j}  = \frac{1}{|\partial\rho|} \sum_{j=1}^n \left| \frac{\partial\rho}{\partial z_j} \right|^2 = |\partial\rho|.
\]
Thus, in the ${\tilde z}$-coordinates, the complex gradient row vector is exactly 
\[
\partial_{\tilde z} \tilde{\rho} = (0, 0, \dots, 0, |\partial\rho|).
\]
%Since $\rho$ is real-valued, conjugating the gradient yields the identical anti-holomorphic row vector:
%\[(\bar{\partial}_{\tilde z} \tilde{\rho}) = (0, 0, \dots, 0, |\partial\rho|).\]
On the other hand, by construction and definition, the upper-left $(n-1) \times (n-1)$ block of the transformed Hessian $H_\rho$ is precisely the restricted Hessian $\tilde{H}_\rho$.  Therefore, $\tilde H^{\text{bd}}_{\tilde \rho}$ takes the following form 
\[
\tilde H^{\text{bd}}_{\tilde \rho} = \begin{pmatrix} 0 & 0 & \dots & 0 & |\partial\rho| \\ 0 & & & & * \\ \vdots & & \tilde{H}_\rho & & \vdots \\ 0 & & & & * \\ |\partial\rho| & * & \dots & * & * \end{pmatrix}.
\]
We then obtain by cofactor expansion that 
\[
\det(\tilde H^{\text{bd}}_{\tilde \rho})   = -|\partial\rho|^2 \det(\tilde{H}_\rho) =  -|\partial\rho|^2 \prod_{j=1}^{n-1} \lambda_j.
\]
The proof is complete. 
\end{proof}

The following lemma provides   the construction of a locally finite covering for open sets of $\mathbb R^n$, as used in Proposition \ref{Wf}. 

\begin{lemma}\label{covering}
    Let $U$ be an open set in $\mathbb R^d$. For any $r>0$, there exists a countable, locally finite collection of open balls with radius less than $r$ that covers $U$.
\end{lemma}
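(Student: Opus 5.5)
The plan is to ignore the geometry of $U$ entirely and use a uniform lattice cover of all of $\mathbb R^d$. Such a cover automatically covers $U$, and it is locally finite at every point of $\mathbb R^d$, not just at points of $U$.

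\emph{Construction.} Fix $r>0$ and set $h=\frac{r}{2\sqrt d}$. Consider the countable family of balls
\[
\mathcal B=\{B_{r/2}(c): c\in h\mathbb Z^d\}.
\]
Each ball has radius $r/2<r$. The family is countable because $\mathbb Z^d$ is countable.

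\emph{Covering.} Given $x\in\mathbb R^d$, round each coordinate of $x/h$ to a nearest integer. This produces $c\in h\mathbb Z^d$ with $|x_j-c_j|\le h/2$ for every $j$. Hence
\[
|x-c|\le \frac{h\sqrt d}{2}=\frac r4<\frac r2,
\]
so $x\in B_{r/2}(c)$. Therefore $\mathcal B$ covers $\mathbb R^d$, and in particular covers $U$.

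\emph{Local finiteness.} Let $x\in\mathbb R^d$ be arbitrary and take the neighborhood $B_1(x)$. If $B_1(x)\cap B_{r/2}(c)\neq\emptyset$, then $|c-x|<1+r/2$. The set $h\mathbb Z^d\cap B_{1+r/2}(x)$ is finite, since it is a discrete lattice intersected with a bounded set. So only finitely many balls of $\mathcal B$ meet $B_1(x)$.

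Optionally, one may discard the balls that do not meet $U$. The remaining subfamily is still countable, still covers $U$, and is still locally finite, since local finiteness passes to subfamilies.

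There is no real obstacle here. The only point requiring care is that local finiteness must be checked at every point of $\mathbb R^d$. The lattice construction handles this uniformly, because the relevant count is bounded independently of $x$ by the number of lattice points in a ball of radius $1+r/2$.
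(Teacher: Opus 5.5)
Your lattice argument is correct if ``covers $U$'' only means $U\subseteq\bigcup\mathcal B$. Under that reading, however, the lemma is trivial, and it is not what the paper proves or uses. The paper's proof produces balls that are each contained in $U$ and whose union is exactly $U$. Proposition~\ref{Wf} needs precisely this: it writes $\mathbb R^n\setminus E=\bigcup_j B_{r_j}(c_j)$, and since $\psi_j>0$ on $B_{r_j}(c_j)$, both $\psi^{-1}(0)=E$ and the flatness of $\psi$ on $E$ require that no ball meets $E=U^c$. Your family covers all of $\mathbb R^d$, so the resulting $\psi$ would be strictly positive everywhere. Discarding the balls that miss $U$ does not repair this, because a ball $B_{r/2}(c)$ with $c$ close to $\partial U$ meets both $U$ and $U^c$.

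The missing idea is that the radii must in general shrink to $0$ toward $\partial U$. For example, if $E\subset\mathbb R$ is a Cantor set, then $U=\mathbb R\setminus E$ has gaps of arbitrarily small length $\ell$, and such a gap contains no ball of radius larger than $\ell/2$. So no fixed-radius lattice can work. The same example shows that local finiteness can only be required at points of $U$, not at every point of $\mathbb R^d$ as you insist: every neighborhood of a point of $E$ contains infinitely many gaps, hence meets infinitely many balls. That is the sense used in Proposition~\ref{Wf}, where smoothness off $E$ comes from local finiteness, and smoothness across $E$ comes separately from the decay of the constants $a_j$. The paper achieves this with the compact exhaustion $K_m=\{x\in U:\|x\|\le m,\ d(x,U^c)\ge 1/m\}$. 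Each compact shell $\overline{V_m}$ is covered by finitely many balls of radius less than $r$ lying in $\operatorname{int}(K_{m+2})\setminus K_{m-2}$. A point of $U$ then has a neighborhood $\operatorname{int}(K_M)$ that is missed by every ball from $\mathcal B_m$ with $m\ge M+2$. A Whitney-type decomposition would also work, as would a lattice whose mesh is adapted to $d(x,U^c)$ on each shell; a single uniform lattice cannot.
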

 
\begin{proof}
       For each integer $m \ge 1$, define the compact set
    $$ K_m = \{x \in U : \|x\| \le m \text{ and } d(x, U^c) \ge 1/m \},$$
    where  $d(x, A)$ denotes the distance from   $x$ to the set $A$.  Then   $K_m \subset \text{int}(K_{m+1})$ and $\bigcup_{m=1}^\infty K_m = U$. 
     Let $K_0 = \emptyset$ and set  
    $$ V_m = \text{int}(K_{m+1}) \setminus K_{m-1} \quad \text{for } m \ge 1. $$
    The collection of open sets $\{V_m\}_{m=1}^\infty$ is a locally finite  covering of $U$. 

    Now, since each $\overline{V_m}$ is compact,   $\overline{V_m}$ can be covered by finitely  many open balls of radius less  than $r$, each contained in $K_{m+2}\setminus \overline{K_{m-2}}$ (setting $K_{-1} = \emptyset$). Denote this finite collection     of balls by $\mathcal{B}_m$.
  Set  
    $$ \mathcal{B} = \bigcup_{m=1}^\infty \mathcal{B}_m.$$
    Then $  \mathcal{B} $ is a countable collection of open balls with radius less than $r$, and its  union is $U$. Indeed, every point of $U$ has a neighborhood intersecting only finitely many of the sets $V_m$, and hence only finitely many of the corresponding collections $\mathcal B_m$. Thus $\mathcal B$ is the desired locally finite covering of $U$.
\end{proof}

 The following is a well-known fact concerning  the composition of smooth non-decreasing convex functions with convex or plurisubharmonic functions.

\begin{lemma} \label{cp}
Let \(\Psi:\mathbb R^N\to\mathbb R\) be a smooth convex function that is nondecreasing in each variable. 
\begin{enumerate}
    \item If \(D\subset\mathbb R^d\) is a convex domain, and  \(u=(u_1,\ldots,u_N):D\to\mathbb R^N \) is smooth, with each \(u_j\) convex on $D$, then \(\Psi\circ u\) is convex on \(D\). 

    \item If \(\Omega\subset\mathbb C^d\) is a domain,  and \(u=(u_1,\ldots,u_N):\Omega\to\mathbb R^N \) is smooth, with each \(u_j\) plurisubharmonic on $\Omega$, then \(\Psi\circ u\) is plurisubharmonic on $\Omega$.
\end{enumerate}
  \end{lemma}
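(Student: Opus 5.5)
The plan is to compute the Hessian (real or complex) of $\Psi\circ u$ by the chain rule. Then show that each resulting piece is nonnegative: the convexity of $\Psi$ handles one piece, and the monotonicity of $\Psi$ together with the convexity or plurisubharmonicity of the $u_j$ handles the other.

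For part 1, fix $x\in D$ and a real vector $v\in\mathbb R^d$, and differentiate twice along $v$:
\begin{equation*}
v^T D^2(\Psi\circ u)(x)\, v=\sum_{j=1}^N \frac{\partial\Psi}{\partial s_j}(u(x))\, v^T D^2u_j(x)\, v+\sum_{j,k=1}^N \frac{\partial^2\Psi}{\partial s_j\partial s_k}(u(x))\,(\nabla u_j\cdot v)(\nabla u_k\cdot v).
\end{equation*}
\begin{itemize}
\item The first sum is nonnegative. Each $\partial\Psi/\partial s_j\ge 0$ because $\Psi$ is nondecreasing in each variable, and each $D^2u_j\ge 0$ because $u_j$ is convex.
\item The second sum is $W^TD^2\Psi(u(x))W$ with $W=(\nabla u_j\cdot v)_j\in\mathbb R^N$. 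It is nonnegative since $D^2\Psi\ge0$.
\end{itemize}
Hence $D^2(\Psi\circ u)\ge 0$ on $D$. Since $D$ is convex, this is exactly convexity in the sense adopted in Section \ref{2t}.

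For part 2, fix $z\in\Omega$ and $\xi\in\mathbb C^d$, and compute the complex Hessian via the chain rule for $\partial\bar\partial$. Because $\Psi$ is real and the $u_j$ are real-valued,
\begin{equation*}
\sum_{a,b}\frac{\partial^2(\Psi\circ u)}{\partial z_a\partial\bar z_b}\xi_a\bar\xi_b=\sum_j \Psi_{s_j}\,L_{u_j}(\xi,\bar\xi)+\sum_{j,k}\Psi_{s_js_k}\,\partial u_j(\xi)\,\overline{\partial u_k(\xi)},
\end{equation*}
where $\partial u_j(\xi)=\sum_a (u_j)_{z_a}\xi_a$. Here I use $\overline{(u_k)_{z_b}}=(u_k)_{\bar z_b}$, which holds since $u_k$ is real.
\begin{itemize}
\item The first sum is nonnegative by monotonicity of $\Psi$ and plurisubharmonicity of the $u_j$.
\item The second sum is $\eta^*D^2\Psi\,\eta$ with the complex vector $\eta=(\partial u_j(\xi))_j$. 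Since $D^2\Psi$ is real symmetric positive semidefinite, it is also positive semidefinite as a Hermitian form on $\mathbb C^N$: writing $\eta=a+ib$ gives $\eta^*D^2\Psi\,\eta=a^TD^2\Psi\, a+b^TD^2\Psi\, b\ge0$.
\end{itemize}
This is the same structure used in \eqref{lm}.

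The only point needing care is the complex chain rule identity in part 2 and the extension of real semidefiniteness to complex vectors; everything else is immediate. Finally, for the application in the paper, note that the regularized maximum $M_\epsilon$ of Lemma \ref{mm} satisfies both hypotheses on $\Psi$: it is convex by property 5 and nondecreasing in each variable by property 4.
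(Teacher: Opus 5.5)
Your proof is correct and follows the paper's argument. The paper also expands the real and complex Hessians of $\Psi\circ u$ by the chain rule, getting the first-derivative term nonnegative from monotonicity of $\Psi$ plus convexity or plurisubharmonicity of the $u_j$, and the second-derivative term nonnegative from $D^2\Psi\ge 0$; your justification via $\eta=a+ib$ that a real symmetric positive semidefinite matrix is also positive semidefinite as a Hermitian form fills in a step the paper leaves implicit.
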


\begin{proof} We first prove the convex case. Since \(\Psi\) is smooth and nondecreasing in each variable, we have \[ \Psi_j\ge 0,\qquad j=1,\ldots,N, \] where \(\Psi_j=\partial \Psi/\partial x_j\). The convexity of  \(\Psi\) further gives its real Hessian  \[ D^2\Psi\ge 0. \] Let \(U=\Psi\circ u\). By the chain rule, \[ D^2U = \sum_{j=1}^N \Psi_j(u)\,D^2u_j + \sum_{j,k=1}^N \Psi_{jk}(u)\,du_j\otimes du_k. \]
Thus, for every \({\xi}\in\mathbb R^d\), \[ D^2U({\xi},{\xi}) = \sum_{j=1}^N \Psi_j(u)\,D^2u_j({\xi},{\xi}) + \sum_{j,k=1}^N \Psi_{jk}(u)\,du_j({\xi})\,du_k({\xi}). \] The first sum is nonnegative because \(\Psi_j\ge 0\) and each \(u_j\) is convex. The second sum is nonnegative because \(D^2\Psi\ge 0\). Hence \(D^2U\ge 0\), and therefore \(U=\Psi\circ u\) is convex. 

The plurisubharmonic case is analogous. Let now \(\Omega\subset\mathbb C^d\), and assume that each \(u_j\) is smooth plurisubharmonic. For every \({\xi}\in\mathbb C^d\), the complex Hessian of \(U=\Psi\circ u\) is
\[ H_U({\xi, \overline{\xi}}) = \sum_{j=1}^N \Psi_j(u)\,H_{u_j}({\xi}) + \sum_{j,k=1}^N \Psi_{jk}(u)\,\partial u_j({\xi})\, \overline{\partial u_k({\xi})}. \] 
Since both terms are nonnegative by assumption, 
%each \(u_j\) is plurisubharmonic, its complex Hessian \(H_{u_j}({\xi})\ge 0\). Since \(\Psi\) is nondecreasing in each variable, \(\Psi_j\ge 0\). Hence the first sum is nonnegative. The second sum is also nonnegative because \(D^2\Psi\ge 0\). Therefore 
\[ H_U({\xi, \overline{\xi}})\ge 0 \] for every \({\xi}\in\mathbb C^d\). Thus \(U=\Psi\circ u\) is plurisubharmonic. \end{proof}

\section{Remarks on the conjectures}\label{sac}

We begin by proving a relationship between   the not-flatness of the Levi determinant and the surface measure of  the weakly pseudoconvex locus. In particular, this  shows that Conjecture 2 implies Conjecture 1 in the introduction.

\begin{proposition}\label{2to1}
Let $\Omega\subset\mathbb C^n$ be a smooth pseudoconvex domain. Assume that, at every weakly pseudoconvex boundary point, the Levi determinant does not vanish to infinite order as a smooth function on \(b\Omega\). Then the weakly pseudoconvex locus \(W\) of $\Omega$ has zero surface measure on \(b\Omega\).
\end{proposition}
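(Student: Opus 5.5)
Let $\Lambda:=\Lambda_\rho|_{b\Omega}$ denote the bordered Levi determinant, a smooth function on the smooth compact hypersurface $b\Omega$. By Lemma \ref{ld1}, $\Lambda$ differs from the Levi determinant only by the smooth positive factor $|\partial\rho|^2$. Hence it vanishes to finite order exactly where the Levi determinant does, and $W=\Lambda^{-1}(0)$. The plan is to show that the zero set of a smooth function that is not flat at any of its zeros is a null set. This is a purely local, real-variable statement, so I will work in local coordinate charts of $b\Omega$, which is a real $(2n-1)$-dimensional manifold. Surface measure is equivalent to Lebesgue measure in any chart, so it suffices to show that each point $p\in W$ has a neighborhood $V$ with $W\cap V$ Lebesgue-null in the chart. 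Then compactness of $b\Omega$, or simply second countability, finishes the argument.

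Fix $p\in W$. Since $\Lambda$ is not flat at $p$, there is a smallest integer $k\ge1$ and a multi-index $\gamma$ with $|\gamma|=k$ such that $D^\gamma\Lambda(p)\neq0$, while all derivatives of order $<k$ vanish at $p$. Choose such a $\gamma$ and write $\gamma=e_j+\gamma'$. Then the function $g:=D^{\gamma'}\Lambda$ satisfies $g(p)=0$ and $\partial_{x_j}g(p)\neq0$. By the implicit function theorem, there is a neighborhood $V$ of $p$ in which $\{g=0\}$ is a smooth embedded hypersurface of the chart, and hence has Lebesgue measure zero.

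The remaining issue, which I expect to be the main obstacle, is that $W\cap V$ need not lie in $\{g=0\}$, because the order $k$ can vary from point to point of $W$. I will handle this by stratifying. For each multi-index $\beta$, set
\[
Z_\beta=\{q\in V:\ D^{\beta}\Lambda(q)=0,\ \nabla D^{\beta}\Lambda(q)\neq0\}.
\]
Each $Z_\beta$ is contained in the regular zero set of $D^\beta\Lambda$. By the implicit function theorem, that set is locally an embedded hypersurface, and since it is second countable, it is a countable union of graph pieces, so it is null. Now take any $q\in W\cap V$. By hypothesis $\Lambda$ is not flat at $q$. Choose a multi-index $\beta$ of maximal length such that $D^\beta\Lambda(q)=0$ but $D^{\beta+e_i}\Lambda(q)\ne0$ for some $i$. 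This is possible: take a derivative of minimal order $\ell\ge1$ that is nonzero at $q$, and let $\beta$ be that multi-index with one index removed. Then $q\in Z_\beta$. Therefore
\[
W\cap V\subset\bigcup_{\beta}Z_\beta,
\]
which is a countable union of null sets. Hence $W\cap V$ is null. With this stratification, the first local step with fixed $p$ becomes unnecessary, so I would present only the stratified argument.

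Covering $b\Omega$ by countably many charts then shows that $W$ has zero surface measure. The only points to verify carefully are that measure zero is chart-independent, since diffeomorphisms preserve null sets, and that the vanishing order of $\Lambda$ along $b\Omega$ is intrinsic. The hypothesis is stated for the Levi determinant as a smooth function on $b\Omega$, and multiplying by the smooth nonvanishing factor $|\partial\rho|^{-2}$ preserves finite-order vanishing.
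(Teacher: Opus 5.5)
Your proof is correct, and it takes a different route from the paper's. The paper reduces to the same local real-variable statement (Lemma \ref{ele}) but proves it by contradiction using the Lebesgue density theorem. If $h^{-1}(0)$ had positive measure, one takes a density point $x_0$ in it and lets $P_k$ be the first nonzero homogeneous part of the Taylor expansion of $h$ at $x_0$. One then blows up: $h(x_0+ry)/r^k\to P_k(y)$ uniformly on $B_1(0)$. Since $h(x_0+r\,\cdot)$ vanishes on a subset of $B_1(0)$ whose complement has measure tending to $0$, one gets $\int_{B_1(0)}|P_k|=0$, a contradiction.

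You instead cover the zero set by the countably many regular zero sets of the derivatives $D^\beta\Lambda$, and you use only the implicit function theorem. The paper's argument is a single blow-up at one point and needs no selection of derivatives. Yours avoids density points altogether and proves more: $W$ lies in a countable union of smooth embedded hypersurfaces of $b\Omega$, so it has Hausdorff dimension at most $2n-2$, not merely surface measure zero.

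One wording fix: do not ask for $\beta$ of \emph{maximal} length, because such a $\beta$ need not exist. For $\sin x_1$ at the origin, every $\beta=2k\,e_1$ is admissible. The $\beta$ produced by your minimal-order construction already puts $q$ in $Z_\beta$. In fact any $\beta$ with $D^\beta\Lambda(q)=0$ and $\partial_{x_i}D^\beta\Lambda(q)\neq0$ does.
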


The proposition follows directly from the following elementary lemma, applied in local boundary coordinates to the Levi determinant $ \Lambda_\rho$.

\begin{lemma}\label{ele}
    Let $h\in C^\infty(V)$, where
$V\subset\mathbb R^d$ is open. If $h$ does not vanish to infinite order at any
point of $h^{-1}(0)$, then $h^{-1}(0)$ has Lebesgue measure zero in
$\mathbb R^d$.

\end{lemma}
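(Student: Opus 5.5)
We prove Lemma \ref{ele} by a standard stratification of the zero set according to the order of vanishing. For each $k\ge 0$, let $Z_k$ be the set of points $p\in h^{-1}(0)$ at which all partial derivatives of $h$ of order at most $k$ vanish, but some partial derivative of order $k+1$ does not. By hypothesis $h$ does not vanish to infinite order at any zero, so
\[
h^{-1}(0)=\bigcup_{k=0}^{\infty} Z_k .
\]
Since a countable union of null sets is null, it suffices to show that each $Z_k$ has Lebesgue measure zero.

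Fix $k$ and a point $p\in Z_k$. Choose a multi-index $\alpha$ with $|\alpha|=k$ and an index $j$ such that
\[
\partial_j D^\alpha h(p)\neq 0 .
\]
Set $g=D^\alpha h$. Then $g$ is smooth, $g$ vanishes on $Z_k$ (because all derivatives of order $k$ vanish there), and $\nabla g(p)\neq 0$. By the implicit function theorem, there is a neighborhood $V_p$ of $p$ on which $g^{-1}(0)$ is a smooth embedded hypersurface of dimension $d-1$. Hence $Z_k\cap V_p$ lies in a $C^\infty$ hypersurface, and therefore has Lebesgue measure zero.

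It remains to pass from local to global. Finitely many indices $(\alpha,j)$ occur, and for each such pair the set of points of $Z_k$ where $\partial_j D^\alpha h\neq0$ is covered by the open sets $V_p$. Since $\mathbb R^d$ is second countable, countably many of the $V_p$ suffice to cover $Z_k$ (Lindelöf). Thus $Z_k$ is a countable union of null sets and is null. (Alternatively, one can note that $Z_k\cap\{\partial_jD^\alpha h\ne0\}$ lies in the regular part of the level set $(D^\alpha h)^{-1}(0)$, which is a countable union of hypersurface patches.)

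For Proposition \ref{2to1}, take local boundary charts $\phi:V\to b\Omega$ with $V\subset\mathbb R^{2n-1}$ open, and apply the lemma to
\[
h=\Lambda_\rho\circ\phi .
\]
By Lemma \ref{ld1}, $\Lambda_\rho$ has the same zero set as the Levi determinant and differs from it by a positive smooth factor on $b\Omega$, so its vanishing orders agree with those of the Levi determinant. The hypothesis therefore gives non-flatness of $h$ at every zero. Surface measure is locally comparable to Lebesgue measure under smooth charts, so $W$ is null in each chart, and a countable atlas covers $b\Omega$.

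The only mildly delicate point is the bookkeeping in choosing, at each $p$, a derivative of order $k$ whose gradient is nonzero, together with the countable-cover step. Neither step presents a real obstacle.
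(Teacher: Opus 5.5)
Your proof is correct, but it takes a different route from the paper's.

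You stratify $h^{-1}(0)$ by vanishing order. You show that each stratum $Z_k$ lies, near each of its points, in a regular level set $\{D^\alpha h=0\}$ with $\nabla D^\alpha h\neq 0$. The implicit function theorem and a Lindel\"of countable-cover step then finish the argument.

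The paper instead argues by contradiction at a single point. If $h^{-1}(0)$ had positive measure, it would have a Lebesgue density point $x_0$. Blowing up at $x_0$ gives $r^{-k}h(x_0+ry)\to P_k(y)$ uniformly on $B_1(0)$, where $P_k\not\equiv 0$ is the leading homogeneous Taylor polynomial. Since $h(x_0+ry)=0$ on a subset of $B_1(0)$ whose complement has measure tending to $0$, one gets $\int_{B_1(0)}|P_k|=0$, which is a contradiction.

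The two approaches trade off as follows. The paper's argument avoids the implicit function theorem and the covering bookkeeping. It uses only the Taylor expansion at one density point and the fact that a nonzero polynomial cannot vanish almost everywhere on a ball. Your stratification gives a strictly stronger structural conclusion: $h^{-1}(0)$ is contained in a countable union of smooth embedded hypersurfaces. In particular it has Hausdorff dimension at most $d-1$, which the density argument does not provide.
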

\begin{proof}
    Suppose by contradiction that $E: =h^{-1}(0)$ has positive measure.
By the Lebesgue's density theorem, there exists a point \(x_0\in E\) which
is a density point of \(E\). %; that is,
%\begin{equation*}
%\lim_{r\to 0} \frac{|E\cap B_r(x_0)|}{|B_r(x_0)|} =1.
%\end{equation*}
Equivalently, 
\begin{equation*}
\lim_{r\to 0}
\frac{|B_r(x_0)\setminus E|}{|B_r(x_0)|}
=0.
\end{equation*}

Let $k$ be the  least integer  such that some derivative of order $k$ of $h$ is
nonzero at $x_0$. Thus the Taylor expansion of $h$ at $x_0$ has the form
\begin{equation*}
h(x_0+y)=P_k(y)+o(|y|^k),
\end{equation*}
where $P_k$ is a nonzero homogeneous polynomial of degree $k$. For $r>0$, set
\begin{equation*}
E_r=\{y\in B_1(0)):x_0+ry\in E\}.
\end{equation*}
Since $x_0$ is a density point of $E$, the measure   $$|B_1(0))\setminus E_r|\rightarrow 0$$  as $r\to 0$. On the other hand,
\begin{equation*}
\frac{h(x_0+ry)}{r^k}\longrightarrow P_k(y)
\end{equation*}
uniformly for $y\in B_1(0)$. Since $h(x_0+ry)=0$ on $E_r$, we obtain
\begin{align*}
\int_{B_1(0)} |P_k(y)|\,dy
&\le
\int_{E_r}
\left|
P_k(y)-\frac{h(x_0+ry)}{r^k}
\right|\,dy  +
\int_{B_1(0)\setminus E_r} |P_k(y)|\,dy .
\end{align*}
Letting $r\to 0$, the right-hand side tends to zero. Hence
$P_k=0$ almost everywhere on $B_1(0)$, and therefore $P_k\equiv 0$, a
contradiction. This proves the lemma.
\end{proof}

 For smooth pseudoconvex domains in  $\mathbb C^2$, Kohn \cite[Proposition 2.8]{Ko72} showed that    finite commutator type  is equivalent to  the nonflatness of the Levi determinant along tangential directions.  Since finite D'Angelo type and finite commutator type are equivalent in $\mathbb C^2$, Conjecture 2 in the introduction holds when $n=2$.  Consequently, Conjecture 1 also holds in this case by Proposition \ref{2to1}. For the reader's convenience, we include the proof below.
 
We first recall the definition of commutator type.  For a smooth real hypersurface \(M\subset \mathbb C^2\), let \(L\) be a local nonvanishing \((1,0)\)-vector field tangent to \(M\) near $p$. For \(k\ge 1\), let \(\mathcal L_k\) denote the collection of all iterated commutators of \(L\) and \(\overline L\) of length at most \(k\). The commutator type of \(M\) at \(p\) is the smallest integer \(k\) such that the values at \(p\) of the vector fields in \(\mathcal L_k\) span the complexified tangent space \(\mathbb C T_pM\). If no such \(k\) exists, the commutator type is defined to be infinite. 

 We shall also need the following Cartan's formula: if $\theta$ is s smooth one-form and $X, Y$ are smooth vector fields, then  \begin{equation} \label{Ca}\theta([X,Y]) = X(\theta(Y))-Y(\theta(X))-d\theta(X,Y). \end{equation}

\begin{proposition}\label{cn=2}
Let \(\Omega\subset\mathbb C^2\) be a smooth pseudoconvex domain of finite
D'Angelo type.  Then  the Levi
determinant, regarded as a smooth function on \(b\Omega\), does not vanish to
infinite order at any point of \(b\Omega\). In particular, the weakly pseudoconvex locus
\(W\) has zero surface measure on \(b\Omega\).   
\end{proposition}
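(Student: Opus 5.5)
The plan is to argue by contraposition. Suppose the Levi determinant $\Lambda_\rho$ vanishes to infinite order, as a function on $b\Omega$, at some point $p\in b\Omega$. We will show that the commutator type at $p$ is then infinite. Since in $\mathbb C^2$ finite D'Angelo type is equivalent to finite commutator type (the classical result recalled just before the statement), this contradicts the hypothesis. The ``in particular'' clause then follows at once from Proposition \ref{2to1}.

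\emph{Setting up the frame.} Near $p$, take the local $(1,0)$ tangent vector field
\[
L=\rho_{w}\frac{\partial}{\partial z}-\rho_{z}\frac{\partial}{\partial w},
\]
which is nonvanishing near $p$ because $\partial\rho\neq0$. Let $T$ be a real tangent field transverse to $H(b\Omega)$, so that $L,\overline L,T$ span $\mathbb CT(b\Omega)$. Take $\theta=\frac{1}{2}(\partial\rho-\bar\partial\rho)$, a purely imaginary contact form annihilating $L$ and $\overline L$, and normalize so that $\theta(T)=1$. By Cartan's formula \eqref{Ca},
\[
\theta([L,\overline L])=-d\theta(L,\overline L)=c\,\partial\bar\partial\rho(L,\overline L)
\]
for a nonzero constant $c$. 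For this choice of $L$, the quantity $\partial\bar\partial\rho(L,\overline L)$ is exactly $\Lambda_\rho$ up to sign; one checks this directly from the bordered determinant. Write $\lambda:=\theta([L,\overline L])$. Then $\lambda$ is a nonvanishing smooth multiple of $\Lambda_\rho$ on $b\Omega$, and it is flat at $p$ if and only if $\Lambda_\rho$ is.

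\emph{Induction.} Decompose every iterated commutator as
\[
X=a_XL+b_X\overline L+\theta(X)T .
\]
The claim is that for every commutator $X$ of length at least $2$, the coefficient $\theta(X)$ lies in the ideal $\mathcal I$ of smooth functions generated by $\lambda$ and all its derivatives along $L,\overline L,T$. The base case is immediate, since $\theta([L,\overline L])=\lambda$. For the inductive step, let $Y\in\{L,\overline L\}$ and apply Cartan's formula to $[Y,X]$:
\[
\theta([Y,X])=Y(\theta(X))-X(\theta(Y))-d\theta(Y,X).
\]
The middle term vanishes because $\theta(Y)=0$. Expanding $X$ in the frame, the last term becomes
\[
d\theta(Y,X)=a_X\,d\theta(Y,L)+b_X\,d\theta(Y,\overline L)+\theta(X)\,d\theta(Y,T).
\]
Here $d\theta(L,L)=0$ and $d\theta(L,\overline L)=\pm\lambda$ (up to the constant $c$), so the first two terms lie in $\mathcal I$. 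The third term lies in $\mathcal I$ by the inductive hypothesis, and so does $Y(\theta(X))$, since $\mathcal I$ is closed under differentiation along the frame.

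\emph{Conclusion.} Every function in $\mathcal I$ is built from derivatives of $\lambda$, all of which vanish at $p$ by flatness. Hence $\theta(X)(p)=0$ for every commutator $X$, so no element of any $\mathcal L_k$ has a $T$-component at $p$. The span at $p$ therefore stays inside $\mathrm{span}\{L,\overline L\}$, and the commutator type at $p$ is infinite.

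The main obstacle is the bookkeeping in the inductive step: the ideal $\mathcal I$ has to be set up so that it is visibly closed under differentiation by $L,\overline L,T$, and the identification of $\partial\bar\partial\rho(L,\overline L)$ with $\Lambda_\rho$ times a nonvanishing factor must be verified. The latter should follow from a direct $3\times3$ cofactor computation of $H^{\text{bd}}_\rho$ compared with $L$, in the same way as Lemma \ref{ld1}.
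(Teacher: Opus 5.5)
Your proposal is correct and follows the paper's proof essentially step for step. Both argue by contraposition via the equivalence of D'Angelo and commutator type in $\mathbb C^2$, apply Cartan's formula \eqref{Ca} to $\theta([Y,X])$, expand each commutator in the frame $L,\overline L,T$, and show by induction that the $T$-component lies among derivatives of the Levi form. Your ideal $\mathcal I$ is a cleaner packaging of the paper's ``tangential derivatives of order at most $k-2$,'' and your explicit $L=\rho_w\partial_z-\rho_z\partial_w$ does give $\partial\bar\partial\rho(L,\overline L)=\Lambda_\rho$ exactly. One cosmetic slip: your $\theta=\frac12(\partial\rho-\bar\partial\rho)$ is purely imaginary on real vectors, so you should normalize $\theta(T)=i$ (or allow $T$ complex); this changes nothing, since commutator type concerns $\mathbb CT_p(b\Omega)$.
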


\begin{proof}
The assertion is local on \(b\Omega\). Let \(p\in b\Omega\), and let \(\rho\) be a smooth local defining function
near \(p\).     Choose a local nonvanishing
\((1,0)\)-vector field \(L\) spanning \(T^{1,0}(b\Omega)\) near $p$.  The scalar Levi
form is then given by
\begin{equation*}
\lambda_\rho
=
\mathcal L_\rho(L,\overline L)
=
\sum_{j,k=1}^{2}
\rho_{z_j\overline z_k}L_j\overline{L_k}.
\end{equation*}
Since the Levi determinant is differed from $\lambda_\rho$  by a nonvanishing scalar factor, it suffices to show that $ \lambda_\rho $ does not vanish to infinite order at $p$ along the tangential directions $L$ and $\overline{L}$.

Let \(\theta = \frac{i}{2}(\bar\partial \rho -\partial \rho)\) be the real  contact form on \(b\Omega\). Then  \(\theta(L)=\theta(\overline L)=0\), and   \(\theta\) does not vanish on the remaining real   tangential vector field in  $  \mathbb CT(b\Omega)\big/(T^{1,0}(b\Omega)\oplus T^{0, 1}(b\Omega))$. 
\begin{comment}
    
For commutators of length two, Cartan's formula gives
\begin{equation*} d\theta(L,\overline L)=-\theta([L,\overline L]). \end{equation*} 
Since \(d\theta\) is a nonzero smooth multiple of the Levi form, we obtain \begin{equation*} \theta([L,\overline L])=a\,\lambda_\rho \end{equation*} for some smooth nonvanishing function \(a\). Thus the component of \([L,\overline L]\) in the missing real tangential direction $T$ is controlled by \(\lambda_\rho\). 
\end{comment}
  Let $C_k$ be an iterated commutator of $L$ and $\overline L$ of length $k$. We claim that
$ 
\theta(C_k)
$ 
is a finite sum of tangential derivatives of $\lambda_\rho$ of order at most $k-2$, with smooth coefficients.
For $k=2$, we have
$$
C_2=[L,\overline L].
$$
By Cartan's formula \eqref{Ca},
$$
\theta([L,\overline L])
=
-d\theta(L,\overline L).
$$
Since $d\theta(L,\overline L)$ is a nonzero smooth multiple of the Levi form, there exists a smooth nonvanishing function $a$ such that
$$
\theta(C_2)= a\lambda_\rho.
$$
Thus the claim holds for commutators of length $2$.

Assume that the claim holds for all commutators of length $k$. Let $X$ be either $L$ or $\overline L$, and consider
$$
C_{k+1}=[X,C_k].
$$
Again by Cartan's formula and the identity $\theta(X)=0$, we have
$$
\theta(C_{k+1})=\theta([X,C_k])
= X(\theta(C_k))
-d\theta(X,C_k).
$$
Choose a real tangential vector field $T$ such that $\theta(T)=1$. Then $C_k$ can be written as $$ C_k = \theta(C_k)T + c_1L + c_2\overline L, $$ where $c_1$ and $c_2$ are smooth functions. Since $d\theta$ is tensorial, we further obtain
 $$
\theta(C_{k+1}) 
 = X(\theta(C_k)) - \theta(C_k) d\theta(X, T) - c_1  d\theta (X, L) - c_2 d\theta (X, \overline{L}).
$$
 By the induction hypothesis, $\theta(C_k)$ involves tangential derivatives of $\lambda_\rho$ of order at most $k-2$. Hence $X(\theta(C_k))$ involves tangential derivatives of $\lambda_\rho$ of order at most $k-1$. The term $\theta(C_k)d\theta(X,T)$ involves no derivatives beyond those already present in $\theta(C_k)$, and therefore has order at most $k-2$ in $\lambda_\rho$. Finally, the terms $d\theta(X,L)$ and $d\theta(X,\overline L)$ are either zero or smooth multiples of the Levi form, and hence contain only the factor $\lambda_\rho$ itself. %Thus these last two terms do not introduce any higher-order derivatives of $\lambda_\rho$. 
 Altogether, $\theta(C_{k+1})$ involves tangential derivatives of $\lambda_\rho$ of order at most $k-1$, as desired. This proves  the claim.

Consequently, if $\lambda_\rho$ vanishes to infinite order at $p$ along $b\Omega$, 
then every finite iterated commutator of $L$ and $\overline L$ would have zero component in the missing real tangential direction. Hence the Lie algebra generated by $L$ and $\overline L$ would fail, at every finite order, to generate the full complexified tangent space
$ 
\mathbb C T_p(b\Omega).
$ 
Thus the commutator type at $p$ would be infinite.
 However, since  finite D'Angelo type is equivalent to finite commutator type in \(\mathbb C^2\), this   contradicts the assumption that $p$ is of finite D'Angelo type. Therefore \(\lambda_\rho\) cannot vanish to infinite order at \(p\).  
 \end{proof}

\fontsize{11}{11}\selectfont

\vspace{0.7cm}

\noindent martino.fassina@gmail.com.\\
 
\noindent pan1@pfw.edu,

\vspace{0.1 cm}

\noindent Department of Mathematical Sciences, Purdue University Fort Wayne, Fort Wayne, IN 46805-1499, USA.\\

%\vspace{0.1cm}

\noindent zhan1313@pfw.edu,

\vspace{0.1 cm}

\noindent Department of Mathematical Sciences, Purdue University Fort Wayne, Fort Wayne, IN 46805-1499, USA.\\

\end{document}